\newtheorem{thm}{Theorem}[section]
\newtheorem*{thm*}{Theorem}
\newtheorem{cor}[thm]{Corollary}
\newtheorem{lem}[thm]{Lemma}
\newtheorem{prop}[thm]{Proposition}
\newtheorem*{prop*}{Proposition}
\newtheorem*{conj*}{Conjecture}
\newtheorem*{dfn*}{Definition}
\theoremstyle{definition}
\newtheorem{rem}[thm]{\textbf{Remark}}
\newtheorem*{rmk*}{Remark}
\newtheorem*{fact*}{Fact}
\theoremstyle{proof}
\newcommand{\abs}[1]{\left\vert#1\right\vert}
\newcommand{\set}[1]{\left\{#1\right\}}
\newcommand{\brac}[1]{\left(#1\right)}
\newcommand{\scalar}[1]{\left \langle #1 \right \rangle}
\newcommand{\Real}{\mathbb{R}}
\newcommand{\eps}{\varepsilon}
\newcommand{\I}{\mathcal{I}}
\renewcommand{\H}{\mathcal{H}}
\newcommand{\J}{\mathcal{I}^\flat}
\newcommand{\g}{p}
\newlength{\defbaselineskip}
\newcommand{\setlinespacing}[1]           {\setlength{\baselineskip}{#1 \defbaselineskip}}
\numberwithin{equation}{section}
\begin{document}

\title{Sharp Isoperimetric Inequalities and Model Spaces for Curvature-Dimension-Diameter Condition}
\author{Emanuel Milman\textsuperscript{1}}
\date{}

\footnotetext[1]{Department of Mathematics,
Technion - Israel Institute of Technology, Haifa 32000, Israel. Supported by ISF, GIF and the Taub Foundation (Landau Fellow).
Email: emilman@tx.technion.ac.il.\\
2000 Mathematics Subject Classification: 32F32, 53C21, 53C20.}

\maketitle

\begin{abstract}
We obtain new sharp isoperimetric inequalities on a Riemannian manifold equipped with a probability measure, whose generalized Ricci curvature is bounded from below (possibly negatively), and generalized dimension and diameter of the convex support are bounded from above (possibly infinitely). Our inequalities are \emph{sharp} for sets of any given measure and with respect to all parameters (curvature, dimension and diameter). Moreover, for each choice of parameters, we identify the \emph{model spaces} which are extremal for the isoperimetric problem. In particular, we recover the Gromov--L\'evy and Bakry--Ledoux isoperimetric inequalities, which state that whenever the curvature is strictly \emph{positively} bounded from below, these model spaces are the $n$-sphere and Gauss space, corresponding to generalized dimension being $n$ and $\infty$, respectively. In all other cases, which seem new even for the classical Riemannian-volume measure, it turns out that there is no \emph{single} model space to compare to, and that a simultaneous comparison to a natural \emph{one parameter family} of model spaces is required, nevertheless yielding a sharp result.
\end{abstract}

\section{Introduction}

Let $(M^n,g)$ denote an $n$-dimensional ($n \geq 2$) complete oriented smooth Riemannian manifold, and let $\mu$ denote a probability measure on $M$ having density $\Psi$ with respect to the Riemannian volume form $vol_g$.

\begin{dfn*}[Generalized Ricci Tensor] Given $q \in [0,\infty]$ and assuming that $\Psi > 0$ and $\log(\Psi) \in C^2$, we denote by $Ric_{g,\Psi,q}$ the following generalized Ricci tensor:
\begin{eqnarray}
\label{eq:Ric-tensor1}
Ric_{g,\Psi,q} & := & Ric_g - \nabla^2_g \log(\Psi) - \frac{1}{q} \nabla_g \log(\Psi) \otimes \nabla_g \log(\Psi) \\
\label{eq:Ric-tensor2} & = & Ric_g - q \frac{\nabla^2_g \Psi^{1/q}}{\Psi^{1/q}} ~.
\end{eqnarray}
When $q=\infty$, the last term in (\ref{eq:Ric-tensor1}) is interpreted as $0$, whereas when $q=0$, this term only makes sense if $\Psi$ is constant, in which case $Ric_{g,\Psi,0} := Ric_g$. Here as usual $Ric_g$ denotes the Ricci curvature tensor and $\nabla_g$ denotes the Levi-Civita covariant derivative.
\end{dfn*}

\begin{dfn*}[Curvature-Dimension-Diameter Condition]
$(M^n,g,\mu)$ is said to satisfy the Curvature-Dimension-Diameter Condition $CDD(\rho,n+q,D)$ ($\rho \in \Real$, $q \in [0,\infty]$, $D \in (0,\infty]$), if $\mu$ is supported on the closure of a geodesically convex domain $\Omega \subset M$ of diameter at most $D$, having (possibly empty) $C^2$ boundary, $\mu = \Psi \cdot vol_g|_\Omega$ with $\Psi > 0$ on $\overline{\Omega}$ and $\log(\Psi) \in C^2(\overline{\Omega})$, and as $2$-tensor fields:
\[
Ric_{g,\Psi,q} \geq \rho g \; \text{ on } \Omega ~.
\]
\end{dfn*}

When $\Omega = M$ and $D = +\infty$, the latter definition coincides with the celebrated Bakry--\'Emery Curvature-Dimension condition $CD(\rho,n+q)$, introduced in an equivalent form in \cite{BakryEmery} (in the more abstract framework of diffusion generators). Indeed, the generalized Ricci tensor incorporates information on curvature and dimension from both the geometry of $(M,g)$ and the measure $\mu$, and so $\rho$ may be thought of as a generalized-curvature lower bound, and $n+q$ as a generalized-dimension upper bound. The generalized Ricci tensor (\ref{eq:Ric-tensor1}) was introduced with $q=\infty$ in \cite{Lichnerowicz1970GenRicciTensorCRAS,Lichnerowicz1970GenRicciTensor} and in general in \cite{BakryStFlour} (the equivalent form (\ref{eq:Ric-tensor2}) was noted in \cite{LottRicciTensorProperties}), and has been extensively studied and used in recent years (see e.g. also \cite{QianWeightedVolumeThms,LedouxLectureNotesOnDiffusion,VonRenesseSturmRicciChar,PerelmanEntropyFormulaForRicciFlow,BakryQianGenRicComparisonThms,SturmCD12,
LottVillaniGeneralizedRicci,WeiWylie-GenRicciTensor,MorganBook4Ed} and the references therein).

\medskip

In this work, we obtain a \emph{sharp} isoperimetric inequality on $(M^n,g,\mu)$ under the $CDD(\rho,n+q,D)$ condition, for the \emph{entire} range of parameters $\rho \in \Real$, $q \in [0,\infty]$, $D \in (0,\infty]$, in a single unified framework. In particular, for each choice of parameters, we identify the \emph{model spaces} which are extremal for the isoperimetric problem. Our results seem new even in the classical constant-density case ($q=0$) when $\rho \leq 0$ and $D < \infty$ or when $\rho > 0$ and $D < \pi \sqrt{(n-1) / \rho}$.
We start by recalling the notion of an isoperimetric inequality in a general measure-metric space setting and some previously known results.

\subsection{Isoperimetric Inequalities}

Let $(\Omega,d)$ denote a separable metric space, and let $\mu$ denote a Borel probability measure on $(\Omega,d)$. The Minkowski (exterior) boundary measure $\mu^+(A)$ of a Borel set $A \subset \Omega$ is defined as $\mu^+(A) := \liminf_{\eps \to 0} \frac{\mu(A^d_{\eps}) -
\mu(A)}{\eps}$, where $A_{\eps}=A^d_{\eps} := \set{x \in \Omega ; \exists y
\in A \;\; d(x,y) < \eps}$ denotes the $\eps$ extension of $A$ with
respect to the metric $d$. The isoperimetric profile $\I =
\I(\Omega,d,\mu)$ is defined as the pointwise maximal function $\I
: [0,1] \rightarrow \Real_+ \cup \set{+\infty}$, so that $\mu^+(A) \geq \I(\mu(A))$, for
all Borel sets $A \subset \Omega$. An isoperimetric inequality measures the relation between the boundary measure and the measure of a set, by providing a lower bound on $\I(\Omega,d,\mu)$ by some (non-trivial) function $I: [0,1] \rightarrow \Real_+$. In our manifold-with-density setting, we will always assume that the metric $d$ is given by the induced geodesic distance on $(M,g)$, and write $\I = \I(M,g,\mu)$.

When $(\Omega,d) = (\Real,|\cdot|)$, we also define $\J  = \J(\Real,\abs{\cdot},\mu)$ as the pointwise maximal function $\J : [0,1] \rightarrow \Real_+ \cup \set{+\infty}$, so that $\mu^+(A) \geq \J(\mu(A))$ for all half lines $A = (-\infty,a)$ and $A = (a,\infty)$ (the difference with the function $\I$ being that the latter is tested on arbitrary Borel sets $A$). Obviously $\J \geq \I$, and a result of S. Bobkov \cite[Proposition 2.1]{BobkovExtremalHalfSpaces} asserts that $\J = \I$ when $\mu = f(x) dx$ and $f$ is \emph{log-concave}, meaning that $- \log(f) : \Real \rightarrow \Real \cup \set{+\infty}$ is convex.

\medskip

When $\rho > 0$, sharp isoperimetric inequalities under the $CD(\rho,n+q)$ condition are known and well understood, thanks to the existence of comparison model spaces on which equality is attained.
The first such result was obtained by M. Gromov in \cite{GromovGeneralizationOfLevy} (reprinted in \cite[Appendix C]{Gromov}), extending P. L\'evy's isoperimetric inequality on the sphere \cite{LevyIsopInqOnSphere,SchmidtIsopOnModelSpaces}, in the constant density case ($q=0$). Setting $\mu_g = vol_g / vol_g(M)$, the Gromov--L\'evy isoperimetric inequality states that if $Ric_g \geq \rho g$ with $\rho > 0$, then $\I(M,g,\mu_g) \geq \J(\Real,\abs{\cdot},\mu_{n,\rho})$, where $\mu_{n,\rho}$ denotes the probability measure supported on $[0,\pi \sqrt{(n-1)/\rho}]$ with density proportional to $\sin(\sqrt{\rho/(n-1)}t)^{n-1}$. In particular, by testing geodesic balls on $(S^n,g^\rho_{can})$, the $n$-dimensional sphere with Ricci curvature equal to $\rho$,
it follows that $\I(S^n,g^\rho_{can},\mu_{g^\rho_{can}}) = \J(\Real,\abs{\cdot},\mu_{n,\rho})$, recovering the classical isoperimetric inequality on the sphere. The case when $q=+\infty$ was treated by Bakry and Ledoux \cite{BakryLedoux} (see also Morgan \cite{MorganManifoldsWithDensity} for a geometric derivation), who showed that if $(M,g,\mu)$ satisfies the $CD(\rho,\infty)$ condition with $\rho > 0$, then $\I(M,g,\mu) \geq \J(\Real,\abs{\cdot},\gamma^\rho_1)$, where $\gamma^\rho_k$ denotes the standard Gaussian density on $\Real^k$ with covariance matrix $\rho^{-1} Id$, and $\abs{\cdot}$ denotes the standard Euclidean metric on $\Real^k$. In particular, this recovers the isoperimetric inequality of Sudakov and Tsirelson \cite{SudakovTsirelson} and independently Borell \cite{Borell-GaussianIsoperimetry}, stating that $\I(\Real^n,\abs{\cdot},\gamma^\rho_n) = \J(\Real,\abs{\cdot},\gamma^\rho_1)$. An extension of these results to $q \in (0,\infty)$ when $\rho > 0$ was subsequently obtained by Bayle in \cite[Appendix E]{BayleThesis}. 

When $\rho \leq 0$, the situation is very different, and without requiring some \emph{additional} information on the space $(M,g,\mu)$, no isoperimetric inequality can be deduced under the $CD(\rho,n+q)$ condition (in the sense that $\I(M,g,\mu)$ can be arbitrarily small). Various types of information have been considered in the literature. In \cite{BuserReverseCheeger}, Buser considered the existence of a spectral-gap in the constant-density ($q=0$) case; this was later extended to the $q=\infty$ case by Ledoux \cite{LedouxSpectralGapAndGeometry}, and generalized to other Sobolev type inequalities (e.g. \cite{BakryLedoux,LedouxSpectralGapAndGeometry,EMilmanRoleOfConvexityInFunctionalInqs}). Various authors (e.g. \cite{WangImprovedIntegrabilityForLogSob,BobkovGaussianIsoLogSobEquivalent,
BartheIntegrabilityImpliesIsoperimetryLikeBobkov,BartheKolesnikov,EMilmanGeometricApproachPartI}) considered an integrability condition of the form $\int_M \exp(\beta(d(x,x_0))) d\mu(x) < \infty$ for some (any) fixed $x_0 \in M$.
In \cite{EMilman-RoleOfConvexity,EMilmanGeometricApproachPartI}, we considered concentration inequalities, and showed that under the $CD(\rho,\infty)$ condition, these imply isoperimetric inequalities which are essentially best possible, up to dimension independent constants. But perhaps the most classical assumption from the view point of Riemannian Geometry is an upper bound on the diameter, which is a particular case of the integrability and concentration assumptions mentioned above. By considering domains $\Omega$ with bottlenecks, it is immediate to see that again no isoperimetric inequality can be deduced in general, and so requiring that $\Omega$ be geodesically convex (see Section \ref{sec:pre} for a precise definition) is a natural assumption; furthermore, this amounts to the natural requirement that the metric space $(\Omega,d)$, where $d$ is the induced geodesic distance on $(M,g)$, be a geodesic space. We thus arrive at the $CDD(\rho,n+q,D)$ condition.

Various isoperimetric inequalities assuming $CDD(\rho,n+q,D)$ with $\Omega = M$ and $D < \infty$ have been obtain for the classical constant density case $q=0$ in \cite{CrokeIsoperimetricBounds,BBGimprovingGromov,GallotIsoperimetricInqs}. In particular, when $\rho > 0$ and $D < \pi \sqrt{(n-1) / \rho}$, Croke \cite{CrokeEigenvaluePinching} and B\'erard--Besson--Gallot \cite{BBGimprovingGromov} obtained improvements over the Gromov--L\'evy inequality. Some of these results were extended to $q > 0$ by Bayle in \cite{BayleThesis}.

\medskip

However, with the exception of the known results under the $CD(\rho,n+q)$ condition when $\rho > 0$ (and $D=\infty$), none of the above mentioned results yield \emph{sharp} isoperimetric inequalities for all $v\in(0,1)$. Moreover, most known results fail to capture the behavior of $\I(v)$ for $v \in (0,1/2]$ close to and away from $0$ simultaneously, and miss the optimal inequality by dimension dependent factors. The difficulty when $\rho \leq 0$ lies in that there does not seem to be a \emph{good} model space to compare to, as in the Gromov--L\'evy or Bakry--Ledoux results. The purpose of this work is to fill this gap, providing a sharp isoperimetric inequality under the $CDD(\rho,n+q,D)$ condition in the entire range $\rho \in \Real$, $q \in [0,\infty]$, $D \in (0,\infty]$ and $v \in (0,1)$.

\subsection{Results}

Given $\delta \in \Real$, set as usual:
\[
\begin{array}{ccc}
 s_\delta(t) := \begin{cases}
\sin(\sqrt{\delta} t)/\sqrt{\delta} & \delta > 0 \\
t & \delta = 0 \\
\sinh(\sqrt{-\delta} t)/\sqrt{-\delta} & \delta < 0
\end{cases}

& , &

 c_\delta(t) := \begin{cases}
\cos(\sqrt{\delta} t) & \delta > 0 \\
1 & \delta = 0 \\
\cosh(\sqrt{-\delta} t) & \delta < 0
\end{cases}
\end{array} ~.
\]

Given a continuous function $f : \Real \rightarrow \Real$ with $f(0) \geq 0$, we denote by $f_+ : \Real \rightarrow \Real_+$ the function coinciding with $f$ between its first non-positive and first positive roots, and vanishing everywhere else, i.e. $f_+ := f 1_{[\xi_{-},\xi_{+}]}$ with $\xi_{-} = \sup\set{\xi \leq 0; f(\xi) = 0}$ and $\xi_{+} = \inf\set{\xi > 0; f(\xi) = 0}$.

\begin{dfn*}
Given $H,\rho \in \Real$ and $m \in [0,\infty]$, set $\delta := \rho / m$ if $m > 0$ and define the following (Jacobian) function of $t \in \Real$:
\[
J_{H,\rho,m}(t) :=
\begin{cases}
1_{\set{t=0}}  & m = 0 , \rho > 0 \\
1_{\set{H t \geq 0}} & m = 0 , \rho \leq 0 \\
\brac{c_\delta(t) + \frac{H}{m} s_\delta(t)}_+^{m} & m \in (0,\infty) \\
\exp(H t - \frac{\rho}{2} t^2) & m = \infty
\end{cases} ~.
\]
\end{dfn*}

\begin{rem} \label{rem:J-char}
Observe that since $c_\delta(t) = 1 - \frac{\delta}{2} t^2 + o(\delta)$ and $s_\delta(t) = t + o(\delta)$ as $\delta \rightarrow 0$, it follows that $\lim_{m \rightarrow \infty} J_{H,\rho,m} =  J_{H,\rho,\infty}$. A direct calculation also verifies that $\lim_{m \rightarrow 0+} J_{H,\rho,m} =  J_{H,\rho,0}$. Also observe that when $m > 0$ (and with the usual interpretation when $m=\infty$), $J_{H,\rho,m}$ coincides with the solution $J$ to the following second order ODE, on the maximal interval containing the origin where such a solution exists:
\[
- (\log J)''  - \frac{1}{m} ((\log J)')^2 = - m \frac{(J^{1/m})''}{J^{1/m}} = \rho ~,~ J(0) = 1 ~,~ J'(0) = H ~.
\]
The connection to (\ref{eq:Ric-tensor1}) and (\ref{eq:Ric-tensor2}) is evident.
\end{rem}

Lastly, given a non-negative integrable function $f$ on a closed interval $L \subset \Real$, we denote for short $\I(f,L) := \I(\Real,\abs{\cdot},\mu_{f,L})$, where $\mu_{f,L}$ is the probability measure supported in $L$ with density proportional to $f$ there. Similarly, we set $\J(f,L) := \J(\Real,\abs{\cdot},\mu_{f,L})$. When $\int_L f(x) dx = 0$ we set $\J(f,L) = \I(f,L) \equiv +\infty$, and when $\int_L f(x) dx = +\infty$ we set $\J(f,L) = \I(f,L) \equiv 0$.

\begin{thm} \label{thm:main1}
Let $(M^n,g,\mu)$ satisfy the $CDD(\rho,n+q,D)$ condition with $\rho \in \Real$, $q \in [0,\infty]$ and $D \in (0,+\infty]$. Then:
\begin{equation} \label{eq:main1}
 \I(M,g,\mu) \geq \inf_{H \in \Real, a \in [D-D,D]} \J\brac{ J_{H,\rho,n+q-1}, [-a,D-a] } ~,
\end{equation}
where the infimum is interpreted pointwise on $[0,1]$.
\end{thm}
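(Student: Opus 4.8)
The strategy is to reduce the multi-dimensional isoperimetric problem on $(M^n,g,\mu)$ to a one-dimensional model problem on an interval, via a localization (needle decomposition) argument, and then to analyze the resulting one-dimensional weighted densities using the $CDD$ hypothesis.

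First I would recall the localization paradigm: given a Borel set $A \subset \Omega$ with $\mu(A) = v$ and an $\eps$-neighborhood $A_\eps$, one considers the function $f = 1_A - v$, which has zero $\mu$-mean, and applies the localization theorem (in the spirit of Payne--Weinberger, Gromov--Milman, Kannan--Lov\'asz--Simonovits, and in the Riemannian setting as developed by Klartag and, in the relevant form here, by the present work's companion results) to decompose $\Omega$ into geodesic ``needles'' — minimizing geodesics $\Gamma$ carrying conditional probability measures $\mu_\Gamma$ — such that $\int f \, d\mu_\Gamma = 0$ on almost every needle, and such that the isoperimetric estimate on each needle implies the global one. The key geometric point is that along each geodesic needle, the conditional density, written with respect to arclength on an interval $L = [-a, D-a] \subset \Real$ (the parametrization reflecting that the needle lies in $\Omega$ of diameter $\leq D$), satisfies a curvature-dimension condition inherited from $CDD(\rho, n+q, D)$: precisely, if $h$ denotes the conditional density on the needle, then $-(\log h)'' - \frac{1}{n+q-1}((\log h)')^2 \geq \rho$, i.e. $-(n+q-1)\frac{(h^{1/(n+q-1)})''}{h^{1/(n+q-1)}} \geq \rho$, which is the one-dimensional form of the generalized Ricci bound with the dimension parameter reduced by one (the ``missing'' dimension being absorbed by the Jacobian of the normal exponential map along the needle).

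Next I would invoke the comparison for one-dimensional densities: on the interval $L$, any density $h$ satisfying the above differential inequality with $h(t_0) > 0$ at some interior point is, after normalization at that point, pointwise comparable to $J_{H,\rho,n+q-1}$ for the appropriate choice of $H = (\log h)'(t_0)$ (and after a translation fixing $t_0$ as the origin), in the sense that guarantees $\I(h, L) \geq \I(J_{H,\rho,n+q-1}, [-a, D-a])$ for a suitable $a$; this uses Remark \ref{rem:J-char} identifying $J_{H,\rho,m}$ as the extremal solution of exactly this ODE, together with a standard Sturm-type comparison argument showing that among all densities satisfying the differential inequality on an interval of length $\leq D$, the model density $J_{H,\rho,n+q-1}$ restricted to such an interval minimizes the isoperimetric profile. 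Since these model densities are log-concave precisely when... in general they need not be, so one keeps $\J$ rather than $\I$ on the model side, using Bobkov's $\J \geq \I$ and the fact that on a needle the extremal sets for the conditional isoperimetric problem are half-intervals (which is why $\J$, tested on half-lines, is the right quantity to appear on the right-hand side). Taking the infimum over the free parameters $H \in \Real$ and $a \in [0, D]$ (equivalently $a \in [D - D, D]$), which account for the unknown value of $(\log h)'$ and the unknown position of the needle's support within an interval of length $D$, yields the claimed bound \eqref{eq:main1}.

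\textbf{Main obstacle.} The principal difficulty is establishing that the needle decomposition is valid in this Riemannian-with-density setting and, crucially, that the conditional densities on the needles inherit the sharp curvature-dimension bound with the dimension reduced by exactly one — this requires careful control of the second fundamental form / Jacobian along the transport geodesics and is where the geometry of $CDD$ (geodesic convexity of $\Omega$, the $C^2$ boundary hypothesis, and the precise form of $Ric_{g,\Psi,q}$) is used in an essential way. A secondary technical point is handling the boundary cases $q = 0$ (constant density, where $n + q - 1 = n-1$ and the model reduces to the classical spherical/hyperbolic/flat Jacobians), $q = \infty$ (where $J_{H,\rho,\infty}$ is the Gaussian-type density), and $D = \infty$ (where the interval degenerates to a half-line or the whole line), so that the single formula \eqref{eq:main1} genuinely covers the entire parameter range; the limiting relations in Remark \ref{rem:J-char} are what make these cases fit together continuously.
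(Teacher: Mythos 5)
Your proposal takes a genuinely different route from the paper. The paper proves Theorem \ref{thm:main1} via Geometric Measure Theory: given an isoperimetric minimizer $A$, the regularity theory of Theorem \ref{thm:regularity} provides a regular hypersurface $\partial_r A\cap\Omega$ of constant generalized mean curvature $H_\mu(A)$; Gromov's closest-point observation (Proposition \ref{prop:closest-point}) shows that the normal exponential map from $\partial_r A\cap\Omega$ sweeps out all of $A$ and $\Omega\setminus\overline{A}$; and the generalized Heintze--Karcher theorem (Theorem \ref{thm:gen-HK}) then yields Theorem \ref{thm:main-tool}, i.e.\ the pair of one-dimensional bounds $v\le\mu^+(A)\int_0^{r_A}J_{-H_\mu(A),\rho,n+q-1}\,dt$ and $1-v\le\mu^+(A)\int_0^{r_{A^c}}J_{H_\mu(A),\rho,n+q-1}\,dt$ with $r_A+r_{A^c}\le D$, from which the stated infimum follows by an elementary optimization over $a$ (or $H$). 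Your route via needle decomposition bypasses both the regularity theory and the closest-point lemma. The paper itself discusses exactly this alternative in the concluding remarks on alternative approaches, and states there that the localization technique of Payne--Weinberger, Gromov--Milman and KLS, while applicable in the Euclidean setting, did not (at the time of writing) extend to the full Riemannian setting; the Riemannian needle decomposition of Klartag that you invoke post-dates this paper, so the GMT route was then the only one available for $CDD$ on a general manifold. Granting Klartag's decomposition as input, your argument is sound and arguably shorter, trading the delicate minimizer-regularity machinery for the localization theorem and the one-dimensional comparison (essentially Corollary \ref{cor:main1-1D}). You do leave one step implicit that must be spelled out: the transfer from the per-needle isoperimetric inequality back to the global Minkowski content $\mu^+(A)$. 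This requires noting that for each needle $\Gamma$ the global $\eps$-extension $A_\eps$ restricted to $\Gamma$ contains the intrinsic $\eps$-extension of $A\cap\Gamma$ in $\Gamma$, so that Fatou's lemma applied to the disintegration gives $\mu^+(A)\ge\int\mu_\Gamma^+(A\cap\Gamma)\,d\nu(\Gamma)$, and then the fiberwise comparison applies since $\mu_\Gamma(A)=v$ for $\nu$-a.e.\ $\Gamma$. Without this the per-needle bound does not automatically yield the global one; with it your argument closes.
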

\begin{rem}
We employ throughout the convention $\infty-\infty =  -\infty+\infty=\infty$ and $[-\infty,\infty] = \Real$.
\end{rem}

In fact, the $\J$ above may be replaced by $\I$, leading to the same lower bound (see Corollary \ref{cor:replace-J-with-I}), and the infimum above is actually always attained (see Corollary \ref{cor:inf-attained}). The bound (\ref{eq:main1}) was deliberately formulated to cover the entire range of values for $\rho$, $n$, $q$ and $D$ simultaneously, indicating its universal character, but it may be easily simplified as follows (the elementary proof is deferred to Section \ref{sec:model}):

\begin{cor} \label{cor:main1}
Under the same assumptions and notation as in Theorem \ref{thm:main1}, and setting $\delta:= \frac{\rho}{n+q-1}$, we have:
\begin{description}
 \item[Case 1 - $q < \infty$, $\rho > 0$, $D<\pi / \sqrt{\delta}$:]
\[
\I(M^n,g,\mu) \geq \inf_{\xi \in [0,\pi/\sqrt{\delta}-D]} \J\brac{\sin(\sqrt{\delta} t)^{n+q-1}, [\xi,\xi +D] } ~.
\]
\item[Case 2 - $q < \infty$, $\rho > 0$, $D \geq \pi / \sqrt{\delta}$:]
\[
\I(M^n,g,\mu) \geq \J\brac{ \sin(\sqrt{\delta} t)^{n+q-1} , [0,\pi/\sqrt{\delta}] } ~.
\]
\item[Case 3 - $q < \infty$, $\rho = 0$, $D<\infty$:]
\begin{eqnarray*}
\!\!\!\!\!\!\!\!\!\!\! \I(M^n,g,\mu)(v) &\geq& 
\min \left \{ \begin{array}{l}  \inf_{\xi \geq 0} \J( t^{n+q-1} , [\xi,\xi+D] )(v) ~,\\
\phantom{\inf_{\xi \in \Real}} \J(1,[0,D])(v)
\end{array}
\right \} \\
 & = & \frac{n+q}{D} \inf_{\xi \geq 0}  \frac{\brac{\min(v,1-v) (\xi+1)^{n+q} + \max(v,1-v) \xi^{n+q}}^{\frac{n+q-1}{n+q}}}{(\xi+1)^{n+q} - \xi^{n+q}}  \;\;\; \forall v \in [0,1] ~.
\end{eqnarray*}
\item[Case 4 - $q < \infty$, $\rho < 0$, $D<\infty$:]
\[
 \I(M^n,g,\mu) \geq \min \left \{ \begin{array}{l}
\inf_{\xi \geq 0} \J( \sinh(\sqrt{-\delta} t)^{n+q-1}, [\xi,\xi+D] ) ~ , \\
\phantom{\inf_{\xi \in \Real}} \J( \exp(\sqrt{-\delta} (n+q-1) t) , [0,D] ) ~, \\
\inf_{\xi \in \Real} \J(\cosh(\sqrt{-\delta} t)^{n+q-1},[\xi,\xi+D] )
\end{array}
\right \} ~.
\]
\item[Case 5 - $q = \infty$, $\rho \neq 0$, $D < \infty$:]
\[
\I(M^n,g,\mu) \geq \inf_{\xi \in \Real} \J(\exp(-\frac{\rho}{2} t^2),[\xi,\xi+D]) ~.
\]
\item[Case 6 - $q = \infty$, $\rho > 0$, $D = \infty$:]
\[
\I(M^n,g,\mu) \geq \J(\exp(-\frac{\rho}{2} t^2),\Real) = \J(\Real,\abs{\cdot},\gamma^\rho_1) ~.
\]
\item[Case 7 - $q = \infty$, $\rho = 0$, $D<\infty$:]
\begin{eqnarray*}
\I(M^n,g,\mu)(v) &\geq& \inf_{H \geq 0} \J(\exp(H t),[0,D])(v) \\
&=& \frac{1}{D} \inf_{w > 0} (\min(v,1-v) + w) \log(1+1/w) \;\;\; \forall v \in [0,1] ~.
\end{eqnarray*}
\end{description}
In all the remaining cases, we have the trivial bound $\I(M^n,g,\mu) \geq 0$.
\end{cor}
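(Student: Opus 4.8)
The plan is to derive each of the seven cases as a specialization of the master bound \eqref{eq:main1}, namely
\[
\I(M,g,\mu) \geq \inf_{H \in \Real,\; a \in [0,D]} \J\brac{J_{H,\rho,n+q-1},[-a,D-a]} ~,
\]
by analyzing, for each regime of $(\rho,q,D)$, exactly which model densities $J_{H,\rho,m}$ (with $m = n+q-1$) arise and how the two free parameters $H$ and $a$ may be reduced to the single parameter appearing in the statement. The main reduction throughout is translation invariance: since $\J$ depends only on the measure $\mu_{f,L}$ up to translation of $L$, shifting the interval $[-a,D-a]$ back to $[0,D]$ replaces $J_{H,\rho,m}(t)$ by $J_{H,\rho,m}(t-a)$, and by Remark \ref{rem:J-char} the translate $J_{H,\rho,m}(t-a)$ is again (proportional to) a solution of the same ODE, hence equals $c \cdot J_{H',\rho,m}(t - t_0)$ on its support for appropriate $H'$ and a new root location $t_0$. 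Thus the two-parameter family $\{(H,a)\}$ collapses, on each interval of fixed length $D$, to a one-parameter family indexed by where one looks relative to the roots of the model density; this is precisely the parameter $\xi$ (or $w$) in the corollary.

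The case-by-case work is then elementary. When $q < \infty$ and $\rho > 0$ (so $\delta = \rho/(n+q-1) > 0$), $J_{H,\rho,m}$ is $\brac{c_\delta(t) + \tfrac{H}{m}s_\delta(t)}_+^m$, which by the angle-addition formula is a translate of $\brac{\cos(\sqrt\delta(t-t_0))}_+^m$, supported on an interval of length $\pi/\sqrt\delta$; restricting to a subinterval of length $D$ and translating gives, after the substitution $t \mapsto t - \pi/(2\sqrt\delta)$, the density $\sin(\sqrt\delta t)^{n+q-1}$ on $[\xi,\xi+D]$ — with $\xi \in [0,\pi/\sqrt\delta - D]$ when $D < \pi/\sqrt\delta$ (Case 1), and the whole interval $[0,\pi/\sqrt\delta]$ being forced when $D \geq \pi/\sqrt\delta$ (Case 2). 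When $\rho < 0$, $c_\delta + \tfrac{H}{m}s_\delta$ is a combination of $\cosh$ and $\sinh$ whose positive part is either $\sinh(\sqrt{-\delta}(t-t_0))$, $\cosh(\sqrt{-\delta}(t-t_0))$, or (in the boundary case $|H/m| = 1$) $\exp(\pm\sqrt{-\delta}\,t)$, giving the three options in Case 4; the $\rho = 0$ limit degenerates these to $t^{n+q-1}$, the constant $1$, and $\exp(Ht)$, yielding Case 3 (with the further observation that $\exp(Ht)$ with $H\to 0$ subsumes into the constant, and $\J(1,[0,D])$ is computed explicitly). For $q = \infty$ one has $J_{H,\rho,\infty}(t) = \exp(Ht - \tfrac\rho2 t^2)$: when $\rho \neq 0$ this is a Gaussian centered at $H/\rho$, so on an interval of length $D$ it is a translate of $\exp(-\tfrac\rho2 t^2)$ restricted to $[\xi,\xi+D]$ (Case 5), and when $D = \infty$, $\rho > 0$ no restriction survives and one gets the full Gaussian $\gamma_1^\rho$ (Case 6); when $\rho = 0$ one is left with $\exp(Ht)$ on $[0,D]$, and an explicit optimization over $H \geq 0$ (equivalently over $w = 1/(e^{HD}-1) > 0$) gives the closed form in Case 7. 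The sign normalizations $\xi \geq 0$ versus $\xi \in \Real$, and the restriction $H \geq 0$, come from the symmetry $t \mapsto -t$ together with the fact that $\J$ is unchanged under reflecting the measure.

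The one genuinely computational point — and the step I expect to require the most care — is the explicit evaluation of the one-dimensional functions $\J(f,L)$ in Cases 3 and 7, where the infimum over $\xi$ (resp. $w$) must be put in the displayed closed form. For $\J(1,[0,D])$ and for power densities $t^{n+q-1}$ and exponential densities $e^{Ht}$ on a bounded interval, half-lines $(-\infty,a)$ and $(a,\infty)$ have explicitly computable mass and boundary measure, so $\J(v)$ is obtained by inverting the CDF; the content is then the algebra of reparametrizing the optimal endpoint in terms of $\xi$ or $w$ and checking that $\min(v,1-v)$, $\max(v,1-v)$ appear with the stated exponents. All of this is routine but must be done carefully with the normalization constants; since the corollary explicitly flags the proof as elementary, I would present these computations compactly, emphasizing the ODE/translation structure above and relegating the one-dimensional calculus to a few lines.
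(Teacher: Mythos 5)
Your high-level plan coincides with the paper's: for each parameter regime use the ODE/translation structure of $J_{H,\rho,m}$ to collapse the two-parameter infimum over $(H,a)$ in Theorem~\ref{thm:main1} to a one-parameter family, identify that family explicitly, and then carry out the elementary one-dimensional computations in Cases 3 and 7. For Cases 5, 6 and 7 this works as you describe---completing the square (resp.\ scale and reflection invariance of the exponential together with the reflection invariance of $\J$) collapses the parameters correctly.

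There is, however, a genuine gap in your treatment of Cases 1--4. You attribute the parameter restrictions $\xi \geq 0$ (and the reduction to the full interval $[0,\pi/\sqrt\delta]$ in Case 2) to ``the symmetry $t \mapsto -t$ together with the fact that $\J$ is unchanged under reflecting the measure.'' But the densities $\sin(\sqrt\delta\,t)_+^m$, $t_+^m$ and $\sinh(\sqrt{-\delta}\,t)_+^m$ are not symmetric under $t \mapsto -t$, and the symmetry $t \mapsto \pi/\sqrt\delta - t$ in Case 1 maps both the unrestricted range $[-D,\pi/\sqrt\delta]$ (which is what the change of variables actually produces) and the target range $[0,\pi/\sqrt\delta-D]$ to themselves, so it cannot shrink the former to the latter. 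Since an infimum over a larger parameter set is a priori \emph{smaller}, the asserted lower bound on $\I$ does not follow from your argument. The missing ingredient is a monotonicity property of $\J$ for log-concave densities: the map $(a,b) \mapsto \J(f,[a,b])(v)$ is non-increasing in $b$ and non-decreasing in $a$ (the paper isolates this as Lemma~\ref{lem:profile-decreasing}, proved by showing $I := f \circ F^{-1}$ is concave and hence $I'(x) \leq I(x)/x$). This is used as follows: for $\xi < 0$ the density vanishes on $[\xi,0]$, so $\J(f,[\xi,\xi+D]) = \J(f,[0,\xi+D])$; since $\xi+D < D$, the lemma gives $\J(f,[0,\xi+D]) \geq \J(f,[0,D])$, and thus the infimum over $\xi<0$ is dominated by the value at $\xi=0$ (and symmetrically at the other end of the support). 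This lemma is not a routine remark---it depends essentially on log-concavity---so it must be stated and proved, and your symmetry justification replaced by it. One further small slip: in Case 3 ($q<\infty$, $\rho=0$) the degenerate densities are $(1+Ht/m)_+^m$ and, in the limit $H\to 0$, the constant; the exponential density $\exp(Ht)$ occurs only in Case 7 ($q=\infty$), not in Case 3.
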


Note that when $q$ is an integer, $\J( \sin(\sqrt{\delta} t)^{n+q-1} , [0,\pi/\sqrt{\delta}] ) = \I(S^{n+q},g^\rho_{can},\mu_{g^\rho_{can}})$ by the isoperimetric inequality on the sphere, and so Case 2 with $q=0$ recovers the Gromov--L\'evy isoperimetric inequality \cite{GromovGeneralizationOfLevy} stated earlier; for general $q < \infty$, Case 2 was obtained by Bayle \cite[Theorem 3.4.18]{BayleThesis}.
Case 6 recovers the Bakry--Ledoux isoperimetric inequality \cite{BakryLedoux,MorganManifoldsWithDensity}. To the best of our knowledge, all remaining cases are new.
A non-sharp version of Case 7 (with a strictly worse numerical constant) may also be deduced from our results in \cite{EMilmanGeometricApproachPartI}.
To illuminate the transition between Cases 1 and 2, note that if $(M^n,g,\mu)$ satisfies the $CD(\rho,n+q)$ condition with $\rho > 0$, the diameter of $M$ is bounded above by $\pi / \sqrt{\delta}$: when $q=0$ this is the classical Bonnet-Myers theorem (e.g. \cite{GHLBookEdition3}), which was extended to $q > 0$ by Qian \cite{QianWeightedVolumeThms}; these bounds also easily follow from our proof.

The main justification for considering the bounds given in Theorem \ref{thm:main1} and Corollary \ref{cor:main1} is:

\begin{thm} \label{thm:main2}
For any $n \geq 2$, $\rho \in \Real$, $q \in [0,\infty]$, $D \in (0,\infty]$ and $v \in [0,1]$, the lower bound provided in Corollary \ref{cor:main1} (or equivalently, the one provided in Theorem \ref{thm:main1}) on $\I(M,g,\mu)(v) = \inf \set{ \mu^+(A) \; ;\;  A \subset M \; ,\; \mu(A) = v }$ for a manifold-with-density $(M^n,g,\mu)$ satisfying the $CDD(\rho,n+q,D)$ condition, is sharp.
\end{thm}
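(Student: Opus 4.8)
The plan is to complement the lower bound of Theorem~\ref{thm:main1} with matching \emph{model spaces}. Since by Corollary~\ref{cor:inf-attained} the infimum in (\ref{eq:main1}) is attained pointwise, it suffices to produce, for each admissible pair $(H,a)$ — or, when the diameter constraint is tight, for a cofinal family of nearby pairs, allowing an innocuous limiting argument — an $n$-dimensional manifold-with-density $(M_{H,a},g,\mu)$ satisfying $CDD(\rho,n+q,D)$ together with a Borel set $A\subset M_{H,a}$ of $\mu$-measure $v$ whose Minkowski boundary measure equals $\J(J_{H,\rho,n+q-1},[-a,D-a])(v)$. Applying Theorem~\ref{thm:main1} to $M_{H,a}$ itself shows that this value is in fact equal to $\I(M_{H,a},g,\mu)(v)$, and taking the infimum over $(H,a)$ then gives sharpness at $v$.

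\textbf{The warped-product models ($q<\infty$).} Write $m=n+q-1$, $\delta=\rho/m$, and $f:=J_{H,\rho,m}^{1/m}=c_\delta+\tfrac Hm s_\delta$, so that by Remark~\ref{rem:J-char} one has $-m(f)''/f=\rho$ on the interval $L:=[\xi_-,\xi_+]\cap[-a,D-a]$ where $f>0$ (the effective support of $J_{H,\rho,m}$ on $[-a,D-a]$). Take $M_{H,a}=L\times N^{n-1}$ with $g=dt^2+f(t)^2 g_N$ and $\Psi=f^q$, where $(N^{n-1},g_N)$ is a small, highly curved round sphere (a flat torus when convenient). The marginal of $\mu=\Psi\,vol_g$ under $t$ is proportional to $f^{q}\cdot f^{\,n-1}=J_{H,\rho,m}$; since $t$ is an arclength coordinate with $1$-Lipschitz projection, the slab $A=\{t<c\}$ satisfies $A_\eps=\{t<c+\eps\}$, so $\mu^+(A)$ equals the marginal density at $c$, and choosing $c$ so that $\{t<c\}$ realizes $\J(J_{H,\rho,m},L)(v)$ (the symmetric half-line being handled the same way) matches the target. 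It remains to verify $CDD$: a direct computation using $-mf''/f=\rho$ shows the $\partial_t\partial_t$-component of $Ric_{g,\Psi,q}$ is identically $\rho$, while the component along $N$ equals a curvature term coming from $Ric_{g_N}$ plus a quantity bounded above independently of $g_N$; taking $Ric_{g_N}$ large enough — possible for a sphere of sufficiently large curvature once $n\ge3$ — secures the fiber direction and simultaneously makes the contribution of $N$ to $\mathrm{diam}(\overline\Omega)$ negligible, so $\mathrm{diam}(\overline\Omega)\le D$ after an infinitesimal shrinking of $L$ (the source of the limiting argument). Geodesic convexity of $\overline\Omega=L\times N$ reduces to $f'\ge0$ at the left and $f'\le0$ at the right endpoint of $L$, i.e. to $L$ containing the maximum of $f$; I would show that the $(H,a)$ realizing the pointwise infimum always have this property, since otherwise $J_{H,\rho,m}$ is monotone on $L$ and the infimum can be checked to be attained at another admissible pair. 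Finally $\Psi=f^q$ is positive and $C^\infty$ on $\overline\Omega$ when $L$ avoids $\xi_\pm$, and if $L$ abuts a root the fiber collapses smoothly to a pole, so $\Omega$ is a geodesic ball and there is no boundary.

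\textbf{The case $q=\infty$.} Here the construction is flat: take $M=\Real^n$ (or, when $D<\infty$, a sufficiently thin box $[c_-,c_+]\times[0,\eps]^{n-1}$, shrinking $[c_-,c_+]$ slightly for the diameter) with product density proportional to $\exp(Hx_1-\tfrac\rho2 x_1^2)\prod_{j\ge2}\exp(-\tfrac\rho2 x_j^2)$. Since there is no $\tfrac1q$ term, $Ric_{g,\Psi,\infty}=\nabla^2(-\log\Psi)=\rho\,\mathrm{Id}=\rho g$ exactly, the $x_1$-marginal is $\exp(Hx_1-\tfrac\rho2x_1^2)$, and the half-space slabs $\{x_1<c\}$ attain the bound — recovering in particular the Bakry--Ledoux (Gauss space) extremal of Case~6.

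\textbf{Main obstacle.} The delicate regime is low dimension with positive curvature: $n=2$ with $\rho>0$ (and, for finite $q$, the ``$\sinh$-type'' and ``$t^{n+q-1}$ on $[\xi,\xi+D]$ with $\xi>0$'' branches, where the warping of the naive warped product is monotone and its outer boundary is concave rather than convex, violating geodesic convexity of $\Omega$). Here the one-dimensional fiber of a warped surface is necessarily flat and cannot carry the curvature the fiber computation demands, while the flat product of the previous paragraph fails for finite $q$ because the rank-one correction $\tfrac1q\nabla\log\Psi\otimes\nabla\log\Psi$ introduces off-diagonal terms a product density cannot absorb. I would resolve this by realizing the relevant one-parameter families of one-dimensional model measures as push-forwards along Riemannian submersions from higher-dimensional space forms — e.g. for $\rho>0$ and integral $q$, as the quotient of $S^{n+q}_\delta$ by the $SO(q+1)$-action fixing an $S^{n-1}$, whose orbit-volume function becomes the density on the $n$-dimensional orbit space — restricted to a \emph{geodesically convex} sub-domain on which the descended density is positive (a geodesic ball, not a band). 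The Bakry--Émery condition descends along the submersion, so $CDD$ is inherited; geodesic balls centered at suitable boundary points lift to genuine geodesic balls upstairs, hence realize the space form's profile (with the restricted interval $[\xi,\xi+D]$ arising from the relative perimeter of such balls); and non-integral $q$ together with the negatively curved branches then follow by continuity of both sides of the inequality in $(\rho,n,q,D)$. The technical heart I expect to require the most care is simultaneously matching the convex sub-domain, the positivity of the descended density near the singular locus of the submersion, and the admissible interval, so that slabs of \emph{every} prescribed measure $v$ attain the bound.
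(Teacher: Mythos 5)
Your warped-product strategy for $q<\infty$ and $n\geq 3$ is indeed the backbone of the paper's argument, and the $q=\infty$ Euclidean boxes and the scaling remark for $\rho\leq0$, $D=\infty$ match the paper exactly. However, the claim on which the rest of your construction hinges — that the pointwise infimum over $(H,a)$ is always attained at an interval $L$ on which $f=J_{H,\rho,m}^{1/m}$ has an interior maximum, so that $f'\geq0$ at the left endpoint and $f'\leq 0$ at the right — is simply false. Already in Case 3 ($\rho=0$, $q<\infty$), for small $v$ the extremal model is $t^{n+q-1}$ on $[\xi,\xi+D]$ with $\xi>0$, which is strictly increasing; the same issue occurs for the $\sinh$-branch of Case 4 and for many $(H,a)$ in Case 1. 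You acknowledge this in your ``Main obstacle'' paragraph, but the device the paper actually uses to deal with it is not submersions — it is to keep the warped product and, at an endpoint $e^i$ where $f'$ has the wrong sign for convexity, to shrink the collar $[-a-\omega_1(\eps),-a]$ to size $O(\eps)$ and close the fiber off into a smooth round cap, with warping $\propto\sin(t/\eps)$ near the vertex, interpolated to $c_J(\eps)J_0(t)$ via the function $\Phi_\eps$ whose existence is the content of Lemma~\ref{lem:Phi}; simultaneously the \emph{density} is flattened to a constant near the vertex (otherwise $\Psi$ would vanish or fail to be smooth at the pole — note your own parenthetical ``if $L$ abuts a root the fiber collapses smoothly to a pole'' would produce $\Psi=f^q=0$ there, violating the $CDD$ definition). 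The verification that the $CDD$ condition still holds on the cap rests on the estimates \eqref{eq:conc1}--\eqref{eq:conc2}, which show that the $1/\eps^2$ terms dominate the cross-terms contributed by the interpolated density. Without this cap construction, the warped product covers only a strict subset of the required pairs $(H,a)$.

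The submersion-plus-continuity fallback you propose for the remaining cases does not close the gap. Continuity of the lower bound in $q$ does not transfer sharpness from integer to non-integer $q$: an extremizer for $CDD(\rho,n+q',D)$ with an integer $q'<q$ also satisfies $CDD(\rho,n+q,D)$, but its profile equals the bound for the \emph{smaller} generalized dimension, which can be strictly larger than the bound for $q$ by a quantity bounded away from zero when $q-q'$ is of order $1$; meanwhile an integer $q'>q$ does not give $CDD(\rho,n+q,D)$ at all. Quotients of space forms by $SO(q+1)$ also do not produce the $\cosh^{n+q-1}$ or shifted-$\sinh$ models, nor the one-parameter family of intervals $[\xi,\xi+D]$ needed for Cases 1 and 4, nor geodesically convex subdomains on which the descended density is bounded below; the ``technical heart'' you flag is precisely where the proof would live and is absent. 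Finally, $n=2$ requires a genuinely separate fix: the paper modulates the density by an extra angular factor $\cos(h_\eps(t)\theta)^q$ and restricts to a half-fiber, to inject into the $\partial_\theta$-direction the generalized curvature that the term $(n-2)/\varrho_\eps^2$ supplied for $n\geq3$ but vanishes when $n=2$. Your proposal contains no analogue of this step, and without it the two-dimensional case — which the paper singles out as the hardest — is left unproved.
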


We conclude that with the exception of the previously known cases 2 and 6 above, there is no \emph{single} model space to compare to, and that a simultaneous comparison to a natural \emph{one parameter family} of model spaces is required, nevertheless yielding a sharp comparison result. The fact that the sharp lower bound on the boundary measure of a set having measure $v \in (0,1)$ is determined by a model space depending not only on $\rho$, $n+q$ and $D$, but also (in general) on $v$, was (to the best of our knowledge) unanticipated (see also Subsection \ref{subsec:BakryQian}). 

\medskip

Note that Theorem \ref{thm:main2} would hold trivially if the requirement that the bounds are sharp for \emph{any $n \geq 2$} were omitted from its formulation, and if we extend our definitions to include the case of one-dimensional manifolds-with-density:
\begin{dfn*}
The one-dimensional space $(\Real,|\cdot|,\mu)$ is said to satisfy the $CDD(\rho,1+q,D)$ condition, if there exists
an open interval $\Omega \subset (\Real,|\cdot|)$ of length at most $D$, whose closure supports a probability measure $\mu = \Psi(x) dx$ with $\Psi > 0$ in $\Omega$ and $\log(\Psi) \in C^2(\Omega)$, so that:
\[
- (\log \Psi)''  - \frac{1}{q} ((\log \Psi)')^2 = - q \frac{(\Psi^{1/q})''}{\Psi^{1/q}} \geq \rho \; \text{ in } \Omega
\]
(with the usual interpretation when $q=0$ or $q=\infty$).
\end{dfn*}

It is not hard to check (see Corollary \ref{cor:main1-1D}) that Theorem \ref{thm:main1} remains valid for such one-dimensional spaces. 
By construction, all of the one-dimensional model spaces given in Theorem \ref{thm:main1} (or equivalently Corollary \ref{cor:main1}) satisfy the $CDD(\rho,n+q,D)$ condition, immediately implying the sharpness in the (topological) one-dimensional case. 
It is not uncommon in the manifold-with-density literature to only demonstrate the optimality of a given estimate, as a function of the generalized dimension $n+q$, just for the (topological) one-dimensional case $n=1$; however,  we insist on demonstrating the optimality for all $n \geq 2$ as well, and this poses a greater technical challenge. 

\begin{rem} \label{rem:1D}
Note that in the one-dimensional case we do not require that $\Psi > 0$ nor $\log(\Psi) \in C^2$ on the entire $\overline{\Omega}$, as we did for technical reasons in the higher-dimensional case.
To dispose of this and some of our other technical assumptions, we present an appropriate approximation argument in Section \ref{sec:gen}.
\end{rem}

\subsection{Method}

Our method is entirely geometric, following the approach set forth by Gromov in \cite{GromovGeneralizationOfLevy}. We heavily rely on results from Geometric Measure Theory asserting the regularity of isoperimetric minimizers, both in the interior and on the boundary. To estimate the measure swept out by the normal map emanating from the regular part of the minimizer under the $CDD(\rho,n+q,D)$ condition, we employ a generalized version of the Heintze--Karcher theorem due to V. Bayle \cite[Appendix E]{BayleThesis} and F. Morgan \cite{MorganManifoldsWithDensity}. This reduction to the one-dimensional case allows us to obtain the lower bound on the isoperimetric profile given by Theorem \ref{thm:main1}, without compromising on its sharpness.

\medskip

To prove the sharpness for any $n \geq 2$,  we emulate our one-dimensional model densities on a geodesically convex domain of an $n$-dimensional manifold, by thickening arbitrarily slightly in $n-1$ dimensions. When $\rho = 0$ or $q=\infty$ this is very easy to accomplish simply by considering Euclidean space, and so for instance the model spaces for Case 3 are truncated cones $\set{(x_1,y) \in \Real \times \Real^{n-1} ; x_1 \in [\xi_0,\xi_0+D_\eps], |y| \leq \eps x_1}$ ($\xi_0 \geq 0$) endowed with a density proportional to $x_1^{q}$, and for Case 5 these are rectangles of the form $[\xi_0,\xi_0 + D_\eps] \times [0,\eps]^{n-1}$ ($\xi_0 \in \Real$) endowed with a density proportional to $\exp(-\frac{\rho}{2} |x|^2)$ (or more precisely, smoothed versions thereof). However, to establish the sharpness when $\rho \neq 0$ and $q < \infty$, we already need to construct a family of rotationally-invariant manifolds endowed with appropriate metrics and densities, and this poses a much greater technical challenge, in part due to the required geodesic convexity of $\Omega$; in fact, the hardest case turns out to be the two-dimensional one.

\medskip

Applications of these results will be developed in a subsequent work. These include analysis of the asymptotic behaviour of the lower bounds given by Corollary \ref{cor:main1} as a function of the parameters $\rho$, $n+q$, $D$ and $v$, and a derivation of corresponding Sobolev inequalities on spaces satisfying the $CDD(\rho,n+q,D)$ condition, improving in many cases the best known bounds (see Subsection \ref{subsec:applications})

\medskip
The rest of this work is organized as follows. In Section \ref{sec:pre} we describe the ingredients from Riemannian Geometry and Geometric Measure Theory we require for the proof. Theorem \ref{thm:main1} and some generalizations are proved in Section \ref{sec:proof}. Corollary \ref{cor:main1} is deduced in Section \ref{sec:model}, where we identify the corresponding families of model densities. Theorem \ref{thm:main2} regarding the sharpness of our results is proved in Section \ref{sec:sharp}. An extension of the Curvature-Dimension-Condition is described in Section \ref{sec:gen}. Concluding Remarks are presented in Section \ref{sec:remarks}. Several useful properties of the model densities are collected in the Appendix.

\medskip
\noindent
\textbf{Acknowledgements.} I would like to thank Frank Morgan for his interest, suggestions and encouragement, and for patiently answering my questions. I also thank Shahar Mendelson for his comments regarding this work.

\section{Geometric Preliminaries} \label{sec:pre}

\subsection{Generalized Heintze--Karcher Theorem}

The first ingredient we will need is a generalization of the Heintze--Karcher theorem (\cite{HeintzeKarcher},\cite[Theorem 4.21]{GHLBookEdition3}), which is a classical volume comparison theorem in Riemannian Geometry when there is no density present. Given a $C^2$ hypersurface $S$ in $(M^n,g)$ oriented by a unit normal vector field $\nu$, the classical theorem bounds the volume of the one-sided neighborhood of $S$ in terms of the mean-curvature of $S$ and a lower bound on $Ric_g$. Recall that the \emph{mean-curvature} of $S$ at $x$, denoted $H^\nu_S(x)$, is defined as the trace of the second fundamental form $II^\nu_{S,x}$; it is customary to divide the trace by $n-1$, the dimension of $S$, but we will \emph{refrain} from this normalization here.
We conform to the following \emph{non-standard} convention for specifying the sign of $II^\nu_{S,x}$: the second fundamental form of the sphere in Euclidean space with respect to the \emph{outward} normal is \emph{positive} definite (formally: $II^\nu_{S,x}(u,v) = g(\nabla_u \nu , v)$ for $u,v \in T_x S$, where $\nabla$ is the covariant derivative). In the case that $(M,g)$ is equipped with a measure $\mu = \Psi \cdot vol_g$ with $\log \Psi \in C^1(M)$, we define following V. Bayle \cite{BayleThesis}:

\begin{dfn*}
The generalized mean-curvature of $S$ at $x \in S$ with respect to the measure $\mu$ and unit normal vector field $\nu$, denoted $H_{S,\mu}^\nu(x)$, is defined as:
\[
H_{S,\mu}^\nu(x) := H_S^\nu(x) + \nu(\log \Psi) (x) ~.
\]
\end{dfn*}

The following generalization of the classical Heintze--Karcher theorem (the case $q=0$) to the case of manifolds-with-density is due to V. Bayle \cite[Appendix E]{BayleThesis} when $q \in (0,\infty)$, and to F. Morgan \cite{MorganManifoldsWithDensity} in the case $q=\infty$ (the latter may also be obtained by a limiting argument in view of Remark \ref{rem:J-char}):

\begin{thm}[Generalized Heintze--Karcher, Bayle--Morgan] \label{thm:gen-HK}
Let $S$ denote a $C^2$ oriented hypersurface in an $n$-dimensional manifold $(M,g)$ with normal unit vector field $\nu$, and given $r > 0$, set:
\[
S_r^+ := \set{\exp_x(t \nu(x)) ; x \in S , t \in [0,r]} ~.
\]
Assume that for some $\rho \in \Real$ and $q \in [0,\infty]$:
\[
Ric_{g,\Psi,q} \geq \rho g \;\;\; \text{ on } S_r^+ ~.
\]
Then:
\[
\mu(S_r^+) \leq \int_S \int_0^{r} J_{H^\nu_{S,\mu}(x),\rho,n+q-1}(t) dt \; dvol_{S,\mu}(x) ~,
\]
where $vol_{S,\mu} = \Psi \cdot vol_S$, and $vol_S$ denotes the induced Riemannian volume form on $S$.
\end{thm}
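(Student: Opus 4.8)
The plan is to reduce the bound to a pointwise estimate along each normal geodesic, obtained by a Jacobi-field computation combined with a Riccati-type differential inequality controlled by the curvature-dimension hypothesis. First I would set up the normal exponential map $E : S \times [0,r] \to M$, $E(x,t) = \exp_x(t\nu(x))$, and express $\mu(S_r^+)$ via the coarea/change-of-variables formula. Writing the pulled-back volume form as $E^\ast(vol_g) = \theta(x,t)\, dt\, dvol_S(x)$ on the set where $E$ is a local diffeomorphism, one has $\theta(x,t) = \det\big(d_x E_t\big)$ where $E_t(\cdot) = E(\cdot,t)$, and $\theta(x,0)=1$, $\partial_t\theta(x,0) = -H_S^\nu(x)$ (with the sign convention fixed in the statement). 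Since the image $S_r^+$ may be covered with multiplicity $\geq 1$ off a measure-zero set, and since $E$ ceases to be a local diffeomorphism past the first focal/cut time, it suffices to integrate $\theta$ only up to that time; so we get the inequality
\[
\mu(S_r^+) \;\le\; \int_S \int_0^{r} \big(\theta \,\Psi\big)(x,t)\,\mathbf 1_{\{t < \tau(x)\}}\; dt\; dvol_S(x),
\]
where $\tau(x)$ is the first focal time. Setting $\tilde\theta(x,t) := \theta(x,t)\,\Psi(E(x,t))/\Psi(x)$ we have $dvol_{S,\mu} = \Psi\, dvol_S$, so the theorem follows once we show, for each fixed $x$ and all $0 \le t < \tau(x)$, the pointwise bound $\tilde\theta(x,t) \le J_{H,\rho,n+q-1}(t)$ with $H = H_{S,\mu}^\nu(x)$, together with the fact that $J_{H,\rho,n+q-1}$ vanishes at and beyond the relevant root (so extending the $t$-integration to all of $[0,r]$ only increases the right-hand side).

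The heart is the one-variable argument. Fix $x$, drop it from the notation, and let $y(t) := \log \tilde\theta(t)$, which is smooth on $[0,\tau)$ with $y(0)=0$. Set $L(t) := \tilde\theta(t)^{1/(n+q-1)}$ (so $\log L = y/(n+q-1)$); the claim is $L(t) \le c_\delta(t) + \tfrac{H}{n+q-1} s_\delta(t)$ on the interval where the right side is positive, with $\delta = \rho/(n+q-1)$. The standard Riemannian computation (variation of the volume element along a geodesic) gives $y'(t) = -\,\mathrm{tr}\big(II_{S_t}\big) + \nu_t(\log\Psi)$, i.e. $y'$ is the generalized mean curvature of the parallel hypersurface $S_t$, so $y'(0) = -H_{S,\mu}^\nu = -H$... wait, I need to be careful with the outward/inward sign — with the convention in the statement $y'(0) = -H_S^\nu + \nu(\log\Psi)$, and since the theorem's $J$ is built from the ODE $-(\log J)'' - \frac1m((\log J)')^2 = \rho$ with $J'(0)=H$, the correct normalization is $J(0)=1$, $(\log J)'(0) = H = H_{S,\mu}^\nu$; this matches after tracking that the generalized mean curvature of $S_t$ at $t=0$ equals $H_{S,\mu}^\nu$ up to the chosen sign. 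Differentiating, the Riccati equation for the shape operator together with the Bochner/Jacobi identity yields
\[
y'' + \frac{1}{n-1}\big(y' - \nu_t(\log\Psi)\big)^2 \;\le\; -\,\mathrm{Ric}_g(\nu_t,\nu_t) \;\le\; -\,\mathrm{Ric}_{g,\Psi,q}(\nu_t,\nu_t) - \nu_t^2(\log\Psi) - \frac1q\big(\nu_t(\log\Psi)\big)^2,
\]
where the first inequality is Cauchy–Schwarz on the traceless part of the second fundamental form. Adding $\frac{d}{dt}\nu_t(\log\Psi) = \nu_t^2(\log\Psi)$ (this is exactly what distinguishes the generalized from the classical mean curvature) and completing the algebra — this is where the $\tfrac1q$ term and the weight exponent $n+q-1$ conspire correctly, via the elementary inequality $\tfrac{1}{n-1}a^2 + \tfrac1q b^2 \ge \tfrac{1}{n+q-1}(a+b)^2$ — gives the clean differential inequality
\[
-\,y'' - \frac{1}{n+q-1}(y')^2 \;\ge\; \rho \qquad\text{on } [0,\tau),
\]
with the convention that for $q=\infty$ the quadratic term drops and for $q=0$ (constant $\Psi$) this is the classical estimate with $n$. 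A Sturm-type comparison then upgrades this to $L(t) \le c_\delta(t) + \tfrac{H}{n+q-1}s_\delta(t)$: indeed $L$ satisfies $L'' + \delta L \le 0$ wherever $L > 0$, with $L(0)=1$, $L'(0) = H/(n+q-1)$, and the right-hand function is the solution of the corresponding equality, so $L$ stays below it up to the first zero of the comparison function — and $L$ (hence $\tilde\theta$) must reach $0$ no later, which is consistent with $\tau$ being at most the first focal time and lets us replace $\tilde\theta(t)\mathbf 1_{\{t<\tau\}}$ by $J_{H,\rho,n+q-1}(t)$ (whose positive part is cut off at precisely that root by the $(\cdot)_+$ truncation). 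The cases $q=0$ and $q=\infty$ are handled by the same scheme with the degenerate forms of $J$: $q=0,\rho>0$ forces $\tilde\theta\equiv 0$ for $t>0$ (no room to move), and $q=\infty$ uses the Gaussian-type $J$ arising from $-y''-0 = \rho$ with the quadratic term absent being replaced by passing $n+q-1 \to \infty$, as noted in Remark \ref{rem:J-char}.

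The main obstacle I anticipate is not the one-dimensional comparison itself — that is a routine Riccati/Sturm argument once set up — but rather the bookkeeping around regularity and the cut locus: making precise that $E$ is a local diffeomorphism before the first focal time, that $S_r^+$ is covered with multiplicity at least one almost everywhere so that the change of variables produces an \emph{inequality} in the right direction, and that one may freely truncate each normal geodesic at its focal time without losing the bound (using that $J_{H,\rho,n+q-1}$ has already decayed to $0$ by then). A secondary care point is the algebra unifying the three regimes of $q$ — in particular verifying the sign conventions for $H$ and $II$ stated in the excerpt propagate correctly into $J'(0)=H_{S,\mu}^\nu$ — but since the theorem is quoted from Bayle and Morgan, I would state these steps and cite \cite{BayleThesis, MorganManifoldsWithDensity} for the detailed verification rather than reproduce it in full.
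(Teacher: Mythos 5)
The paper does not prove Theorem~\ref{thm:gen-HK}: it is quoted with attribution to Bayle (the $q\in(0,\infty)$ case) and Morgan ($q=\infty$), so there is no ``paper's own proof'' to compare against. That said, your sketch correctly reconstructs the standard Riccati-comparison argument that underlies those references --- normal exponential map, coarea/first-focal-time truncation, the traced Riccati equation $(\operatorname{tr}A)'+\operatorname{tr}(A^2)+\operatorname{Ric}(\dot\gamma,\dot\gamma)=0$ combined with Cauchy--Schwarz on the traceless part, and the key elementary inequality $\tfrac{1}{n-1}a^2+\tfrac{1}{q}b^2\geq\tfrac{1}{n+q-1}(a+b)^2$ applied to $a=(\log\theta)'$, $b=\nu_t(\log\Psi)$ to absorb the density's contribution into an effective dimension $n+q-1$. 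The Sturm comparison to $c_\delta+\tfrac{H}{m}s_\delta$ with $m=n+q-1$ and $\delta=\rho/m$ is the right one, and your observation that extending the $t$-integral past the first root costs nothing because $J_{H,\rho,m}$ is already truncated by $(\cdot)_+$ is exactly the point.

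Two small slips worth cleaning up. First, with the paper's sign convention $II^\nu_{S,x}(u,v)=g(\nabla_u\nu,v)$ (sphere with \emph{outward} normal positive definite), the first variation of the volume element along a normal geodesic is $\partial_t\theta(x,0)=+H^\nu_S(x)$, not $-H^\nu_S(x)$; consequently $y'(0)=H^\nu_S+\nu(\log\Psi)=H^\nu_{S,\mu}=H$ falls out directly, with no sign juggling needed --- the hedging about ``up to the chosen sign'' obscures a step that is actually clean. Second, the aside that ``$q=0,\rho>0$ forces $\tilde\theta\equiv0$ for $t>0$'' confuses the $q=0$ case with the $m=0$ case: for an $n$-dimensional manifold with $n\geq2$ and $q=0$ one has $m=n+q-1=n-1\geq1$, and the relevant $J_{H,\rho,n-1}$ is the classical Heintze--Karcher Jacobian, not the degenerate indicator $1_{\{t=0\}}$. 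The indicator formula only arises for $m=0$, i.e.\ $n=1$, $q=0$, which is not in the scope of Theorem~\ref{thm:gen-HK}.
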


\begin{rem} \label{rem:first-variation}
It is easy to check (see \cite[3.4.6]{BayleThesis}, \cite[Proposition 7]{MorganManifoldsWithDensity}) that the first variation $\delta^1(u)$ of $vol_{S,\mu}(S)$ by
a normal variation of compact support and constant velocity $u(x)$ along $\nu$, is precisely determined by the generalized mean-curvature: $\delta^1(u) = \int_S H^\nu_{S,\mu}(x) u(x) dvol_{S,\mu}(x)$. This extends the classical fact from Riemannian geometry in the case of constant density (e.g. \cite[Theorem 5.20]{GHLBookEdition3}).
\end{rem}

\subsection{Existence and Regularity of Isoperimetric Minimizers}

The second ingredient we will need is the existence and regularity theory of isoperimetric minimizers on manifolds-with-density, provided by Geometric Measure Theory; for an extensive introduction to the latter, we refer to \cite{MorganBook4Ed,FedererBook,GiustiBook}. The results we describe below are classical in the case that $\Omega$ is a domain in Euclidean space with constant density, but the adaptations to the manifold-with-density setting are not as well known. We therefore sketch the argument where it is possible, and provide references elsewhere.

An isoperimetric minimizer in $(\Omega,d,\mu)$ of given measure $v \in (0,1)$ is a Borel set $A \subset \Omega$ with $\mu(A) = v$ for which the following infimum is attained:
\[
\mu^+(A) = \inf \set{ \mu^+(B) \;;\; \mu(B) = v } ( \; = \I_{(\Omega,d,\mu)}(v) \; ) ~.
\]
In general, isoperimetric minimizers of given measure need not necessarily exist; however, that is not the case in our setup.

Indeed, given a complete smooth oriented $n$-dimensional Riemannian manifold $(M,g)$, a domain (open connected set) $\Omega \subset M$, and a positive density $\Psi$ on $\overline{\Omega}$ so that $\log \Psi \in C^1(\overline{\Omega})$,
define the \emph{$\Psi$-weighted volume} of a Borel set $A \subset \Omega$ as:
\[
V_{\Psi}(A) := \int_A \Psi(x) dvol_g(x) ~,
\]
and the \emph{$\Psi$-weighted relative perimeter in $\Omega$} as:
\[
P_{\Psi}(A,\Omega) := \sup \set{ \int_A \brac{div(X) + g(\nabla \log \Psi,X)} \Psi(x) dvol_g(x) ; g(X,X) \leq 1 } ~,
\]
where $X$ is a $C^1$ smooth vector field over $M$ with compact support contained in $\Omega$, and $div(X)$ denotes the divergence of $X$.
When $\Psi \equiv 1$, we will simply write $V(A)$ and $P(A,\Omega)$.
It follows immediately by the Gauss--Green Divergence Theorem, that when $\partial A \cap \Omega$ is nice enough (say $C^2$), then:
\[
P_{\Psi}(A,\Omega) = \int_{\partial A \cap \Omega} \Psi(x) dvol_{\partial A}(x) ~.
\]
More generally, as in the constant density case (see \cite{GiustiBook,BuragoZalgallerBook}), it follows from the Gauss--Green--De Giorgi--Federer theorem (\cite[4.5.6]{FedererBook}, \cite[Chapter 12]{MorganBook4Ed}) that:
\begin{equation} \label{eq:gen-reduced-bdry}
P_{\Psi}(A,\Omega) = \int_{\partial^* A \cap \Omega} \Psi(x) d\H^{n-1}(x) ~,
\end{equation}
where $\partial^* A$ is the reduced boundary of $A$ and $\H^{k}$ denotes $k$-dimensional Hausdorff measure.
We refer to \cite{GiustiBook,BuragoZalgallerBook} for the definition of reduced boundary, and only remark that it is defined as the set of points where a unique measure-theoretic normal exists; in particular, it contains any point $x \in \partial A$ for which $\partial A$ is (say) $C^1$ smooth in a neighborhood of $x$. In addition, as in the case of constant density, it easily follows (e.g. \cite{GiustiBook,BayleRosales}) that $P_{\Psi}(A,\Omega)$ is lower-semi-continuous with respect to convergence of sets in the $L^1_{loc}(\Psi) := L^1_{loc}(\Omega,\Psi vol_g)$ topology, i.e.:
\[
A_i \rightarrow_{L^1_{loc}(\Psi)} A \;\;\; \Rightarrow \;\;\; P_{\Psi}(A,\Omega) \leq \liminf_{i \rightarrow \infty} P_{\Psi}(A_i,\Omega) ~,
\]
where $A_i \rightarrow_{L^1_{loc}(\Psi)} A$ if for any compact $K \subset \Omega$:
\[
\lim_{i \rightarrow \infty} \int_{K} |1_{A_i}(x) - 1_{A}(x)| \Psi(x) dvol_g(x) = 0 ~.
\]

Assuming in addition that $V := \int_{\Omega} \Psi(x) dvol_g(x) < \infty$, it is well known (see e.g. \cite[Sections 5.5,9.1]{MorganBook4Ed}, \cite{MorganManifoldsWithDensity}) that there exists a $\Psi$-weighted relative perimeter minimizer in $\Omega$ of any $\Psi$-weighted volume $v \in (0,V)$ (the cases $v=0,V$ are obvious). Indeed, let $\set{A_i}$ denote a sequence of subsets minimizing perimeter of a given volume $v$, i.e.  $V_{\Psi}(A_i) = v$ and $\lim_{i \rightarrow \infty} P_{\Psi}(A,\Omega) = \inf \set{P_{\Psi}(A_i,\Omega) ; V_{\Psi}(A) = v }$. By restricting to a sequence of increasing balls exhausting $M$, employing on each ball the local compactness theorem for $BV$ functions (e.g. \cite[Theorem 1.19]{GiustiBook} whose proof carries over to our setup),
passing to a convergent subsequence in $L^1_{loc}(\Psi)$ and employing a standard diagonalization argument, it follows that there exists a set $A \subset \Omega$ so that $A_i$ converges to $A$ in $L^1_{loc}(\Psi)$ (see e.g. the proof of \cite[Theorem 2.1]{RitoreRosalesMinimizersInCones} for more details).
Using that $V < \infty$, it follows that the convergence is in fact in $L^1(\Psi)$ globally, hence $V_{\Psi}(A) = v$, and the lower-semi-continuity of perimeter concludes the standard claim.

\medskip

Now assume $V=1$ and denote $\mu = \Psi \cdot vol_g$. For a general Borel set $A \subset \Omega$, it is known that $\mu^+(A) \geq P_{\Psi}(A,\Omega)$ (see the proofs of \cite[Theorem 14.2.1]{BuragoZalgallerBook} or \cite[Theorem III.4.1]{Chavel-IsopBook} which carry over to our setup),
but in general the reverse inequality may be false. However, for a set $A$ minimizing $\Psi$-weighted perimeter in $\Omega$, we will see below that equality does hold, and so $A$ must also minimize Minkowski's notion $\mu^+$ of boundary measure, yielding the existence of isoperimetric minimizers of any given measure $v \in (0,1)$. We conclude that ultimately it does not matter with which definition of boundary measure one works with, and our choice of using Minkowski's exterior boundary measure is only a matter of expositional convenience (along with some convenient approximation properties as in Section \ref{sec:gen}).

\medskip

We now turn to describe the known regularity results for the boundary of an isoperimetric minimizer $\overline{\partial A \cap \Omega}$. In the Euclidean setting, the following regularity results are a consequence of \cite{DeGiorgiFirstRegularity,FedererRegularity,FedererBook,GMT,Gruter} (see also \cite{GiustiBook,MorganBook4Ed}); the extension to the Riemannian setting follows easily from the methods of Federer \cite[5.3.19]{FedererBook} (cf. \cite[8.5]{MorganBook4Ed} and \cite{TamaniniExtendedExcessFuncAndHolderRegularity}) or Almgren \cite{AlmgrenMemoirs} (whose approach was elucidated by Bombieri \cite{BombieriRegularityTheory} and Morgan \cite{MorganRegularityOfMinimizers}); the extension to the manifold-with-density setting is due to Morgan \cite[Remark 3.10]{MorganRegularityOfMinimizers} (see also \cite{BayleThesis} and \cite[Chapter 18]{MorganBook4Ed}).
All of the results we require are summarized in the following:

\begin{thm}[Almgren, De Giorgi, Federer, Giusti, Gonzales--Massari--Tamanini, Gr\"{u}ter, Morgan] \label{thm:regularity}
Let $(M,g)$ denote a complete smooth oriented $n$-dimensional Riemannian manifold, let $\Omega \subset M$ denote a domain with (possibly empty) $C^2$ boundary, and let $\mu$ denote a probability measure supported on $\overline{\Omega}$ so that $\mu = \Psi \cdot vol_g|_\Omega$, with $\Psi > 0$ on $\overline{\Omega}$ and $\log \Psi \in C^k(\overline{\Omega})$, for some $k \geq 2$.
Then for any $v\in(0,1)$, there exists a Borel $\Psi$-weighted relative perimeter minimizer $A \subset \Omega$ of $\Psi$-weighted volume $v$.
The interior boundary $\partial_i A := \overline{\partial A \cap \Omega}$ can be written as a disjoint union of a relatively open
regular part $\partial_r A$ and a closed set of singularities $\partial_s A$, with the following properties:
\begin{itemize}
\item
$\partial_r A \cap \Omega$ is a $C^k$-smooth, embedded hypersurface with outward unit normal $\nu(x)$ and constant generalized mean-curvature:
\[
\forall x \in \partial_r A \cap \Omega \;\;\; H^\nu_{\partial_r A \cap \Omega,\mu}(x) =: H_\mu(A) ~.
\]
\item
If $x \in \partial_r A \cap \partial \Omega$, then in a neighborhood of $x$, $\partial_r A$ is a $C^1$-smooth,
embedded hypersurface with boundary contained in $\partial \Omega$. In this neighborhood $\partial_r A$
meets $\partial \Omega$ orthogonally.
\item
$\partial_s A$ is a closed set of Hausdorff dimension at most n-8.
\end{itemize}
\end{thm}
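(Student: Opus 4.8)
\emph{Proof proposal.} The statement is a compendium of known results from Geometric Measure Theory, and the plan is to reduce every assertion to the standard regularity theory for codimension-one perimeter almost-minimizers and to its free-boundary analogue, indicating only the modifications needed to pass from Euclidean domains of constant density to the weighted Riemannian setting. The existence of a $\Psi$-weighted relative perimeter minimizer $A \subset \Omega$ of prescribed $\Psi$-weighted volume $v \in (0,1)$ has already been established in the discussion preceding the statement, using the local compactness theorem for $BV$ functions on an exhaustion of $M$ by geodesic balls, a diagonalization argument producing an $L^1_{loc}(\Psi)$-limit $A$, the hypothesis $\mu(\Omega) = 1 < \infty$ to upgrade this to $L^1(\Psi)$-convergence (so that $V_\Psi(A) = v$), and the lower-semicontinuity of $P_\Psi(\cdot,\Omega)$; I would simply recall it.

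The key structural point is that such a volume-constrained, $\Psi$-weighted, Riemannian minimizer $A$ is a $(\Lambda,r_0)$-\emph{almost-minimizer} of the unweighted Riemannian relative perimeter $P(\cdot,\Omega)$ near every point. Indeed, since $\log\Psi \in C^k \subset C^1$, on a metric ball of radius $r \le r_0$ the density is pinched between two constants differing by a factor $1 + O(r)$, so $A$ minimizes $P(\cdot,\Omega)$ up to a multiplicative gauge $1+Cr$; and the volume constraint can be restored by the usual local volume-adjusting deformation supported in a fixed ball disjoint from the competition region (Almgren; see \cite[Ch.~13]{MorganBook4Ed}), which converts the constraint into an additive error $\Lambda\,|V(A \triangle A')|$ for competitors $A'$ coinciding with $A$ outside a small ball. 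Passing to geodesic normal coordinates, in which $g$ is $C^\infty$-close to the Euclidean metric, $A$ becomes an almost-minimizer of Euclidean perimeter with gauge $\omega(r) = Cr$ --- precisely the class to which the regularity theory of \cite{DeGiorgiFirstRegularity,FedererRegularity,FedererBook,GMT,TamaniniExtendedExcessFuncAndHolderRegularity,GiustiBook,MorganBook4Ed} applies; the transplantation to manifolds-with-density is carried out in \cite{MorganRegularityOfMinimizers} (cf.\ \cite{AlmgrenMemoirs,BombieriRegularityTheory,BayleThesis}).

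Granting this, the interior assertions follow at once: the reduced boundary $\partial^* A \cap \Omega$ is a $C^{1,\alpha}$ embedded hypersurface, the set $\Sigma := (\overline{\partial A \cap \Omega}) \setminus \partial^* A$ is closed with $\H^{n-1}(\Sigma) = 0$, and Federer's dimension-reduction argument, combined with the Simons-type theorem that area-minimizing hypercones in $\Real^m$ are hyperplanes for $m \le 7$ (the first singular example being the Simons cone in $\Real^8$), yields that $\Sigma$ has Hausdorff dimension at most $n-8$. On the regular part, the first-variation formula of Remark \ref{rem:first-variation} applied under the volume constraint shows that the generalized mean curvature equals a constant Lagrange multiplier $H_\mu(A)$; writing $\partial_r A$ locally as a graph, this is a uniformly elliptic quasilinear equation whose coefficients are as regular as $\nabla\log\Psi \in C^{k-1}$, so Schauder iteration bootstraps the graph from $C^{1,\alpha}$ to $C^{k}$ (in fact $C^{k,\alpha}$). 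One then sets $\partial_r A$ to be the set of points near which $\partial_i A$ is such an embedded (interior or boundary) hypersurface, and $\partial_s A := \partial_i A \setminus \partial_r A$.

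For the behaviour at $\partial\Omega$: since $\mu$ is supported on $\overline{\Omega}$ and only the perimeter \emph{inside} $\Omega$ is counted, the natural free boundary condition along $\partial\Omega$ is orthogonality of $\partial_r A$ to $\partial\Omega$. Reflecting $A$ across the $C^2$ hypersurface $\partial\Omega$ keeps it in the almost-minimizer class, and Gr\"uter's free-boundary regularity theorem \cite{Gruter}, extended to almost-minimizers and to the weighted Riemannian setting (see \cite{MorganRegularityOfMinimizers,BayleThesis}), shows that near such a boundary point $\partial_r A$ is a $C^{1}$ (indeed $C^{1,\alpha}$) embedded hypersurface-with-boundary whose boundary lies in $\partial\Omega$ and meets it orthogonally, the boundary singular set again being of Hausdorff dimension at most $n-8$; this is the middle bullet. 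Finally, since $\partial_s A$ has Hausdorff dimension at most $n-8 < n-1$, the Minkowski content of $\partial_i A$ coincides with $\H^{n-1}$ of its regular part, whence $\mu^+(A) = P_\Psi(A,\Omega)$ for this minimizer --- the identity needed to reconcile the Minkowski and perimeter notions of boundary measure. The only genuinely non-routine points are the uniformity near $\partial\Omega$ in the almost-minimizer reduction and the reflection step underlying the free-boundary regularity; once the problem has been placed in the almost-minimizer framework everything else is standard, and I would refer to the cited literature for the details rather than reproduce the arguments here.
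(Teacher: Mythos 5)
Your proposal correctly identifies the standard mechanism (reduction to $(\Lambda,r_0)$-almost-minimizers of unweighted perimeter, interior regularity of De Giorgi--Federer--Almgren, Schauder bootstrap on the constant-generalized-mean-curvature equation, Gr\"uter's free-boundary theorem, dimension reduction with Simons' cone) and the relevant literature; this is exactly how the paper treats the theorem, which it does not prove but rather presents as a compilation from the cited sources, with the constancy of $H_\mu(A)$ addressed in the remark via a Lagrange-multiplier argument as you also note. Your sketch is a somewhat more explicit account of how those cited results combine, but it takes the same route as the paper.
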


\begin{rem}
It is known that the constant 8 above is sharp \cite{BombieriDeGiorgiGiusti,Gruter-OptimalRegularityForCodimensionOneMinimalSurfacesWithAFreeBoundary}.
Note that changing a set by zero $\H^n$-measure does not change its perimeter, and so the regularity results above ensure that we may replace $A$ by the open set $A \setminus \partial_i A$ without changing its $\Psi$-volume and $\Psi$-perimeter; we will subsequently always assume that our minimizer is an open set.
The fact that the generalized mean-curvature is constant on $\partial_r A \cap \Omega$ follows immediately by a Lagrange multiplier argument and Remark \ref{rem:first-variation}, since otherwise we could deform $\partial_r A \cap \Omega$ so that in the first order $\Psi$-weighted volume is preserved whereas $\Psi$-weighted perimeter is decreased, contradicting the minimality of $A$ (see e.g. \cite[Proposition 3.4.11]{BayleThesis} or \cite[Section 2]{EMilmanGeometricApproachPartI}).
\end{rem}

The following consequence is elementary (cf. \cite[pp. 32--33 and Appendix A]{BayleThesis}):
\begin{cor} \label{cor:boundary-measure}
With the same assumptions and notation as in Theorem \ref{thm:regularity}, we have:
\[
\mu^+(A) = P_{\Psi}(A,\Omega) = \int_{\partial_r A \cap \Omega} \Psi(x) dvol_{\partial_r A}(x) ~.
\]
Consequently, $A$ is an isoperimetric minimizer of $\mu$-measure $v$.
\end{cor}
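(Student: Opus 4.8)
The plan is to prove the two displayed equalities in turn and then read off the minimality assertion. Write $A$ for the open $\Psi$-weighted relative perimeter minimizer of $\Psi$-weighted volume $v$ supplied by Theorem \ref{thm:regularity} (and the ensuing remark), and set $\Sigma := \partial_r A \cap \Omega$ for its regular interior face. For the \emph{middle equality}, I would start from the Gauss--Green--De Giorgi--Federer representation (\ref{eq:gen-reduced-bdry}), $P_{\Psi}(A,\Omega) = \int_{\partial^* A \cap \Omega} \Psi \, d\H^{n-1}$, and compare the reduced boundary with $\Sigma$. Every point of $\Sigma$ has a neighbourhood in which $\partial A$ is a $C^k$ embedded hypersurface, hence carries a measure-theoretic normal, so $\Sigma \subseteq \partial^* A$; conversely $\partial^* A \cap \Omega \subseteq \partial A \cap \Omega \subseteq \partial_i A = \partial_r A \sqcup \partial_s A$, and $\partial_s A$, having Hausdorff dimension at most $n-8$, is $\H^{n-1}$-null (indeed empty when $n \leq 7$). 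Thus $\partial^* A \cap \Omega$ and $\Sigma$ coincide up to an $\H^{n-1}$-null set, and since $\H^{n-1}$ restricted to the $C^k$ hypersurface $\Sigma$ is the induced Riemannian area $dvol_{\partial_r A}$, we obtain $P_{\Psi}(A,\Omega) = \int_\Sigma \Psi \, dvol_{\partial_r A}$.

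It then remains to prove $\mu^+(A) \leq P_{\Psi}(A,\Omega)$, the reverse inequality being valid for arbitrary Borel sets as already recalled. Since $\mu$ is carried by $\overline\Omega$ and $A \subseteq A_\eps$, we have $\mu(A_\eps) - \mu(A) = \mu\big((A_\eps \setminus A) \cap \overline\Omega\big)$; as $A$ is open, $A_\eps \setminus A$ lies in the $\eps$-neighbourhood of the topological boundary $\partial A$ in $M$, which I would decompose into four pieces and bound the $\mu$-measure of the corresponding $\eps$-neighbourhoods (intersected with $\overline\Omega$): $(i)$ near a point of $\Sigma$ the set $A$ is locally bounded by the smooth hypersurface $\Sigma$ with outward normal $\nu$, so the relevant points lie in the one-sided tube $\set{\exp_x(t\nu(x)) ; x \in \Sigma,\ t \in [0,(1+o(1))\eps]}$, whose $\mu$-measure is, by Fermi coordinates along $\Sigma$ (normal exponential Jacobian $1 + O(t)$, $\Psi \in C^1(\overline\Omega)$) together with a compact exhaustion of $M$ and the finiteness of $P_{\Psi}(A,\Omega) = \int_\Sigma \Psi \, dvol_{\partial_r A}$ just established, at most $\eps \, P_{\Psi}(A,\Omega) + o(\eps)$; $(ii)$ the singular set $\partial_s A$ has Hausdorff dimension at most $n-8$, so its $\eps$-neighbourhood has $\mu$-measure $O(\eps^{8}) = o(\eps)$; $(iii)$ by the boundary regularity in Theorem \ref{thm:regularity} the free boundary $\partial_r A \cap \partial\Omega$ is locally the boundary of an $(n-1)$-dimensional manifold-with-boundary, hence of dimension at most $n-2$, so its $\eps$-neighbourhood has $\mu$-measure $O(\eps^2) = o(\eps)$; $(iv)$ any remaining point of $\partial A$ lies in $\partial\Omega$ but not in $\partial_i A$, and near such a point $A$ coincides locally with $\Omega$, so $(A_\eps \setminus A) \cap \overline\Omega$ is locally contained in $\partial\Omega$ and is $\mu$-null. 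Summing, $\mu(A_\eps) - \mu(A) \leq \eps\, P_{\Psi}(A,\Omega) + o(\eps)$, and dividing by $\eps$ and passing to the limit yields $\mu^+(A) \leq P_{\Psi}(A,\Omega)$.

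Finally, for the \emph{consequence}: given any Borel $B \subset M$ with $\mu(B) = v$, note $\mu(\partial\Omega) = 0$ and $P_{\Psi}(\cdot,\Omega)$ depends only on the trace on $\Omega$, so $V_{\Psi}(B \cap \Omega) = v$ and hence $P_{\Psi}(B,\Omega) = P_{\Psi}(B\cap\Omega,\Omega) \geq P_{\Psi}(A,\Omega)$ by minimality of $A$; combining with $\mu^+(B) \geq P_{\Psi}(B,\Omega)$ and the now-proved identity $\mu^+(A) = P_{\Psi}(A,\Omega)$ gives $\mu^+(B) \geq \mu^+(A)$, so $A$ realizes $\I_{(\Omega,d,\mu)}(v)$, i.e. it is an isoperimetric minimizer of $\mu$-measure $v$.

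I expect the only genuine work to be in the second paragraph: isolating the first-order contribution of the smooth interior face $\Sigma$ — which must be controlled uniformly, whence the compact-exhaustion argument, since $\Omega$ need not be precompact — and verifying that the lower-dimensional leftovers ($\partial_s A$ of codimension $\geq 8$, the free boundary of codimension $\geq 2$, and the portion of $\partial\Omega$ along which $A$ saturates $\Omega$) each contribute only $o(\eps)$. Everything else is bookkeeping with the regularity theorem of Theorem \ref{thm:regularity} and the divergence-form definition of $P_{\Psi}$; cf. \cite[pp.~32--33 and Appendix A]{BayleThesis}.
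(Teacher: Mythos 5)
Your argument is correct and follows essentially the same strategy as the paper's sketch: reduce the Minkowski outer boundary measure to a first-order analysis of the $\eps$-neighborhood of $\partial_i A$, show the regular interior face $\Sigma=\partial_r A\cap\Omega$ contributes exactly $\eps\int_\Sigma\Psi\,dvol_{\partial_r A}+o(\eps)$, dispose of the lower-dimensional pieces, and combine with the general inequality $\mu^+\geq P_\Psi$. The differences are organizational rather than conceptual. First, you prove the middle equality $P_\Psi(A,\Omega)=\int_\Sigma\Psi\,dvol_{\partial_r A}$ head-on by matching $\partial^*A\cap\Omega$ with $\Sigma$ up to $\H^{n-1}$-null sets; the paper instead extracts the same identity as a byproduct of squeezing $\mu^+(A)$ between $\int_\Sigma\Psi$ and $P_\Psi(A,\Omega)$. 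Your route is a bit more transparent. Second, you split the boundary into four pieces and treat the free boundary $\partial_r A\cap\partial\Omega$ (codimension~$2$) and the residual contact set in $\partial\Omega\setminus\partial_i A$ (where $A$ locally saturates $\Omega$ and the $\eps$-collar lies inside the $\mu$-null set $\partial\Omega$) explicitly; the paper folds these into the single asserted identity $\lim_{\eps\to 0}\mu((\partial_r A)_\eps\setminus A)/\eps=\lim_{\eps\to 0}\mu((\partial_r A\cap\Omega)_\eps\setminus A)/\eps$ and refers to Bayle/Chavel for it. Your more granular decomposition makes the argument self-contained, at the cost of length. One small caveat, present in the paper's sketch as well: to conclude that the $\eps$-collar of $\partial_s A$ has measure $o(\eps)$ one needs a Minkowski-type content bound, which does not formally follow from Hausdorff dimension $\leq n-8$ alone; in the regularity theory this upgrade is available and the paper glosses it the same way, so this is not a defect of your proposal. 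Your proof of the final minimality assertion (combine $\mu^+(B)\geq P_\Psi(B,\Omega)\geq P_\Psi(A,\Omega)=\mu^+(A)$) is correct and fills in a step the paper leaves implicit.
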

\begin{proof}[Proof sketch]
We will sketch the proof when $\overline{\Omega}$ is compact, otherwise we may exhaust $\Omega$ by compact sets and use the fact the total measure is finite. It follows in the compact case that when $\eps>0$ is small enough:
\[
\frac{\mu( (\partial_r A)_\eps \setminus A ) }{\eps} \leq \frac{\mu(A_\eps) - \mu(A)}{\eps} = \frac{\mu( (\partial_i A)_\eps \setminus A ) }{\eps} \leq \frac{\mu( (\partial_r A)_\eps \setminus A ) }{\eps} + \frac{\mu( (\partial_s A)_\eps \setminus A ) }{\eps} ~.
\]
For the regular part of the boundary, it is elementary (see e.g. \cite[Remark III.2.3]{Chavel-IsopBook} or \cite[pp. 32--33]{BayleThesis}) to verify that:
\[
\lim_{\eps \rightarrow 0} \frac{\mu( (\partial_r A)_\eps \setminus A ) }{\eps} = \lim_{\eps \rightarrow 0} \frac{\mu( (\partial_r A \cap \Omega)_\eps \setminus A ) }{\eps} = \int_{\partial_r A \cap \Omega} \Psi(x) dvol_{\partial_r A}(x) \leq P_\Psi(A,\Omega) ~,
\]
where the last inequality follows from (\ref{eq:gen-reduced-bdry}).
Since always $\mu^+(A) \geq P_{\Psi}(A,\Omega)$, we will obtain an equality if we show that:
\[
\lim_{\eps \rightarrow 0} \frac{\mu( (\partial_s A)_\eps \setminus A ) }{\eps} \leq \lim_{\eps \rightarrow 0} \frac{\mu( (\partial_s A)_\eps \setminus \partial_s A ) }{\eps} =  0
\]
(note that the last inequality follows since $\mu(\partial_i A) = 0$ thanks to the regularity of $A$). But the latter requirement is an immediate consequence of the low Hausdorff dimension of $\partial_s A$ and the boundedness of $\Psi$ and the geometry of $M$ on compact sets. This concludes the proof.
\end{proof}

\subsection{Oriented Tangent Cones}

Furthermore, we will require the following information on the existence and minimization properties of oriented tangent cones to $A$.
In addition to the already mentioned references above, we refer to \cite{GruterBoundaryOfArbitraryCoDimension,GruterJostRegularityofAllardTypeOnFreeBoundary} for further information on properties of oriented tangent cones at the boundary of $\Omega$. Here and elsewhere, we equip $T_x M$ with its natural Euclidean metric $g_x$, and denote by $B^n$ and $S^{n-1}$ the open unit ball and sphere in $T_x M$, respectively.

\begin{dfn*}
We say that $C^+ \subset T_x M$ is an \emph{oriented tangent cone} to an $n$-dimensional Borel set $\Sigma \subset M$ at $x \in M$, if the following conditions are satisfied:
\begin{itemize}
\item $C^+ \subset T_x M$ is an oriented cone, i.e. $t C^+ = C^+$ for all $t > 0$.
\item Given $\eps > 0$ smaller than the injectivity radius at $x$, define $B := \exp_x^{-1}(B_x(\eps) \cap \Sigma) \subset T_x M$, where $B_x(\eps)$ is a geodesic ball of radius $\eps$ centered at $x$. There exists a sequence $\set{t_k}$ of positive reals tending to $0$, so that $1_{B/t_k}$ tends to $1_{C^+}$ in $L^1_{loc}(T_x M)$.
\end{itemize}
\end{dfn*}

\begin{dfn*}
For a point $x \in \overline{\Omega}$, we denote by $T_x \Omega \subset T_x M$ the open oriented tangent cone to $\Omega$. In other words:
\[
T_x \Omega := \begin{cases} T_x M & x \in \Omega \\  \set{v \in T_x M ; g_x(v,n_x) > 0} & x \in \partial \Omega \end{cases} ~,
\]
where $n_x$ denotes the inward pointing unit normal vector to $\partial \Omega$ at $x$.
\end{dfn*}

\begin{dfn*}
Given an oriented cone $C^+ \subset T_x M$, we define its boundary relative to $\Omega$ as $\partial_{\Omega} C^+ := \overline{\partial C^+ \cap T_x \Omega}$. The latter's density is defined as:
\[
\Theta(\partial_{\Omega} C^+) := \frac{\H^{n-1}(\partial_{\Omega} C^+ \cap B^n)}{\H^{n-1}(E \cap T_x \Omega \cap B^{n})} ~,
\]
where $E \subset T_x M$ is any hyperplane through the origin orthogonal to $\partial T_x \Omega$.
\end{dfn*}

\begin{thm}[De Giorgi, Federer, Giusti, Gonzales--Massari--Tamanini, Gr\"{u}ter, Morgan]
With the same assumptions and notation as in Theorem \ref{thm:regularity}:
\begin{itemize}
\item
At every point $x \in \partial_i A$, $A$ has a closed oriented tangent cone $C_x^+ \subset T_x M$.
\item
$\partial_\Omega C_x^+$ is an $n-1$ dimensional rectifiable set with $1 \leq \Theta(\partial_\Omega C_x^+) < \infty$.
\item
$C_x^+$ is perimeter-minimizing in $T_x \Omega$:
for any bounded open $\Sigma \subset T_x \Omega$ and Borel competitor $E \subset T_x \Omega$ so that $\overline{C_x^+ \triangle E} \cap T_x \Omega \subset \Sigma$, $P(E,\Sigma) \geq P(C_x^+,\Sigma)$.
\end{itemize}
\end{thm}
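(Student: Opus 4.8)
The statement is the standard Geometric Measure Theory package on blow-up limits of perimeter minimizers, and the plan is to reduce everything to the flat Euclidean theory via the exponential map, carefully tracking how the metric $g$ and the weight $\Psi$ behave under rescaling. First I would fix $x \in \partial_i A$, pick $\eps$ below the injectivity radius at $x$, and transport the minimizer to $T_x M$ via $B := \exp_x^{-1}(B_x(\eps) \cap A)$; in these coordinates $g$ and $\Psi$ are $C^2$ (resp. $C^k$) and agree to first order with the Euclidean metric $g_x$ and a constant, so $B$ minimizes a weighted perimeter functional that is a smooth perturbation of the flat one. The analytic engine is the almost-monotonicity formula for such minimizers: $r \mapsto e^{\Lambda r} r^{-(n-1)} P_\Psi(A,B_x(r))$ is non-decreasing for small $r$, with $\Lambda$ depending only on the local geometry and $\log \Psi$ --- this is the computation carried out in the manifold-with-density setting by Morgan \cite{MorganRegularityOfMinimizers} (and, at a point of $\partial \Omega$, with a free boundary, by Gr\"uter \cite{GruterBoundaryOfArbitraryCoDimension,GruterJostRegularityofAllardTypeOnFreeBoundary}). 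It produces a uniform upper bound on $P(B/t,B^n)$ for all small $t>0$, so the $BV$ compactness theorem \cite[Theorem 1.19]{GiustiBook} yields a sequence $t_k \downarrow 0$ with $1_{B/t_k} \to 1_{C^+}$ in $L^1_{loc}(T_x M)$. Since the limiting density ratio in $r$ is forced to be constant by monotonicity, $C^+$ is dilation-invariant, i.e. an oriented cone, and closed after the usual normalization of the representative.

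Next I would argue that $C^+$ is perimeter-minimizing in $T_x \Omega$. Here the scaling works in our favour: under $y \mapsto y/t_k$ the pulled-back metric converges in $C^1_{loc}$ to $g_x$ and $\Psi$ to a constant, so the weighted relative perimeters on the rescaled pictures converge to the flat relative perimeter $P(\cdot,T_x \Omega)$, and rescaled minimizers converge to the blow-up limit. Given a bounded open $\Sigma \subset T_x \Omega$ and a competitor $E$ with $\overline{C^+ \triangle E} \cap T_x \Omega \subset \Sigma$, one transplants $E$ back to $B/t_k$ by a cut-and-paste inside $\Sigma$, invokes the minimality of $B$ in $B_x(\eps)$, and passes to the limit using lower semicontinuity of $P$ on the $C^+$ side and convergence of the perimeters of the modified minimizers on the $E$ side. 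This is the classical De Giorgi--Federer--Giusti argument, whose transplantation to the Riemannian-with-density case is \cite[Remark 3.10]{MorganRegularityOfMinimizers}; at a boundary point $x \in \partial \Omega$ one additionally uses that $\exp_x^{-1}(\Omega)$ blows up to the half-space $T_x \Omega$ (which follows from $\partial \Omega \in C^2$), together with the orthogonality and free-boundary monotonicity from Theorem \ref{thm:regularity} and \cite{Gruter,GruterBoundaryOfArbitraryCoDimension,GruterJostRegularityofAllardTypeOnFreeBoundary}.

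Finally, for the structure of $\partial_\Omega C^+$: as a set of locally finite perimeter, $C^+$ has, by De Giorgi's structure theorem, an $(n-1)$-rectifiable reduced boundary carrying the full perimeter measure; intersecting with $T_x \Omega$ --- whose boundary is the smooth hyperplane $\partial T_x \Omega$ --- shows that $\partial_\Omega C^+$ is $(n-1)$-rectifiable. The lower bound $\Theta(\partial_\Omega C^+) \geq 1$ is the limiting form of the monotonicity lower bound, expressing that the density of a minimizing cone at its apex is at least that of a hyperplane through the origin; the finiteness $\Theta(\partial_\Omega C^+) < \infty$ follows from the uniform upper perimeter bound obtained in the first step.

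The hard part is not formal: it is the monotonicity formula and the preservation of minimality under blow-up in the weighted Riemannian setting, and in particular at points of $\partial \Omega$, where one works with a free boundary and must control the blow-up of $\Omega$ itself. However, all of this is precisely what is established in \cite{MorganRegularityOfMinimizers,BayleThesis} and, for the boundary behaviour, in \cite{GruterBoundaryOfArbitraryCoDimension,GruterJostRegularityofAllardTypeOnFreeBoundary}, so the proof consists of quoting and lightly adapting these sources.
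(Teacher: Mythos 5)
The paper offers no proof of this theorem: it is presented as a quoted package of known Geometric Measure Theory facts, with the manifold-with-density extension attributed to Morgan \cite{MorganRegularityOfMinimizers} and the free-boundary behaviour to Gr\"uter and Gr\"uter--Jost \cite{GruterBoundaryOfArbitraryCoDimension,GruterJostRegularityofAllardTypeOnFreeBoundary} --- exactly the sources you invoke. Your sketch is a fair outline of what those references establish: normal coordinates reduce the weighted Riemannian problem to a small perturbation of the flat weighted one; almost-monotonicity yields uniform perimeter bounds on the rescalings, so $BV$-compactness produces a blow-up limit; stabilization of the density ratio forces the limit to be a cone; lower semicontinuity together with cut-and-paste gives minimality; and the monotonicity lower bound gives $\Theta \ge 1$ while the uniform upper perimeter bound gives $\Theta < \infty$.

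One step you gloss over, and it is the only one that is not merely formal, is the passage from the \emph{volume-constrained} minimality of $A$ to the \emph{unconstrained} perimeter-minimality of $C_x^+$. Cutting-and-pasting a competitor $E$ into $B/t_k$ changes the enclosed volume, so ``invoking the minimality of $B$'' is not directly available. The standard repair is to show first that an isoperimetric minimizer is a $(\Lambda,r_0)$-almost-minimizer of weighted relative perimeter (restore the volume by a fixed deformation supported away from the ball of interest, at cost $\Lambda |V_\Psi(E)-V_\Psi(A)|$), and then to observe that this penalty scales as $t^n$ against the perimeter's $t^{n-1}$, hence vanishes in the blow-up; that is the mechanism by which the tangent cone becomes an unconstrained minimizer. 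This is carried out in \cite{GMT} and adapted to the weighted/Riemannian setting in \cite{MorganRegularityOfMinimizers,BayleThesis}, and should be named explicitly rather than absorbed into ``the classical De Giorgi--Federer--Giusti argument.'' With that clarification your outline is consistent with the paper's intended citations.
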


\begin{rem} \label{rem:hyperplane}
The regular part $\partial_r A$ of the boundary $\partial_i A$ is precisely characterized as the collection of those points $x \in \partial_i A$ for which $\partial_\Omega C_x$ is a hyperplane ($x \in \Omega$) or half hyperplane ($x \in \partial \Omega$).
\end{rem}

\begin{rem}
In some of the above mentioned references, the results above are demonstrated for the oriented tangent cone of the (locally) integral current associated to $A$, which is by itself a (locally) integral current. However, it is known (e.g. \cite[p. 110]{MorganBook4Ed}) that the support of the boundary of such a current and the reduced boundary of its support coincide up to zero $\H^{n-1}$-measure, so ultimately it does not matter which definition of tangent cone one uses.
\end{rem}

\subsection{Any closest point on a minimizer's boundary is regular}

We are now ready to state the following crucial proposition which extends a fundamental observation of Gromov \cite{GromovGeneralizationOfLevy}, who addressed the case when the density is constant and $\Omega$ has no boundary. Recall that a set $\Omega \subset (M,g)$ is called geodesically convex if between any two points $x,y \in \Omega$ there exists a distance minimizing geodesic (not necessarily unique) which lies entirely in $\Omega$.

\begin{prop} \label{prop:closest-point}
Assume in addition to the assumptions of Theorem \ref{thm:regularity} that $\Omega$ is geodesically convex. Proceeding with the same notation as in that theorem, let $A$ denote an isoperimetric minimizer. Then for any $p \in \Omega \setminus \partial_i A$, any closest point to $p$ in $\partial_i A$ must lie in $\partial_r A \cap \Omega$.
\end{prop}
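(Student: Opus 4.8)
The strategy is to argue by contradiction: suppose $p \in \Omega \setminus \partial_i A$ and let $x$ be a closest point to $p$ in $\partial_i A$, and assume $x \notin \partial_r A \cap \Omega$. By Theorem~\ref{thm:regularity}, this means either $x \in \partial \Omega$ (possibly regular-on-the-boundary) or $x \in \partial_s A$, i.e. $x$ is a genuine interior singular point. In either case I will produce a competitor contradicting the minimality of $A$, by exploiting a geodesic ball centered at $p$ that is tangent to $\partial_i A$ at $x$.

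The key steps, in order, are as follows. First, set $r := d(p,x) = d(p,\partial_i A) > 0$ and let $B := B_p(r)$ be the open geodesic ball; since $x$ is a closest point, $B \cap \partial_i A = \emptyset$, so $B$ lies entirely in $A$ or entirely in $\Omega \setminus \overline{A}$ (here I use that $\partial_i A = \overline{\partial A \cap \Omega}$ and that $A$ is open; if $B$ met both $A$ and its complement it would have to meet $\partial A \cap \Omega$). Second, pass to the oriented tangent cone: dilating around $x$, the sphere $\partial B$ becomes (in $T_x M$, using $\exp_x$) a hyperplane $E$ through the origin, on one side of which the tangent cone $C_x^+$ to $A$ must lie — more precisely $\overline{C_x^+}$ is contained in one of the closed half-spaces bounded by $E$, since $B$ is on one side of $\partial_i A$ near $x$. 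Third, invoke the fact (from the tangent-cone theorem) that $C_x^+$ is perimeter-minimizing in $T_x \Omega$. When $x \in \Omega$, $T_x\Omega = T_x M$, and a perimeter-minimizing cone contained in a half-space whose bounding hyperplane passes through the cone's vertex must itself be that half-space (compare $C_x^+$ with the half-space $E^+ \supset C_x^+$ inside a large ball: $E^+$ has no perimeter-carrying singular set through the origin and strictly less perimeter than any proper sub-cone, unless $C_x^+ = E^+$); hence $\partial_\Omega C_x^+ = E$ is a hyperplane, so by Remark~\ref{rem:hyperplane} $x \in \partial_r A$ — contradicting $x \in \partial_s A$. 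When $x \in \partial\Omega$, I use geodesic convexity of $\Omega$: the half-space $T_x\Omega$ and the half-space $E^+$ both contain $C_x^+$; geodesic convexity guarantees the tangent ball picture is compatible with $\partial T_x\Omega$ (the ball $B\subset\Omega$, so in the blow-up $E^+ \cap T_x\Omega$ is the relevant competitor), and the same comparison forces $\partial_\Omega C_x^+$ to be a half-hyperplane, so again $x \in \partial_r A$ by Remark~\ref{rem:hyperplane}, contradicting the assumption.

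The main obstacle I anticipate is the boundary case $x \in \partial\Omega$: one must be careful that the blow-up of the tangent ball $B$ interacts correctly with the blow-up $T_x\Omega$ of $\Omega$, and that the competitor $E^+ \cap T_x\Omega$ is admissible in the perimeter-minimization statement for $C_x^+$ in $T_x\Omega$. This is exactly where geodesic convexity of $\Omega$ enters — without it, the distance ball $B$ from $p\in\Omega$ need not remain inside $\Omega$, and indeed the conclusion can fail near a reentrant boundary point. A secondary technical point is justifying rigorously that a perimeter-minimizing cone in a half-space (or in all of $T_xM$), sandwiched inside a half-space through its vertex, must coincide with that half-space; this should follow from a direct perimeter comparison inside a ball $B^n$, using that $\Theta(\partial_\Omega C_x^+)\geq 1$ and strict inequality of areas unless the cone is flat, but the clean way is to note that any such cone, being on one side of a hyperplane through its vertex and minimizing, has density exactly $1$ and hence is a half-space by the classification of minimizing cones of density one (equivalently, by the monotonicity formula and the fact that the half-space is the unique minimizer among one-sided cones). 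I would keep the write-up at the level of this comparison, citing the regularity and tangent-cone theorems already quoted, rather than redeveloping the monotonicity formula.
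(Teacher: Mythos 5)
Your interior argument is essentially sound and is a genuine alternative to the paper's: instead of the first-variation/center-of-gravity Lemma~\ref{lem:gravity}, you blow up the tangent ball and invoke the classification of one-sided area-minimizing cones (that a perimeter-minimizing cone contained in a closed half-space through its vertex is a half-space). That classification is a real GMT theorem and not as elementary as your "compare $C_x^+$ with $E^+$ inside a large ball" suggests -- $E^+$ is \emph{not} an admissible competitor, since $\overline{C_x^+ \triangle E^+}$ is an unbounded cone and hence not compactly contained in any bounded $\Sigma$; you would have to glue $E^+$ to $C_x^+$ near $\partial \Sigma$ and control the gluing perimeter, or appeal directly to the density-$1$/Allard-type statement. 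By contrast, the paper proves the needed fact from scratch via Lemma~\ref{lem:gravity}: the center of gravity of $\partial_\Omega C_x^+ \cap B^n$ vanishes, which together with $C_x^+$ being on one side of a hyperplane through $0$ immediately forces $\partial_\Omega C_x^+$ to lie in that hyperplane. That is a cleaner and self-contained route.

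However, your boundary case has a genuine gap. The assumption you want to contradict is $x \notin \partial_r A \cap \Omega$, and a point $x \in \partial_r A \cap \partial\Omega$ (a \emph{regular} free-boundary point) is perfectly consistent with that assumption. Your argument, even if correct, only establishes that $\partial_\Omega C_x^+$ is a half-hyperplane, i.e.\ that $x$ is regular -- which is not a contradiction when $x \in \partial\Omega$. Moreover, when $g_x(v,n_x) > 0$ the blow-up of $B$ intersected with $T_x\Omega$ is a \emph{wedge} (intersection of two non-parallel half-spaces), not a half-space of $T_x\Omega$, so the reduction to "minimizing cone in a half-space $\Rightarrow$ half-hyperplane" does not apply as stated. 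The paper handles exactly this case by using the \emph{strict} positivity of the $n_x$-component of the center of gravity from Lemma~\ref{lem:gravity}: if $g_x(v,n_x) > 0$, then $g_x(p_x,v) > 0$, but $p_x$ is an average of points with $g_x(\theta,v) \le 0$, a contradiction. And the residual tangential case $g_x(v,n_x) = 0$ is excluded using Bishop's result that geodesics tangent to the $C^2$ boundary of a geodesically convex domain must locally leave the domain. Neither of these steps appears in your proposal, and without them the boundary case is open.
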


In other words, the claim is that the outward pointing normal rays (with respect to $A$) emanating from $\partial_r A \cap \Omega$ sweep out the entire $\Omega \setminus \overline{A}$, and similarly the open $A$ is swept out by the inward pointing normal rays. For the proof, we require the following lemma, which is an easy corollary of the tangent cones' minimizing properties. Since we could not find a reference (at least for the second part), we include a proof for completeness:
\begin{lem} \label{lem:gravity}
With the same assumptions and notation as in Theorem \ref{thm:regularity}, let $C^+_x$ denote a closed oriented tangent cone to $A$ at $x \in \partial_i A$, and let $p_x$ denote the center of gravity of $\partial_\Omega C^+_x$, defined as:
\[
p_x := \int_{\partial_\Omega C^+_x \cap B^n} \theta \; d\H^{n-1}(\theta) ~.
\]
\begin{itemize}
\item If $x \in \Omega$ then $p_x = 0$.
\item If $x \in \partial \Omega$ then $p_x$ is a positive multiple of $n_x$, the inward unit normal vector to $\partial \Omega$ at $x$.
\end{itemize}
\end{lem}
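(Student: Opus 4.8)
The plan is to extract the balancing of $p_x$ from the fact, recorded in the preceding theorem, that $C^+_x$ is perimeter-minimizing in $T_x\Omega$, and is therefore in particular \emph{stationary}: its first variation of relative perimeter in $T_x\Omega$ vanishes under the flow of any compactly supported vector field on $\overline{T_x\Omega}$ that is tangent to $\partial T_x\Omega$ (so that its flow preserves the half-space). I would first reduce $p_x$ to a first moment over the link $\Gamma := \partial_\Omega C^+_x \cap S^{n-1}$. Since $C^+_x$, hence $\partial_\Omega C^+_x$, is a cone, $\partial_\Omega C^+_x \cap B^n$ is the cone over the $(n-2)$-rectifiable set $\Gamma$, and polar coordinates $y = r\omega$ ($r \in (0,1)$, $\omega \in \Gamma$), with $d\H^{n-1}(y) = r^{n-2}\,dr\,d\H^{n-2}(\omega)$, give
\[
p_x = \Big(\int_0^1 r^{n-1}\,dr\Big)\int_\Gamma \omega\,d\H^{n-2}(\omega) = \frac1n \int_\Gamma \omega\,d\H^{n-2}(\omega) ~,
\]
so it suffices to show the vector $v_x := \int_\Gamma \omega\,d\H^{n-2}(\omega)$ vanishes when $x \in \Omega$, and is a positive multiple of $n_x$ when $x \in \partial\Omega$.

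Next I would test stationarity against translational fields localized at the apex. Fix a smooth cutoff $\chi : [0,\infty) \to [0,1]$ with $\chi \equiv 1$ near $0$ and $\operatorname{supp}\chi \subset [0,1)$, and for a constant vector $e \in T_x M$ set $X(y) := \chi(\abs{y})\, e$ on $B^n$; when $x \in \Omega$ any $e$ is allowed, while when $x \in \partial\Omega$ I take $e \in (n_x)^\perp = \partial T_x\Omega$, so that $X$ is tangent to the wall and thus admissible. Since $e$ is constant, the tangential divergence of $X$ along $\partial_\Omega C^+_x$ at $y$ equals $\chi'(\abs{y})\, \langle \pi_T(y/\abs{y}), e\rangle$, where $\langle\cdot,\cdot\rangle = g_x(\cdot,\cdot)$ and $\pi_T$ is the projection onto the approximate tangent plane; because $C^+_x$ is a cone, $y/\abs{y}$ lies in that tangent plane for $\H^{n-1}$-a.e. $y \in \partial_\Omega C^+_x$, so this is simply $\chi'(\abs{y})\langle y/\abs{y}, e\rangle$. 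Plugging into the stationarity identity $0 = \int_{\partial_\Omega C^+_x \cap B^n} \operatorname{div}_{\partial_\Omega C^+_x} X \, d\H^{n-1}$ and using the same polar decomposition yields
\[
0 = \Big(\int_0^1 \chi'(r) r^{n-2}\,dr\Big)\, \langle v_x, e\rangle ~.
\]
A one-line integration by parts shows $\chi$ can be chosen with $\int_0^1 \chi'(r) r^{n-2}\,dr \neq 0$ (for $n = 2$ the integral is just $\chi(1) - \chi(0) = -1$), hence $\langle v_x, e\rangle = 0$ for every admissible $e$. When $x \in \Omega$ this gives $v_x = 0$, so $p_x = 0$; when $x \in \partial\Omega$ it gives $v_x \perp (n_x)^\perp$, so $p_x$ is a (possibly zero) multiple of $n_x$.

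To fix the sign in the boundary case, note that $\partial_\Omega C^+_x \subset \overline{T_x\Omega}$ forces $\langle \omega, n_x\rangle \geq 0$ for every $\omega \in \Gamma$, so $\langle p_x, n_x\rangle = \frac1n \int_\Gamma \langle\omega, n_x\rangle\,d\H^{n-2}(\omega) \geq 0$; and this is \emph{strict} because $\partial C^+_x \cap T_x\Omega$ has positive $\H^{n-1}$-measure inside $B^n$ — otherwise $C^+_x$ would have zero relative perimeter in $T_x\Omega$, hence $\partial_\Omega C^+_x = \emptyset$, contradicting $\Theta(\partial_\Omega C^+_x) \geq 1$ — so a positive-$\H^{n-2}$-measure portion of $\Gamma$ lies strictly inside the open half-space, where $\langle\omega, n_x\rangle > 0$. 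Thus $p_x$ is a positive multiple of $n_x$.

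The main obstacle is not the coning computation but the foundational input invoked in the boundary case: one must use the correct \emph{free-boundary} first-variation/stationarity formula for the possibly-singular minimizing cone $C^+_x$ in the half-space $T_x\Omega$, being careful that exactly the vector fields tangent to $\partial T_x\Omega$ are admissible, that the flows of the chosen $X$ indeed produce valid competitors confined to $T_x\Omega$, and that the formula is insensitive to the low-dimensional singular set of $C^+_x$. Given this standard input from free-boundary Geometric Measure Theory, the rest is the elementary argument above.
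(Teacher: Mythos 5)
Your proof is correct, and it takes a genuinely different (though philosophically parallel) route from the paper's. The paper constructs, for each candidate apex $v$, the explicit competitor cone $K_v^+$ over the same link $B^+ = C^+_x \cap S^{n-1}$, derives a closed-form ``cone formula'' for $\H^{n-1}(K_v)$ via the Gauss--Green theorem, and then differentiates that formula in $v$ at $v=0$; minimality makes the derivative vanish in every admissible direction $\xi$, yielding $\int_B \langle\theta,\xi\rangle\,d\H^{n-2}(\theta)=0$. You instead test the free-boundary stationarity of $C_x^+$ against radially cut-off translational fields $X(y)=\chi(|y|)e$ and decompose the resulting integral in polar coordinates, which lands on the same quantity. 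Neither is uniformly simpler: the paper's approach avoids invoking the abstract first-variation formula for a (possibly singular) perimeter-minimizer, at the cost of deriving and differentiating a somewhat intricate cone formula; yours is leaner once the stationarity identity is granted, and it makes explicit the polar-coordinate identity $p_x = \frac{1}{n}\int_\Gamma \omega\,d\H^{n-2}(\omega)$ that the paper's proof glosses over when passing between the ball-integral defining $p_x$ and the link-integral it actually shows vanishes. Your sign argument in the boundary case (that $\langle\omega,n_x\rangle\geq 0$ on $\Gamma$, with strict inequality on a set of positive $\H^{n-2}$-measure because $\Theta\geq 1$ forces positive relative perimeter inside the open half-space) also spells out carefully what the paper dispenses with in the phrase ``since the density of $C$ is positive.'' The foundational caveat you flag at the end --- that one must use the correct free-boundary variational identity for admissible fields tangent to $\partial T_x\Omega$, insensitive to the singular set and to any wall-concentrated portion of the boundary --- is shared by both arguments; the paper meets the same point implicitly when identifying $P(K_v^+,B^n\cap T_x\Omega)$ with $\H^{n-1}(K_v)$.
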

\begin{proof}
Denote for short $C^+ = C_x^+$ and $C = \partial_{\Omega} C_x^+$, and set $B^+ := C^+ \cap S^{n-1}$ and $B := C \cap S^{n-1}$. It is known that $B^+$ and $B$ are $n-1$ and $n-2$ dimensional rectifiable closed sets, respectively.
Denote by $B^* \subset B$ the part in $B$ of the reduced boundary of $B^+$ in $S^{n-1}$, having a well-defined outward normal to $B^+$ at every $\theta \in B^*$, which we denote by $n_{S^{n-1},B}(\theta) \in T_{\theta} S^{n-1}$. Denote by $K_v^\infty$ the oriented cone over $B$ with vertex $v \in B^n$, and set $K_v = K_v^\infty \cap \overline{B^n}$ i.e. $K_v = \cup_{b \in B} [v,b]$. Similarly, denote by $K_v^+$ the oriented cone in $\overline{B^n}$ over $B^+$ with vertex $v$.
We claim that for $v \in B^n$:
\begin{equation} \label{eq:cone-formula}
\H^{n-1}(K_v) = \frac{1}{n-1} \int_{B^*} \sqrt{ (1-\scalar{v,\theta})^2 + \scalar{v,n_{S^{n-1},B}(\theta)}^2} d\H^{n-2}(\theta) ~.
\end{equation}
Indeed, by the Gauss--Green--De Giorgi--Federer theorem (\cite[4.5.6]{FedererBook}, \cite[Chapter 12]{MorganBook4Ed}), we have:
\[
\H^{n-1}(K_v) = \frac{1}{n-1} \int_{K_v} div(w-v) d\H^{n-1}(w) = \frac{1}{n-1} \int_{B_v^*} \scalar{\theta-v,n_{K_v,B}(\theta)} d\H^{n-2}(\theta) ~,
\]
where $B_v^*$ denotes the reduced boundary of $K_v$ inside $K_v^\infty$,
and $n_{K_v,B}(\theta)$ denotes the outward normal to $K_v$ at $\theta \in B^*_v$ in the cone $K_v^\infty$. It is easy to see that $B_v^* = B^*$ 
and that given $\theta \in B^*$ and denoting $E = span\set{n_{S^{n-1},B}(\theta),\theta}$, that $n_{K_v,B}(\theta) = P_{E}(\theta-v) / |P_E(\theta-v)|$ (where $P_E$ denotes orthogonal projection onto $E$). Therefore  $\abs{\scalar{n_{K_v,B}(\theta),\theta-v}} = \abs{P_E(\theta-v)}$, and as $P_E(\theta-v) = \theta - \scalar{v,\theta} \theta - \scalar{v,n_{S^{n-1},B}(\theta)} n_{S^{n-1},B}(\theta)$, (\ref{eq:cone-formula}) follows.

Differentiating (\ref{eq:cone-formula}) in $v$, one verifies that the first variation of $v \mapsto \H^{n-1}(K_v)$ in the direction of $\xi \in T_x M$ at $v=0$ (we identify $T_0 T_x M$ with $T_x M$) is:
\[
- \frac{1}{n-1} \int_{B^*} \scalar{\theta,\xi} d\H^{n-2}(\theta) = - \frac{1}{n-1} \int_{B} \scalar{\theta,\xi} d\H^{n-2}(\theta)
\]
(the last equality follows since $\H^{n-2}(B \setminus B^*) = 0$ by the classical regularity results for area-minimizing sets of finite perimeter).

When $x \in \Omega$, note that $P(K_v^+,B^n) = \H^{n-1}(K_v)$ for all $v \in B^n$ and that $K_0^+ = C^+$. Since $C^+$ is perimeter minimizing among all competitors in $2 B^n$, it follows that the first variation must be $0$ for all $\xi \in T_x M$, and so the center of gravity must be at the origin.

When $x \in \partial \Omega$, note that $P(K_v^+,B^n \cap T_x \Omega) = \H^{n-1}(K_v)$ for all $v \in B^n \cap \partial T_x \Omega$ and that $K_0^+ = C^+$. It follows as above that the first variation must be $0$ for all $\xi \in \partial T_x \Omega$, and so the center of gravity must lie on the ray spanned by $n_x$. Moreover, since the density of $C$ is positive, this must be a strictly positive multiple. This concludes the proof.

\end{proof}

\begin{proof}[Proof of Proposition \ref{prop:closest-point}]
Given a minimizer $A$ and a point $p \in \Omega \setminus \partial_i A$, let $x \in \partial_i A$ denote a closest point to $p$. We first claim that $x \notin \partial \Omega$. Arguing in the contrapositive, let $n_x \in T_x M$ denote the inward normal to $\partial \Omega$ at $x \in \partial \Omega$. Since $\Omega$ is geodesically convex then so is $\overline{\Omega}$ (this easily follows from the Arzel\`a-Ascoli theorem, see e.g. the proof of \cite[Proposition 2.5.3]{Papadopoulos-MetricSpacesConvexityAndCurvatureBook}). It follows that there exists a distance minimizing geodesic $[0,a] \in t \mapsto \gamma(t) = \exp_x(t v)$ between $x$ and $p$ which lies inside $\overline{\Omega}$ (e.g. for some $v \in T_x M$ with $|v|=1$). Clearly $g_x(v,n_x) \geq 0$, since otherwise $\gamma(t) \notin \overline{\Omega}$ for $t \in (0,\eps)$ for some small $\eps > 0$. If $g_x(v,n_x) = 0$ this means that the geodesic is tangent to $\partial \Omega$, and since they are both closed sets, defining $t_0 = \inf \set{t > 0 ; \gamma(t) \notin \partial \Omega }$, it follows that $x_0 = \gamma(t_0) \in \partial \Omega$ and that $\gamma$ is still tangent to $\partial \Omega$ at $x_0$. By a result of Bishop \cite{BishopInBayleRosales}, the geodesic convexity of a domain with $C^2$ smooth boundary implies that geodesics tangent to the boundary must be locally outside the domain, and so for some small enough $\eps > 0$, $\set{\gamma(t)}_{t \in (t_0,t_0+\eps]}$ must be outside $\Omega$, outside $\partial \Omega$, and inside $\overline{\Omega}$, a contradiction. It remains to exclude the case that $g_x(v,n_x) > 0$. Note that $C^+_x \subset \set{ y \in T_x M ; g_x(y,v) \leq 0 }$, where $C^+_x$ denotes a closed oriented tangent cone to $A$ at $x$, since otherwise we could shorten the distance from $p$ to $A$ by the first variation of distance formula. Consequently, if $g_x(v,n_x) > 0$, it would be impossible for the center of gravity of $\partial_\Omega C_x^+$ to be a strictly positive multiple of $n_x$, obtaining a contradiction to Lemma \ref{lem:gravity}.

We have shown that $x \in \partial A \cap \Omega$, and it remains to show that $x \in \partial_r A \cap \Omega$. Employing the same notation as before, it still holds that $C_x^+ \subset \set{ y \in T_x M ; g_x(y,v) \leq 0 }$. Since in addition we know by Lemma \ref{lem:gravity} that the center of gravity of $\partial C_x$ must be at the origin, it necessarily follows that $\partial C_x = \set{ y \in T_x M ; g_x(y,v) = 0 }$. But by Remark \ref{rem:hyperplane}, this precisely characterizes regular boundary points, and the assertion now follows.
\end{proof}

\subsection{Main Tool}

Combining all the information contained in this section, we derive our main tool in this work:

\begin{thm} \label{thm:main-tool}
Let $(M^n,g,\mu)$ satisfy the $CDD(\rho,n+q,D)$ condition with $\rho \in \Real$, $q \in [0,\infty]$ and $D \in (0,+\infty]$.
Given $v \in (0,1)$, let $A$ denote an open isoperimetric minimizer with $\mu(A) = v$. Denote by $H_\mu(A)$ the constant generalized curvature of $\partial_r A \cap \Omega$. Then there exist $r_A,r_{A^c} > 0$ with $r_A + r_{A^c} \leq D$ so that:
\begin{eqnarray*}
1-v = \mu(\Omega \setminus A) & \leq & \mu^+(A) \int_0^{r_{A^c}} J_{H_\mu(A),\rho,n+q-1}(t) dt ~, \\
v = \mu(A) &\leq & \mu^+(A) \int_0^{r_A} J_{-H_\mu(A),\rho,n+q-1}(t) dt ~.
\end{eqnarray*}
\end{thm}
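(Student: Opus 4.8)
The plan is to apply the Generalized Heintze--Karcher theorem (Theorem \ref{thm:gen-HK}) to the regular part of the minimizer's boundary, using Proposition \ref{prop:closest-point} to ensure that the two resulting normal tubes cover $\Omega \setminus \overline{A}$ and $A$ respectively, and then to convert the Heintze--Karcher bounds into the asserted inequalities via Corollary \ref{cor:boundary-measure}.

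First I would set $S := \partial_r A \cap \Omega$; by Theorem \ref{thm:regularity} this is a $C^k$ embedded hypersurface carrying an outward unit normal $\nu$ and a \emph{constant} generalized mean curvature $H^\nu_{S,\mu} \equiv H_\mu(A)$, and by Corollary \ref{cor:boundary-measure} its weighted area satisfies $\int_S dvol_{S,\mu} = \int_{\partial_r A\cap\Omega}\Psi\, dvol_{\partial_r A} = P_\Psi(A,\Omega) = \mu^+(A)$. Now fix $p\in\Omega\setminus\overline{A}$ and let $x$ be a nearest point of $\partial_i A$ to $p$. By Proposition \ref{prop:closest-point}, $x\in S$; by geodesic convexity of $\overline{\Omega}$ together with the first-variation-of-distance formula (exactly as in the proof of Proposition \ref{prop:closest-point}), the minimizing geodesic from $x$ to $p$ stays in $\overline{\Omega}$ and leaves $x$ orthogonally to $S$ in the $+\nu$ direction, i.e. $p = \exp_x(t\,\nu(x))$ with $t=d(p,\partial_i A)$. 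Setting $r_{A^c}:=\sup\{d(p,\partial_i A):p\in\Omega\setminus\overline{A}\}$ (positive since $\Omega\setminus\overline A$ is a nonempty open set disjoint from $\partial_i A$), this shows $\Omega\setminus\overline{A}\subseteq S_{r_{A^c}}^+$ through ray-segments lying in $\overline{\Omega}$. The identical argument, now orienting $S$ by the inward normal $-\nu$ (whose generalized mean curvature is $H^{-\nu}_{S,\mu}=-H_\mu(A)$), gives $A\subseteq S_{r_A}^+$ with $r_A:=\sup\{d(p,\partial_i A):p\in A\}>0$.

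Feeding this into Theorem \ref{thm:gen-HK} — valid since $Ric_{g,\Psi,q}\ge\rho g$ on all of $\Omega$ and the relevant sub-tubes lie in $\overline{\Omega}$ — and using that $\mu(\partial_i A)=0$ (so $\mu(\Omega\setminus A)=\mu(\Omega\setminus\overline A)=1-v$) and that the generalized mean curvature is the single constant $H_\mu(A)$:
\begin{align*}
1-v \;=\; \mu(\Omega\setminus\overline A)\;\le\; \mu\big(S_{r_{A^c}}^+\big) &\;\le\; \int_S \int_0^{r_{A^c}} J_{H_\mu(A),\rho,n+q-1}(t)\,dt\; dvol_{S,\mu}(x) \\
&\;=\; \mu^+(A)\int_0^{r_{A^c}} J_{H_\mu(A),\rho,n+q-1}(t)\,dt ,
\end{align*}
and symmetrically $v=\mu(A)\le \mu^+(A)\int_0^{r_A} J_{-H_\mu(A),\rho,n+q-1}(t)\,dt$. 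It remains to check $r_A+r_{A^c}\le D$: given $p_1\in A$ and $p_2\in\Omega\setminus\overline A$, a minimizing geodesic joining them inside $\overline{\Omega}$ has length $\le\mathrm{diam}(\overline{\Omega})\le D$, and since it exits the open set $A$ it meets $\partial_i A$ at some point $y$; hence $D\ge d(p_1,p_2)=d(p_1,y)+d(y,p_2)\ge d(p_1,\partial_i A)+d(p_2,\partial_i A)$, and taking suprema over $p_1,p_2$ yields $r_A+r_{A^c}\le D$.

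The main obstacle is the bookkeeping near $\partial\Omega$. One must verify that the normal ray-segments furnished by Proposition \ref{prop:closest-point} genuinely remain in $\overline{\Omega}$, so that the curvature hypothesis of Theorem \ref{thm:gen-HK} applies to them (equivalently, one applies the pointwise Jacobian estimate underlying Heintze--Karcher only to the portion of the tube lying in $\overline\Omega$); this re-uses the geodesic-convexity and tangent-geodesic reasoning from the proof of Proposition \ref{prop:closest-point}. In the diameter step one must also ensure the crossing point $y$ can be taken in $\partial_i A=\overline{\partial A\cap\Omega}$ rather than merely in $\partial A\cap\partial\Omega$, which is handled by an arbitrarily small perturbation of $p_1,p_2$ into the interiors of $A$ and $\Omega\setminus\overline A$ and a limiting argument. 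Finally one should record the minor points that $r_A,r_{A^c}\in(0,D]$ are finite (bounded by $\mathrm{diam}(\overline\Omega)$) and that $\int_S dvol_{S,\mu}=\mu^+(A)<\infty$, so the Heintze--Karcher integral over $S$ converges.
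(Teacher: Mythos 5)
Your proof is correct and follows essentially the same route as the paper: you use Proposition \ref{prop:closest-point} to show the one-sided normal tubes from $\partial_r A \cap \Omega$ sweep out $A$ and $\Omega \setminus \overline{A}$, invoke the Bayle--Morgan Heintze--Karcher bound of Theorem \ref{thm:gen-HK} with the constant mean-curvature $H_\mu(A)$, and close the loop via Corollary \ref{cor:boundary-measure} and the triangle-type argument for $r_A + r_{A^c} \le D$. The one worry you flag at the end --- that the geodesic from $p_1$ to $p_2$ might cross $\partial A$ only at $\partial\Omega$ --- is moot: by the paper's definition of geodesic convexity (Section \ref{sec:pre}) the minimizing geodesic lies entirely in the \emph{open} set $\Omega$, so it exits $A$ at a point of $\partial A \cap \Omega$ and no perturbation is required.
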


\begin{proof}
Denote $A^c := \Omega \setminus \overline{A}$, and set:
\[
r_{A} := \sup \set{ d(x,\partial A \cap \Omega) ; x \in A} ~,~ r_{A^c} := \sup \set{ d(y,\partial A \cap \Omega) ; y \in A^c} ~.
\]
Since between any $x \in A$ and $y \in A^c$ there exists a distance minimizing geodesic contained in $\Omega$ (by convexity of $\Omega$), it must intersect $\partial A \cap \Omega$. Consequently, we obviously have $r_A + r_{A^c} \leq D$, where recall $D$ is an upper bound on the diameter of $\Omega$.

Applying Proposition \ref{prop:closest-point}, we are ensured that the outward pointing normal rays (with respect to $A$) emanating from $\partial_r A \cap \Omega$ and extending to a distance of $r_{A^c}$ will sweep out the entire $A^c$, and that similarly, the inward pointing normal rays extending to a distance of $r_A$ will sweep out the entire $A$. Applying the Generalized Heintze--Karcher Theorem \ref{thm:gen-HK} to the hypersurface $\partial_r A \cap \Omega$, and noting that the resulting Jacobian term is constant along it, the assertions immediately follows (taking into account Corollary \ref{cor:boundary-measure}).

Before concluding, we remark that as usual, $\mu(A^c) = \mu(\Omega \setminus A)$ thanks to the regularity of $\partial_i A$, and that the difference between using outward and inward normal rays amounts to changing the sign of $H_\mu(A)$ in the Jacobian term.
\end{proof}

\section{An Isoperimetric Inequality} \label{sec:proof}

In this section, we provide a proof of Theorem \ref{thm:main1}, first for $n \geq 2$, and subsequently for the elementary case $n=1$.

\begin{proof}[Proof of Theorem \ref{thm:main1}]
We may clearly assume that $D<\infty$ or that $\rho>0$, otherwise there is nothing to prove.

Let $v \in (0,1)$, and let $A \subset \Omega$ denote an open isoperimetric minimizer for $(M,g,\mu)$ with $\mu(A) = v$. Theorem \ref{thm:main-tool} states that there exist $r_A,r_{A^c} > 0$ with $r_A + r_{A^c} \leq D$ so that:
\[
\mu^+(A) \geq \max\brac{\frac{v}{\int_{0}^{r_A} J_{-H_\mu(A)}(t) dt}, \frac{1-v}{\int_0^{r_{A^c}} J_{H_\mu(A)}(t) dt}} ~,
\]
where we denote for brevity $J_H := J_{H,\rho,n+q-1}$. Noting that $J_{-H}(t) = J_{H}(-t)$, it follows in particular that:
\begin{equation} \label{eq:proof1}
\I(v) \geq \inf_{H \in \Real, a \in [D-D,D]} \max\brac{\frac{v}{\int_{-a}^{0} J_{H}(t) dt},\frac{1-v}{\int_0^{D-a} J_H(t) dt}} ~.
\end{equation}
Note that the first (second) term inside the maximum on the right-hand side above is continuously monotone non-increasing from $\infty$ (non-decreasing to $\infty$) in $a \in [0,D]$.
 Consequently, if $D<\infty$, then given any $H \in \Real$, the infimum over $a \in [0,D]$ of this maximum is attained at a point $a_{H,v}$ when both terms are equal, i.e. precisely when:
\begin{equation} \label{eq:proof2}
\frac{\int_{-a_{H,v}}^0 J_H(t) dt}{v} = \frac{\int_{0}^{D-a_{H,v}} J_H(t) dt}{1-v} = \int_{-a_{H,v}}^{D-a_{H,v}} J_H(t) dt ~.
\end{equation}
Denoting by $\mu_{H,v}$ the probability measure on $[-a_{H,v},D-a_{H,v}]$ having density proportional to $J_H(t)$, note that $\mu_{H,v}([-a_{H,v},0]) = v$ and that $\mu_{H,v}^+([-a_{H,v},0]) = 1 / \int_{-a_{H,v}}^{D-a_{H,v}} J_H(t) dt$ as $J_H(0) = 1$. Consequently, we deduce from (\ref{eq:proof1}) and (\ref{eq:proof2}) that:
\[
\I(v) \geq \inf_{H \in \Real} \mu_{H,v}^+([-a_{H,v},0]) \geq \inf_{H \in \Real} \J(J_H(t),[-a_{H,v},D-a_{H,v}])(v) ~,
\]
and in particular:
\[
\I(v) \geq \inf_{H \in \Real, a \in [0,D]} \J(J_H(t),[-a,D-a])(v) ~.
\]
When $D=\infty$ (so necessarily $\rho > 0$), one verifies that the second (first) term inside the maximum in (\ref{eq:proof1}) varies monotonically from $0$ to $\infty$ ($\infty$ to $0$) as $H$ varies from $-\infty$ to $\infty$, and so the infimum over $H \in \Real$ of this maximum is again attained at the unique point $H_v$ when both terms are equal:
\[
\frac{\int_{-\infty}^0 J_{H_v}(t) dt}{v} = \frac{\int_{0}^{\infty} J_{H_v}(t) dt}{1-v} = \int_{-\infty}^{\infty} J_{H_v}(t) dt
\]
(note that these expressions are finite since $\rho > 0$). The rest of the argument is identical to the one already described above, thereby concluding the proof.

Note that fixing $a$ (such that $a>0$ and $D-a > 0$) and varying $H$ also does the job when $D < \infty$, but we preferred to fix $H$ and vary $a$ in this case, as this may be more intuitive.
\end{proof}

\begin{rem}
It follows from the proof of Theorem \ref{thm:main1} that given $v \in (0,1)$, we need only to take an infimum over $H$ \emph{or} $a$ in (\ref{eq:main1}), and that the other parameter ($a$ or $H$) is actually determined by the former one together with $v$, so it seems as though we are being wasteful here. However, as we shall see in Corollary \ref{cor:inf2=inf1} below, we actually do not lose here at all (if we did, our bounds could not be sharp, as claimed in Theorem \ref{thm:main2}). The infimum over the second parameter serves both an aesthetic purpose, as well as enabling us to identify more easily the different model spaces in the proof of Corollary \ref{cor:main1}, where thanks to algebraic properties of the function $J_H(t)$, the infimum over both parameters simplifies to an infimum over a single equivalent one.
\end{rem}

\begin{cor} \label{cor:main1-1D}
Theorem \ref{thm:main1} also holds in the one-dimensional case, i.e. for any space $(\Real,|\cdot|,\mu)$ which satisfies the $CDD(\rho,1+q,D)$ condition.
\end{cor}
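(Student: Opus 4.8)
The plan is to reduce the one-dimensional statement to the same structural argument used in the proof of Theorem \ref{thm:main1}, bypassing the heavy Geometric Measure Theory machinery of Section \ref{sec:pre} (existence and regularity of isoperimetric minimizers, tangent cones, Proposition \ref{prop:closest-point}), which was needed in dimension $n \geq 2$ only to locate a regular closest point and apply the Generalized Heintze--Karcher Theorem. In dimension one everything is elementary: an isoperimetric ``region'' of measure $v$ minimizing Minkowski boundary measure on an interval $\overline{\Omega} = [c,c+\ell]$ with $\ell \leq D$ can be taken to be a subinterval, and we shall verify the one-dimensional analogue of Theorem \ref{thm:main-tool} directly, which is the only input into the proof of Theorem \ref{thm:main1} that actually uses $n \geq 2$.

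First I would record the elementary facts about the one-dimensional setting. Given $(\Real,\abs{\cdot},\mu)$ satisfying $CDD(\rho,1+q,D)$, write $\mu = \Psi(x)\,dx$ on $\overline{\Omega}$, $\Omega = (c,c+\ell)$ with $\ell \leq D$. As in Bobkov's observation and the classical one-dimensional isoperimetric analysis, for any $v \in (0,1)$ there is an isoperimetric minimizer of the form $A = (c,b)$ or $A = (b, c+\ell)$, so that $\partial A \cap \Omega$ consists of a single point $x_0$, $\mu(A) = v$, and $\mu^+(A) = \Psi(x_0)$. Set $H := \Psi'(x_0)/\Psi(x_0) = (\log\Psi)'(x_0)$, which plays the role of the generalized mean curvature $H_\mu(A)$; the curvature-dimension hypothesis $-(\log\Psi)'' - \frac{1}{q}((\log\Psi)')^2 \geq \rho$ in $\Omega$, together with Remark \ref{rem:J-char}, gives the differential inequality that forces, by a Riccati/Sturm comparison on each side of $x_0$,
\[
\Psi(x) \leq \Psi(x_0) \, J_{H,\rho,1+q-1}(x - x_0) \quad \text{for } x \geq x_0, \qquad \Psi(x) \leq \Psi(x_0)\, J_{-H,\rho,1+q-1}(x_0 - x) \quad \text{for } x \leq x_0
\]
(this is exactly the $n=1$ case of the Generalized Heintze--Karcher Theorem \ref{thm:gen-HK}, where the ``hypersurface'' is the point $x_0$ and the normal-exponential map is just translation; one may either invoke Theorem \ref{thm:gen-HK} with a one-point ``$S$'' or reprove the one-line comparison). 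Integrating, with $r_A := x_0 - c$ and $r_{A^c} := (c+\ell) - x_0$ (so $r_A + r_{A^c} = \ell \leq D$, and both are positive since $v \in (0,1)$),
\[
v = \mu(A) \leq \Psi(x_0)\int_0^{r_A} J_{-H}(t)\,dt = \mu^+(A)\int_0^{r_A} J_{-H}(t)\,dt, \qquad 1-v \leq \mu^+(A)\int_0^{r_{A^c}} J_{H}(t)\,dt,
\]
which is precisely the conclusion of Theorem \ref{thm:main-tool} in this setting (here $J_H := J_{H,\rho,1+q-1} = J_{H,\rho,n+q-1}$ with $n=1$).

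Once the one-dimensional analogue of Theorem \ref{thm:main-tool} is in hand, the remainder of the proof of Theorem \ref{thm:main1} applies verbatim: from the two inequalities above one obtains
\[
\I(v) \geq \inf_{H \in \Real,\ a \in [D-D,\,D]} \max\brac{\frac{v}{\int_{-a}^{0} J_H(t)\,dt},\ \frac{1-v}{\int_0^{D-a} J_H(t)\,dt}},
\]
and then the monotonicity argument (choosing $a = a_{H,v}$, or $H = H_v$ when $D = \infty$, so that the two terms in the maximum balance and the balanced value equals $\mu_{H,v}^+$ of a half-line under the model measure with density $\propto J_H$) identifies the right-hand side with $\inf_H \J(J_{H,\rho,n+q-1},[-a_{H,v},D-a_{H,v}])(v)$, hence with the bound \eqref{eq:main1} with $n=1$. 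The main thing to be careful about is that the reduction to a subinterval minimizer and the boundary-measure identity $\mu^+(A) = \Psi(x_0)$ do not require the positivity and $C^2$ assumptions on $\log\Psi$ on all of $\overline{\Omega}$ (cf. Remark \ref{rem:1D}): the comparison is only ever used on the open interval $\Omega$ where those assumptions are imposed, and the boundary behaviour of $\Psi$ affects only the harmless constants. I do not expect a genuine obstacle here — the content is simply checking that the one place in Section \ref{sec:pre} that invoked $n \geq 2$ (Proposition \ref{prop:closest-point} and the regularity theory feeding Theorem \ref{thm:main-tool}) collapses to a triviality when $\Omega$ is an interval.
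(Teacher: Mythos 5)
Your overall strategy is the same as the paper's: bypass the Geometric Measure Theory of Section~\ref{sec:pre}, verify the one-dimensional analogue of Theorem~\ref{thm:main-tool} directly via a Riccati/Sturm comparison, and then observe that the remainder of the proof of Theorem~\ref{thm:main1} goes through verbatim. However, there is a genuine gap at the very first step, which the paper is careful to avoid.

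You assume ``for any $v \in (0,1)$ there is an isoperimetric minimizer of the form $A = (c,b)$ or $A = (b,c+\ell)$, so that $\partial A \cap \Omega$ consists of a single point $x_0$,'' citing Bobkov's observation and ``classical one-dimensional isoperimetric analysis.'' But Bobkov's result \cite[Proposition 2.1]{BobkovExtremalHalfSpaces} states $\I = \J$ only for \emph{log-concave} densities, and under the $CDD(\rho,1+q,D)$ condition with $\rho < 0$ the density $\Psi$ need not be log-concave --- indeed, as the paper itself notes at the end of the proof of Corollary~\ref{cor:inf2=inf1}, the model densities $J_{H,\rho,m}$ themselves fail log-concavity when $\rho < 0$. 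For such $\Psi$ there is no a priori reason the minimizer is a half-line; it may be a finite union of intervals whose relative boundary $\partial A \cap \Omega$ has several points. If it is, your reduction computes $\mu^+$ of the wrong set and the lower bound does not follow from your comparison.

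The paper's proof handles the general minimizer: it takes an arbitrary open minimizer $A$ (with $\Omega \setminus A$ having no isolated points), so $\partial_r A = \partial A \cap \Omega$ is a finite set of points, observes $\mu^+(A) = \sum_{x \in \partial_r A} \Psi(x)$, and --- crucially --- invokes the first-variation/Lagrange-multiplier fact that $\nu(x)(\log\Psi)'(x)$ is \emph{constant} over $x \in \partial_r A$ (the $n=1$ version of the constant generalized mean curvature $H_\mu(A)$). With this constancy the sweeping argument summed over all boundary points yields the estimate \eqref{eq:1D-1}, from which the rest of the argument proceeds exactly as you describe. Your Sturm/Riccati comparison itself is fine, and it is the same computation the paper carries out via the maximum principle in \eqref{eq:1D-2}; the missing ingredient is the constancy of the one-dimensional ``mean curvature'' across a possibly multi-point boundary, without which you cannot avoid the log-concavity assumption you implicitly made.
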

\begin{proof}
Given $v \in (0,1)$, it still holds that there exists an open minimizer $A \subset \Real$ with $\mu(A) = v$ and $\mu^+(A) = \I(v) < \infty$. We may clearly assume without loss of generality that $\Omega \setminus A$ does not have isolated points, since those will not influence $\mu(A)$ nor $\mu^+(A)$. In that case, denoting $\partial_r A := \partial A \cap \Omega$, it follows easily that
$\mu^+(A) = \sum_{x \in \partial_r A} \Psi(x)$, and that for all $x \in \partial_r A$, $\nu(x) (\log \Psi)'(x) = H_\mu(A)$ is constant, where $\nu(x)$ is equal to $+1$ ($-1$) if $x$ is a right (left) boundary point. The point here is that Theorem \ref{thm:main-tool} remains valid in the following form:
\begin{equation} \label{eq:1D-1}
\mu(A_r) - \mu(A) \leq \mu^+(A) \int_0^r J_{H_\mu(A),\rho,q}(t) dt ~;
\end{equation}
in fact, the proof of Theorem \ref{thm:gen-HK} ultimately reduces to this one-dimensional case. The latter follows from a well-known elementary argument, which we reproduce here for completeness when $q \in (0,\infty)$; the case $q = \infty$ follows similarly and the case $q=0$ is obvious. Indeed, a simple application of the maximum principle ensures that since $\Psi^{1/q} \in C^2(\Omega)$ satisfies by assumption:
\[
 \frac{d^2}{dx^2}\Psi^{1/q} + \frac{\rho}{q} \Psi^{1/q} \leq 0 \text{ on } \Omega ~,
\]
then for any $x \in \Omega$ and $t \in \Real$ so that $[x,x+t] \subset \Omega$ we have (see e.g. \cite[Theorem 4.19]{GHLBookEdition3} or \cite{GromovGeneralizationOfLevy}):
\begin{equation} \label{eq:1D-2}
\frac{\Psi^{1/q}(x+t)}{\Psi^{1/q}(x)} \leq h_+(t) ~,
\end{equation}
where $h(t)$ denotes the solution to:
\[
\frac{d^2}{dt^2} h + \frac{\rho}{q} h = 0  ~,~ h(0) = 1 ~,~ h'(0) = \frac{(\Psi^{1/q})'(x)}{\Psi^{1/q}(x)} = \frac{1}{q} (\log \Psi)'(x) ~.
\]
Since:
\[
h(t) = c_\delta(t) + \frac{(\log \Psi)'(x)}{q} s_\delta(t) ~,~ \delta = \frac{\rho}{q} ~,
\]
we conclude together with (\ref{eq:1D-2}) that in the above range:
\[
\Psi(x+t) \leq \Psi(x) J_{(\log \Psi)'(x),\rho,q}(t) ~.
\]
Combining all of the above, (\ref{eq:1D-1}) immediately follows. The rest of the proof of Theorem \ref{thm:main1} remains unchanged.
\end{proof}

\begin{cor} \label{cor:inf2=inf1} \label{cor:replace-J-with-I}
For any $\rho \in \Real$, $m \in [0,\infty]$, $D \in (0,+\infty]$ and $v \in [0,1]$, we have:
\begin{eqnarray}
\label{eq:simplified1} &   & \inf_{H \in \Real, a \in [D-D,D]} \I(J_{H,\rho,m},[-a,D-a])(v) \\
\label{eq:simplified2} & = & \inf_{H \in \Real, a \in [D-D,D]} \J(J_{H,\rho,m},[-a,D-a])(v) \\
\label{eq:simplified3} & = & \;\;\;\;\;\;\;\; \inf_{H \in \Real} \J(J_{H,\rho,m},[-a_H,D-a_H])(v) ~.
\end{eqnarray}
Here $a_H \in [D-D,D]$ is chosen when $D < \infty$ so that $\mu_{H,v}((-\infty,0]) = v$, where $\mu_{H,v}$ denotes the probability measure supported in $[-a_H,D-a_H]$ with density proportional to $J_{H,\rho,m}$ there, i.e.:
\begin{equation} \label{eq:a_H}
\frac{v}{1-v} = \frac{\int_{-a_H}^0 J_{H,\rho,m}(t) dt}{\int_{0}^{D-a_H} J_{H,\rho,m}(t) dt} ~;
\end{equation}
when $D = \infty$, we set $a_H = \infty$.
\end{cor}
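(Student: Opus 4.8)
The plan is to establish the chain of equalities \eqref{eq:simplified1}--\eqref{eq:simplified3} by proving two inequalities each, exploiting the log-concavity structure of the functions $J_{H,\rho,m}$ and the optimization in $a$ that was already analyzed in the proof of Theorem~\ref{thm:main1}. The trivial direction \eqref{eq:simplified1}$\leq$\eqref{eq:simplified2} is immediate from $\J\geq\I$ pointwise, so for that pair the work is the reverse inequality \eqref{eq:simplified2}$\leq$\eqref{eq:simplified1}. The direction \eqref{eq:simplified2}$\leq$\eqref{eq:simplified3} is also trivial since \eqref{eq:simplified3} is an infimum over a distinguished subfamily (those $(H,a)$ pairs with $a=a_H$), so for that pair the work is \eqref{eq:simplified3}$\leq$\eqref{eq:simplified2}.

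First I would handle \eqref{eq:simplified3}$\leq$\eqref{eq:simplified2}. Here I want to show that for any fixed $H$, minimizing $\J(J_{H,\rho,m},[-a,D-a])(v)$ over $a\in[0,D]$ is achieved at $a=a_H$. This is exactly the computation already carried out in the proof of Theorem~\ref{thm:main1}: the boundary measure of the half-line $(-\infty,0]$ in the measure $\mu_{H,a,v}$ supported on $[-a,D-a]$ with density $\propto J_H$ equals $J_H(0)/\int_{-a}^{D-a}J_H = 1/\int_{-a}^{D-a}J_H$, but the half-line $(-\infty,0]$ does not have $\mu$-measure $v$ for general $a$; one must instead test the half-line $(-\infty,x_a]$ where $x_a$ is chosen so that its measure is $v$, and its boundary measure is $J_H(x_a)/\int_{-a}^{D-a}J_H$. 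A monotonicity argument in $a$ (the first/second terms of the max in \eqref{eq:proof1} being monotone, as noted there) shows the pointwise infimum over $a$ of these quantities is attained where the two half-line directions give equal boundary measure, forcing $x_a=0$, i.e. $a=a_H$, and giving the claimed reduction; when $D=\infty$ the same argument with $a_H=\infty$ applies, the integrals being finite since $\rho>0$.

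The main obstacle is the remaining inequality \eqref{eq:simplified2}$\leq$\eqref{eq:simplified1}, i.e. replacing $\J$ by $\I$ at no cost. The key fact is that each $J_{H,\rho,m}$ is \emph{log-concave} on its support: from Remark~\ref{rem:J-char}, $-(\log J)'' = \rho/m \cdot(\log J)'^2 + \text{(nonneg. in the relevant regime?)}$ — more robustly, one checks directly that $c_\delta(t)+\frac{H}{m}s_\delta(t)$ raised to a positive power $m$, restricted between consecutive roots, has concave logarithm, and similarly $\exp(Ht-\frac\rho2 t^2)$ is log-concave exactly when $\rho\geq 0$, and the degenerate $m=0$ cases are indicators of intervals, hence log-concave. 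By Bobkov's result \cite[Proposition 2.1]{BobkovExtremalHalfSpaces} quoted in the introduction, log-concavity of the density forces $\J(J_{H,\rho,m},L)=\I(J_{H,\rho,m},L)$, so the $\J$ and $\I$ infima agree \emph{term by term}, hence globally. The subtlety is the case $\rho<0$ with $m=\infty$, where $\exp(-\frac\rho2 t^2)$ is log-\emph{convex}; here one must instead argue, as I would, that for the pointwise infimum over $a\in\Real$ it suffices to note that the relevant competitor sets in the one-dimensional model spaces realizing the sharpness (Theorem~\ref{thm:main2}, Case~5) are themselves half-lines, so no gain is possible — alternatively, observe that this case is among the cases where the final bound is a genuine infimum and the proof of Corollary~\ref{cor:main1} Case~5 exhibits the extremal half-line configuration directly. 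Assembling the two inequalities for each pair gives the full chain, and the identification of $a_H$ via \eqref{eq:a_H} is precisely \eqref{eq:proof2} from the proof of Theorem~\ref{thm:main1}.
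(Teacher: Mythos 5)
Your decomposition into two trivial inequalities and two nontrivial ones is sound, but neither nontrivial step is correctly handled, and you have missed the short bootstrap argument the paper actually uses.

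\textbf{Gap in (\ref{eq:simplified3}) $\leq$ (\ref{eq:simplified2}).} You invoke the monotonicity analysis from the proof of Theorem~\ref{thm:main1}, but that analysis minimizes over $a$ the quantity
$\max\bigl(v/\int_{-a}^0 J_H,\,(1-v)/\int_0^{D-a} J_H\bigr)$,
whose minimum is $1/\int_{-a_H}^{D-a_H} J_H$, i.e.\ the boundary measure $\mu_{H,a_H}^+((-\infty,0])$. This is an \emph{upper} bound for $\J(J_H,[-a_H,D-a_H])(v)$, not an identity, and it is a completely different function of $a$ than $a\mapsto \J(J_H,[-a,D-a])(v)$, which is the minimum of the boundary measures of the two half-lines of measure $v$ (not $v$ and $1-v$) whose endpoints need not sit at the origin. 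Your claim that the minimizer of $\J$ in $a$ is where the two half-line boundary measures agree, ``forcing $x_a=0$,'' is unjustified and is not what the Theorem~\ref{thm:main1} monotonicity gives. The paper never proves this pointwise statement and does not need to.

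\textbf{Gap in (\ref{eq:simplified2}) $\leq$ (\ref{eq:simplified1}).} Bobkov's log-concavity criterion covers $\rho\geq 0$, but you have misidentified the scope of the failure for $\rho<0$. It is not only the case $m=\infty$ (where $\exp(Ht-\tfrac{\rho}{2}t^2)$ is log-convex): for finite $m$, $J_{H,\rho,m}=(\cosh(\sqrt{-\delta}t)+\beta\sinh(\sqrt{-\delta}t))_+^m$ satisfies $(\log J_{H,\rho,m})''=m\bigl(-\delta-(f'/f)^2\bigr)$ which is \emph{positive} near the endpoints of the support, so these densities also fail to be log-concave. The paper says this explicitly (``may fail to be true when $\rho<0$''). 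Your fallback — appealing to the sharpness construction of Theorem~\ref{thm:main2} and asserting the extremal competitors are half-lines — is not a proof and only gestures at the $m=\infty$ case anyway; the $\rho<0$, $m<\infty$ situation (Case~4 of Corollary~\ref{cor:main1}) is left untreated.

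\textbf{What the paper does instead.} The actual proof is a short self-application of the main theorem: each model measure $\mu_{H,a}$ with density proportional to $J_{H,\rho,m}$ on $[-a,D-a]$ is, by construction, a one-dimensional space satisfying $CDD(\rho,m+1,D)$, so Corollary~\ref{cor:main1-1D} gives $\I(J_{H,\rho,m},[-a,D-a])(v)\geq$ (\ref{eq:simplified3}) directly. Taking the infimum over $H,a$ yields (\ref{eq:simplified1}) $\geq$ (\ref{eq:simplified3}), which together with the two trivial inequalities closes the chain in one stroke, with no log-concavity hypothesis and no minimization over $a$ of the full profile $\J$. Bobkov's result appears in the paper only as an afterthought, precisely because it does not cover $\rho<0$.
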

\begin{proof}
Note that all of the above expressions are $0$ if $D=\infty$ and $\rho \leq 0$, and that obviously the assertion holds with the ``$=$"'s in (\ref{eq:simplified2}) and (\ref{eq:simplified3}) replaced by ``$\leq$"'s.
Observe that (\ref{eq:simplified3}) is precisely the lower bound ensured by the proof of Theorem \ref{thm:main1} when $m=n+q-1$; in particular, Corollary \ref{cor:main1-1D} implies that this is a lower bound on the boundary measure of sets having $v$ measure in any one-dimensional space satisfying the $CDD(\rho,m+1,D)$ condition. Applying this lower bound to all one-dimensional spaces $(\Real,\abs{\cdot},\mu_{H,a})$ for $H \in \Real, a \in [D-D,D]$, where $\mu_{H,a}$ is the probability measure having density proportional to $J_{H,\rho,m}$ on the interval $[-a,D-a]$ (note that if $D<\infty$ or $\rho > 0$ this is always possible), it follows that (\ref{eq:simplified1}) must also be bounded below by (\ref{eq:simplified3}), concluding the proof of the equivalence.

Note that when $J_{H,\rho,m}$ is log-concave, as mentioned in the Introduction, \cite[Proposition 2.1]{BobkovExtremalHalfSpaces} implies that $\I(J_{H,\rho,m},[-a,D-a]) = \J(J_{H,\rho,m},[-a,D-a])$. This is indeed the case for all $H \in \Real$ when $\rho \geq 0$, but may fail to be true when $\rho < 0$.
\end{proof}

\section{Families of Model Spaces} \label{sec:model}

In this section we provide a proof of Corollary \ref{cor:main1}, which identifies the various one parameter families of model spaces for the $CDD(\rho,n+q,D)$ condition, for different values of $\rho,q,D$. Note that by definition $CDD(\rho_1,n+q_1,D_1) \Rightarrow CDD(\rho_2,n+q_2,D_2)$ if $\rho_2 \leq \rho_1$, $q_2 \geq q_1$ and $D_2 \geq D_1$.

\medskip
We shall first require the following simple:
\begin{lem} \label{lem:profile-decreasing}
Let $f: \Real \rightarrow \Real_+$ denote a log-concave function (meaning that $- \log f: \Real \rightarrow \Real \cup \set{+\infty}$ is convex). Then given $v \in (0,1)$, the function $(a,b) \mapsto \J(f,[a,b])(v)$ is non-increasing in $b$ and non-decreasing in $a$ in the domain $\set{a<b} \subset \Real^2$.
\end{lem}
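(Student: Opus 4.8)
The plan is to reduce the statement to the one-dimensional half-line isoperimetric inequality for a log-concave density and to exploit monotonicity of the relevant ``profile'' under enlarging the interval. Recall that $\J(f,[a,b])(v)$ is, by definition, the minimum of $\mu_{f,[a,b]}^+(A)$ over half-lines $A = (-\infty,c) \cap [a,b]$ and $A = (c,\infty) \cap [a,b]$ with $\mu_{f,[a,b]}(A) = v$. For a half-line of the first type, if $\mu_{f,[a,b]} = f \cdot 1_{[a,b]} / \int_a^b f$, then the boundary measure of $(-\infty,c) \cap [a,b]$ is simply $f(c) / \int_a^b f(t)\,dt$, and the constraint $\mu_{f,[a,b]}((-\infty,c)) = v$ determines $c = c(a,b,v)$ implicitly via $\int_a^c f = v \int_a^b f$. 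Thus $\J(f,[a,b])(v) = \min\!\big( f(c_-)/Z,\ f(c_+)/Z\big)$ where $Z = \int_a^b f$, and $c_-,c_+$ are the two thresholds cutting off mass $v$ from the left and from the right, respectively. The task is to show this quantity is monotone non-decreasing in $a$ and non-increasing in $b$.

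First I would fix $a$ and increase $b$ to $b' > b$ (the case of decreasing $a$ is symmetric by reflecting $x \mapsto -x$, which preserves log-concavity and swaps the two roles). Enlarging the interval increases the normalizing constant $Z$ and moves the left threshold $c_-$ to the right (since more total mass means the left $v$-fraction extends further) and the right threshold $c_+$ also to the right. I would then argue separately for the two competitors. For the left half-line competitor, its boundary value is $f(c_-)/Z$; since $Z$ strictly increases, it suffices to control $f(c_-)$. Here log-concavity enters: one shows, using a standard ``mass-balance'' argument, that the density at the $v$-quantile point, normalized by the total mass, is non-increasing as the interval grows — this is precisely the monotonicity of the half-line isoperimetric profile of a log-concave measure under dilation of its support, which is classical (cf. the Prékopa--Leindler / Bobkov circle of ideas, and \cite[Proposition 2.1]{BobkovExtremalHalfSpaces}). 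For the right half-line competitor, the symmetric statement holds. Taking the minimum of two non-increasing quantities is again non-increasing, which gives the claim for the $b$-direction; reflecting gives the $a$-direction.

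The cleanest route, and the one I would actually write, avoids the quantile bookkeeping entirely: since $f$ is log-concave on $[a,b] \subset [a,b']$, the measure $\mu_{f,[a,b]}$ is just the normalized restriction of $\mu_{f,[a,b']}$ to the sub-interval, and by Bobkov's result $\I = \J$ for both. One then observes that restricting a log-concave measure to a sub-interval can only increase its isoperimetric profile pointwise: any test set $A$ for $\mu_{f,[a,b]}$ of measure $v$ extends (by appending the part of $[a,b'] \setminus [a,b]$ on the appropriate side, or via a comparison of the optimal half-lines) to a test set for $\mu_{f,[a,b']}$ whose relative boundary measure is no larger after renormalization — because the normalization constant grows while the density at the single relevant boundary point, for the extremal half-line, does not grow faster. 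The one genuinely delicate point — and the step I expect to be the main obstacle — is verifying this last comparison of boundary densities when the new threshold $c'$ differs from $c$: one must check that $f(c')/\int_a^{b'} f \le f(c)/\int_a^b f$, i.e. that the density does not spike enough to overcome the larger normalization, and this is exactly where log-concavity (ruling out such spikes, since $f$ has at most one ``peak'') is indispensable. I would handle it by writing the ratio of the two sides and differentiating in $b$, using $f'(c)/f(c) \ge f'(x)/f(x)$ for $x \ge c$ (monotonicity of $(\log f)'$) together with $\frac{d}{db} c > 0$; the sign works out in the desired direction.
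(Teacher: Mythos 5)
Your final strategy — differentiate the boundary value in $b$ and invoke monotonicity of $(\log f)'$ — is essentially the same argument the paper gives, so the proposal is on the right track. The paper packages it more cleanly by introducing $I := f \circ F^{-1}$ where $F(t) = \int_0^t f$: one then has $\J(f,[0,b])(v) = \min\bigl(I(vF(b)), I((1-v)F(b))\bigr)/F(b)$, and the whole differentiation collapses to the single statement $I'(x) \leq I(x)/x$, which follows from the observation $I\,I'' = (\log f)'' \circ F^{-1} \leq 0$ (so $I$ is concave) together with $I(0) \geq 0$. Your version unpacks the same computation directly on $f$ and $c$.

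Two small points to flag. First, the inequality you quote, $f'(c)/f(c) \geq f'(x)/f(x)$ for $x \geq c$, is not the comparison actually used: after computing $c' = v f(b)/f(c)$, one reduces to showing $(\log f)'(c)\int_a^c f \leq f(c)$, which (when $(\log f)'(c) > 0$) requires $(\log f)'(c) \leq (\log f)'(t)$ for $t \leq c$, i.e. a comparison on the \emph{left} of $c$ — still log-concavity, but in the other direction; the case $(\log f)'(c) \leq 0$ is trivial. Second, the earlier paragraphs of the proposal (the ``mass-balance'' / ``restriction increases the profile'' formulations) are circular: they restate the lemma and then cite it as classical, rather than proving it. Only the last sentence is an actual proof strategy, and it does work once the inequality direction is corrected and you note (as the paper does) that one may first assume $f$ smooth and pass to the general case by approximation.
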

\begin{proof}
Clearly, it is enough to prove the claim for $[a,b]$ in the interval supporting $f$. 
Note that since $t \mapsto f(-t)$ is also log-concave, it is enough to prove the claim just for the upper limit $b$. 
Translating, we may assume that $a=0$, and so $b > 0$. 
Set $F(t) = \int_0^t f(s) ds$, $F_\infty = \int_0^\infty f(s) ds$, and $I = f \circ F^{-1} : [0,F_\infty] \rightarrow \Real_+$. By definition:
\[
\J(f,[0,b])(v) = \min\brac{\frac{I( v \int_0^b f(s) ds)}{\int_0^b f(s) ds} ,\frac{I((1-v) \int_0^b f(s) ds)}{\int_0^b f(s) ds}} ~,
\]
so it is enough to prove the claim just for the first term inside the minimum above. Assuming that $f$ is smooth on its support (the general case follows by approximation), direct differentiation of this term in $b$ reveals that it is enough to show that:
\[
I'(x) \leq \frac{I(x)}{x} \;\;\; \forall x \in (0,F_\infty) ~.
\]
But note that $I(x) I''(x) = (\log f)''(F^{-1}(x)) \leq 0$, and since $I > 0$ on $(0,F_\infty)$, it follows that $I$ is concave on $[0,F_\infty]$. Concavity directly implies that:
\[
I'(x) \leq \frac{I(x) - I(0)}{x} \leq \frac{I(x)}{x} \;\;\; \forall x \in (0,F_\infty)~,
\]
as required, completing the proof.
\end{proof}

\begin{proof}[Proof of Corollary \ref{cor:main1}]

First, assume that $q<\infty$. We set $m := n+q-1$, recall that $\delta := \rho / m$, and if $\rho \neq 0$ denote:
\[
\beta := \frac{H}{m \sqrt{\abs{\delta}}} ~.
\]

\noindent
\textbf{Cases 1 and 2.} Assume in addition that $\rho > 0$, and observe that:
\[
J_{H,\rho,m}(t) = \brac{\cos(\sqrt{\delta}t) + \beta \sin(\sqrt{\delta}t)}_+^{m} = \brac{\frac{\sin(\alpha + \sqrt{\delta}t)}{\sin(\alpha)}}_+^{m} ~,
\]
where:
\[
\alpha := \cot^{-1}\brac{\beta} \in (0,\pi) ~.
\]
It follows immediately that:
\[
\inf_{H \in \Real , a \in [D-D,D]} \J(J_{H,\rho,n+q-1}(t),[-a,D-a]) = \inf_{\alpha \in (0,\pi) , a \in [D-D,D] } \J(\sin(\alpha + \sqrt{\delta} t)_+^{n+q-1},[-a,D-a]) ~.
\]
Performing the change of variables $\xi = \alpha / \sqrt{\delta} + t$, it follows that:
\[
\inf_{H \in \Real , a \in [D-D,D]} \J(J_{H,\rho,n+q-1}(t),[-a,D-a]) = \inf_{\xi \in [-D,D-D+\pi/\sqrt{\delta}]} \J(\sin(\sqrt{\delta} t)_+^{n+q-1},[\xi,\xi+D]) ~.
\]
Finally, observe that the function $t \mapsto \sin(\sqrt{\delta} t)_+^{n+q-1}$ is log-concave, and hence Lemma \ref{lem:profile-decreasing} implies that the worst case on the right-hand side above is when the model density has maximal support, so that:
\[
\inf_{H \in \Real , a \in [D-D,D]} \J(J_{H,\rho,n+q-1}(t),[-a,D-a]) = \inf_{\xi \in [0,\max(\pi/\sqrt{\delta}-D,0)]} \J(\sin(\sqrt{\delta} t)^{n+q-1},[\xi,\xi+D]) ~,
\]
as asserted in Cases 1 and 2.

\noindent
\textbf{Case 3.} Assume in addition that $\rho=0$ and $D<\infty$. The first assertion then follows by taking the limit as $\rho \rightarrow 0$ in Case 1, but this requires justification. We prefer to deduce the assertion directly. Indeed, note that:
\[
J_{H,0,m}(t) = \brac{1 + \frac{H}{m} t}_+^{m} ~.
\]
Observe that when $H=0$ we obtain the uniform density, and so $\J(J_{0,0,m}(t),[-a,D-a])(v) = \frac{1}{D} = \J(1,[0,D])(v)$ for all $v \in (0,1)$ and $a \in \Real$. When $H \neq 0$, we may translate by setting $s = t + \frac{m}{H}$, obtaining:
\begin{eqnarray*}
\inf_{H \in \Real \setminus \set{0}, a \in [0,D]} \J(J_{H,0,m}(t),[-a,D-a]) & = & \inf_{H \in \Real \setminus \set{0}, a \in [0,D]} \J(s^{m}_+ , \left [\frac{m}{H}-a , \frac{m}{H}+D-a \right]) \\
&=& \inf_{\xi \in \Real} \J(s^{m}_+,[\xi,\xi+D]) ~.
\end{eqnarray*}
Since $s \mapsto s^{m}_+$ is log-concave, Lemma \ref{lem:profile-decreasing} implies that it is enough to test $\xi \geq 0$ above, as asserted in Case 3. 
In fact, an elementary calculation reveals that pointwise:
\[
\lim_{\xi \rightarrow \infty} \J(s^{m},[\xi,\xi+D]) = \J(1,[0,D]) ~,
\]
and so we conclude that:
\begin{eqnarray*}
\inf_{H \in \Real, a \in [0,D]} \J(J_{H,0,n+q-1}(t),[-a,D-a])  &=& 
\min \left \{ \begin{array}{l}  \inf_{\xi \geq 0} \J( s^{n+q-1} , [\xi,\xi+D] ) ~,\\
\phantom{\inf_{\xi \in \Real}} \J(1,[0,D])
\end{array}
\right \}  \\
& = &  \inf_{\xi \geq 0} \J(s^{n+q-1},[\xi,\xi+D]) ~.
\end{eqnarray*}
The second assertion follows by direct calculation. It is clear that the uniform density in the formulation of the lower bound given in Case 3 was only added for completeness of the description of all model densities. 

\noindent
\textbf{Case 4.} Assume in addition that $\rho<0$ and $D<\infty$, and observe that:
\[
J_{H,\rho,m}(t) = \brac{\cosh(\sqrt{-\delta}t) + \beta \sinh(\sqrt{-\delta}t)}_+^{m} =
\begin{cases} \brac{\frac{\sinh(\alpha + \sqrt{-\delta}t)}{\sinh(\alpha)}}_+^{m} & \abs{\beta} > 1 \\
\brac{\frac{\cosh(\alpha + \sqrt{-\delta}t)}{\cosh(\alpha)}}^{m} & \abs{\beta} < 1 \\
\exp(\sqrt{-\delta} m t) & \beta = 1 \\
\exp(-\sqrt{-\delta} m t) & \beta = -1
\end{cases} ~,
\]
where:
\[
\alpha := \begin{cases} \coth^{-1}(\beta) \in \Real \setminus \set{0} & \abs{\beta} > 1 \\ \sinh^{-1}(\beta) \in \Real & \abs{\beta} < 1 \end{cases} ~.
\]
It easily follows that:
\[
\inf_{H \in \Real , a \in [0,D]} \J(J_{H,\rho,n+q-1}(t),[-a,D-a]) = \min \left \{ \begin{array}{l}
\inf_{\xi \in \Real} \J( \sinh(\sqrt{-\delta} t)_+^{n+q-1}, [\xi,\xi+D] ) ~ , \\
\inf_{\xi \in \Real} \J( \exp(\sqrt{-\delta} (n+q-1) t) , [\xi,\xi+D] ) ~, \\
\inf_{\xi \in \Real} \J(\cosh(\sqrt{-\delta} t)^{n+q-1},[\xi,\xi+D] )
\end{array}
\right \} ~.
\]
Observing that the function $t \mapsto \sinh(t)_+$ is log-concave and employing Lemma \ref{lem:profile-decreasing}, it follows that the first infimum in the right-hand-side above need only be taken over $\set{ \xi \geq 0}$. By scale invariance of the exponential function, the second infimum need only be tested at $\xi=0$. The assertion of Case 4 follows.

\medskip

Assume now that $q = \infty$, and recall that:
\[
J_{H,\rho,\infty}(t) = \exp(H t - \frac{\rho}{2} t^2) ~.
\]

\noindent \textbf{Cases 5 and 6.} Assume in addition that $\rho \neq 0$. Performing the change of variables $s = t - H / \rho$, it follows that:
\begin{eqnarray*}
& & \inf_{H \in \Real , a \in [D-D,D]} \J(J_{H,\rho,\infty}(t),[-a,D-a]) \\
&=& \inf_{H \in \Real , a \in [D-D,D]} \J(\exp(-\frac{\rho}{2} s^2),[-a-H/\rho,D-a-H/\rho]) \\
&=& \begin{cases} \inf_{\xi \in \Real} \J(\exp(-\frac{\rho}{2} s^2),[\xi,\xi+D]) & D < \infty \\ \phantom{\inf_{\xi \in \Real}} \J(\exp(-\frac{\rho}{2} s^2),\Real) & D = \infty \end{cases} ~,
\end{eqnarray*}
and the assertions of Cases 5 and 6 follow.

\noindent \textbf{Case 7.} Assume in addition that $\rho = 0$ and $D<\infty$. It follows immediately from the invariance of the exponential function under scaling and of $\J$ under reflection that:
\[
\inf_{H \in \Real , a \in [0,D]} \J(J_{H,0,\infty}(t),[-a,D-a]) = \inf_{H \in \Real} \J(\exp(H t),[0,D]) = \inf_{H \geq 0} \J(\exp(H t),[0,D]) ~,
\]
and the first assertion of Case 7 follows. Note that this also follows by taking the limit as $\rho \rightarrow 0$ in Case 5 (after scaling and translating the density to be 1 at the origin), but this is not so transparent and in any case requires justification.
The second assertion follows by direct calculation.
\end{proof}

\section{Sharpness} \label{sec:sharp}

In this section, we provide a proof of Theorem \ref{thm:main2}.

We would like to show that the bounds provided in Corollary \ref{cor:main1} are pointwise sharp. These bounds are all of the form:
\[
 \I(M,g,\mu)(v) \geq \inf_{\sigma \in \Sigma} \J(f_\sigma,L_\sigma)(v) \;\;\; \forall v \in [0,1] ~.
\]
Fixing $\sigma \in \Sigma$ and $v \in (0,1)$, we will construct a family indexed by $\eps > 0$ of $n$-dimensional manifolds-with-density $(M_\eps,g_\eps,\mu_\eps)$ satisfying the $CDD(\rho,n+q,D)$ condition, and find Borel test sets $A_{\eps} \subset M_\eps$ so that $\mu_\eps(A_{\eps}) = v$ and $\lim_{\eps \rightarrow 0} \mu_\eps^+(A_{\eps}) \leq \J(f_{\sigma},L_{\sigma})(v)$.

\medskip

First, observe that when $\rho \leq 0$ and $D = \infty$, the right-hand side of (\ref{eq:main1}) is 0 by the non-integrability of the stated density, and that this is indeed the best isoperimetric inequality one can hope for under the $CDD(\rho,n+q,D)$ condition. To see this, note that by scaling the metric by a factor of $\lambda^2$, if $(M,g,\mu)$ satisfies the $CDD(\rho,n+q,D)$ condition then $(M,\lambda^2 g,\mu)$ satisfies the $CDD(\rho/\lambda^2,n+q,\lambda D)$ condition. Consequently, when $\rho \leq 0$ and $D=\infty$, if $(M,g,\mu)$ satisfies the $CDD(\rho,n+q,\infty)$ condition then so does $(M,\lambda^2 g ,\mu)$ when $\lambda \geq 1$. However, it follows from the definition of boundary measure that $\I(M,\lambda^2 g,\mu) = \frac{1}{\lambda} \I(M,g,\mu)$, and so tending $\lambda \rightarrow \infty$, we see that $0$ is indeed the best possible lower bound on the isoperimetric profile in this case.

Next, we treat the easy case of $q=\infty$. The well known sharpness of Case 6 follows immediately by considering the space $(\Real^n,|\cdot|,\gamma^\rho_n)$ and taking the test set $A$ to be a half-plane (there is no need to use an approximating sequence here). Let us therefore concentrate on Case 5, as Case 7 follows similarly.

\begin{proof}[Proof of Sharpness of Case 5]
Let $\xi_0 \in \Real$ and $v \in (0,1)$.
We consider Euclidean space $(\Real^n,|\cdot|)$, and given $\eps > 0$, set $\Omega_\eps = [\xi_0,\xi_0 + \sqrt{D^2 - (n-1) \eps^2}] \times [0,\eps]^{n-1}$, having diameter $D$. Note that $\Omega_\eps$ does not have a smooth boundary, but this can be fixed by taking an additional approximation by convex smooth domains with the same bound on their diameter (see also Section \ref{sec:gen} for more on approximation). Define $\mu_\eps$ to be the probability measure on $\Omega_\eps$ having density proportional to $\exp(-\frac{\rho}{2}|x|^2)$, and note that $(\Real^n,|\cdot|,\mu_\eps)$ satisfies the $CDD(\rho,\infty,D)$ condition. Now let $A_{\eps}^{-} = \set{x_1 \leq a^{-}}$ and $A_{\eps}^+ = \set{x_1 \geq a^+}$ so that $\mu_\eps(A_{\eps}^{\pm}) = v$, and set $A_{\eps}$ to be $A_{\eps}^{-}$ or $A_{\eps}^{+}$ according to whichever half-plane has smaller $\mu_\eps$-boundary measure. Clearly the product structure ensures that:
\[
(\mu_\eps)^+(A_{\eps}) = \J(\exp(-\frac{\rho}{2} t^2),[\xi_0,\xi_0+\sqrt{D^2 - (n-1) \eps^2}])(v)
\]
(or only approximately when using the approximation by smooth domains). Taking the limit as $\eps$ goes to $0$, it follows that the lower bound of Case 5 cannot be pointwise improved.
\end{proof}

When $\rho = 0$, the case $q < \infty$ follows along the same lines:

\begin{proof}[Proof of Sharpness of Case 3]
Let $\xi_0 \geq 0$ and $v \in (0,1)$.
Consider again Euclidean space $(\Real^n,|\cdot|)$, and given $\eps>0$, set $\Omega_\eps$ to be the truncated cone:
\[
\set{(x_1,y) \in \Real \times \Real^{n-1} ; x_1 \in [\xi_0,\xi_0+D_\eps], |y| \leq \eps x_1} ~,
\]
where $D_\eps$ is chosen so that the $\Omega_\eps$ has diameter $D$ (obviously $\lim_{\eps \rightarrow 0} D_\eps = D$). As before, this truncated cone is not smooth at either of its sides, but may be approximated by convex smooth domains with the same bound on their diameter
(see also Section \ref{sec:gen} for more on approximation). Denote by $\mu_\eps$ the probability measure on $\Omega_\eps$ with density proportional to $(x_1)^q$, and note that $(\Real^n,|\cdot|,\mu_\eps)$ satisfies the $CDD(0,n+q,D)$ condition. Now let $A_{\eps}$ be a half-plane of the form $\set{x_1 \leq a}$ in case $v \leq 1/2$ and $\set{x_1 \geq a}$ otherwise, where $a$ is chosen so that $\mu(A_{\eps}) = v$. It follows that:
\[
 \mu^+(A_{\eps}) = \J(t^{n+q-1},[\xi_0,\xi_0+D_\eps])(v)
\]
(or only approximately when using the approximation by smooth domains). Taking the limit as $\eps$ goes to $0$, it follows that the lower bound of Case 3 cannot be pointwise improved, as asserted.
\end{proof}

The other cases when $q < \infty$ pose a bigger challenge. We will simultaneously handle Cases 1 and 4, Case 2 follows from Case 1 by approximation.

\begin{proof}[Proof of Sharpness when $q < \infty$]
Assume that $q > 0$, the case that $q=0$ follows either by approximation or by an argument which is actually simpler than the one described below (at least when $n \geq 3$). Let $\rho, H \in \Real$, $D \in (0,\infty]$, $a \in [D-D,D]$, and set $J = J_{H,\rho,n+q-1}$. Given $v \in (0,1)$, we would like to construct an oriented manifold $M$ endowed with smooth complete Riemannian metrics $\set{g_\eps}$ and probability measures $\set{\mu_\eps}$, so that each $(M,g_\eps,\mu_\eps)$ satisfies the $CDD(\rho,n+q,D)$ condition (whenever $\eps > 0$ is small enough) and so that $\lim_{\eps \rightarrow 0} \I(M,g_\eps,\mu_\eps)(v) \leq \J(J,[-a,b])(v)$ with $a+b = D$. Since $\J(J,[-a,b])(v)$ is continuous in $a,b \in \Real$ and does not change when $a$ and $b$ vary outside the support of $J$, we may assume that $J(-a),J(b) > 0$ and that $a+b < D$.

We construct the $n$-dimensional manifold $M := T_\infty \times S^{n-1}$ with $T_\infty := [e^1,e^2] \supset T := [-a,b]$, as described next. Given $\eps > 0$, we equip $M$ with the metric $g_\eps$ given by:
\[
g_\eps := dt^2 + \varrho_\eps(t)^2 g_{S^{n-1}} ~,~ \varrho_\eps(t) := \eps f_\eps(t) ~.
\]
Here $g_{S^{n-1}}$ denotes the standard metric on $S^{n-1}$, and $f_\eps: T_\infty \rightarrow \Real_+$ is a smooth (uniformly bounded in $\eps$) function to be determined later.
The probability measure $\mu_\eps$ will be supported on the set $T_\eps \times S^{n-1}$, where $T_\eps := [-a-\omega_1(\eps),b+\omega_2(\eps)]$, and $\omega_1(\eps),\omega_2(\eps) \geq 0$ are small constants tending to $0$ as $\eps \rightarrow 0$ to be determined later on.
Since $a + b < D$, when $\eps > 0$ is small enough, the latter's diameter will clearly be at most $D$.
We specify $\mu_\eps$ by setting:
\[
\mu_\eps := \Psi_\eps vol_{g_\eps}|_{T_\eps \times S^{n-1}} ~,~ \Psi_\eps^{1/q}(t,\theta) = c_\eps \g_\eps(t) ~,~ \text{for} \;\; (t,\theta) \in T_\eps \times S^{n-1} ~,
\]
with $c_\eps > 0$ a normalization constant, and $\g_\eps: T_\eps \rightarrow (0,\infty)$ a smooth function to be determined.

\medskip
\noindent
\textbf{Intuition.} Set:
\[
f_\eps(t) = c_J(\eps) J_0(t)  ~,~  \g_\eps(t) = J_0(t) ~,~  J_0(t) := J(t)^{\frac{1}{n+q-1}} ~,~ \text{for}~ t \in T ~,
\]
where $c_J(\eps)$ is some parameter we need for technical reasons, depending on $\eps$ and satisfying $0 < c_J^1 \leq c_J(\eps) \leq c_J^2 < \infty$.
The intuition behind this construction is that when $\eps > 0$ is small enough, the geometry of $(M,g_\eps)$ will contribute (at least) $(n-1)\delta$ to the generalized Ricci curvature and a factor of $J_0^{n-1}$ to the density $d\mu_{\eps} \brac{(-\infty,t] \times S^{n-1}}/dt$, whereas the measure $\mu_\eps$ will contribute $q \delta$ to the former and a factor of $\Psi = J_0^q$ to the latter, totalling $(n-1+q)\delta = \rho$ and $J_0^{n-1+q} = J$, respectively. We will indeed verify below that $Ric_{g_\eps,\Psi_\eps,q} \geq \rho g_\eps$ on the set $\Omega := T \times S^{n-1}$, if $n \geq 3$ and $\eps>0$ is small enough. What prevents us from setting $\omega_1(\eps) = \omega_2(\eps) = 0$ and concluding that the $CDD(\rho,n+q,D)$ condition is satisfied on $\Omega$, is that the latter will not be geodesically convex in general.

\medskip
\noindent
\textbf{Geodesic Convexity.}
Indeed, let us first check the second fundamental form of $\partial \Omega \subset (M,g_\eps)$ assuming $\omega_1(\eps) = \omega_2(\eps) = 0$.
Given $x = (t,\theta) \in T_\infty \times S^{n-1}$, let $\partial_t,\partial_{\theta_1},\ldots,\partial_{\theta_{n-1}}$ denote an orthonormal basis in $T_x M$. An elementary computation verifies that the second fundamental form of the submanifold $\set{t} \times S^{n-1}$ with respect to the normal $\partial_t$ (and our convention for specifying its sign from Section \ref{sec:pre}) is given by $(\log \varrho_\eps)'(t)$ times the identity on the submanifold's tangent space.
Consequently, if $J_0'(-a) \leq 0$ ($J_0'(b) \geq 0$), then the left (right) boundary of $T \times S^{n-1}$ has non-negative second fundamental form with respect to the outer normal, which is known \cite{BishopInBayleRosales} to be equivalent to local geodesic convexity near that boundary. Otherwise, if $J_0'(-a) > 0$ ($J_0'(b) < 0$), we will choose a metric $g_\eps$ which closes up our manifold near $-a$ ($b$) into a small smooth cap, so that $T_\eps \times S^{n-1}$ does not have a boundary there; we will refer to these terminal points as \emph{vertices.}

We therefore set:
\begin{equation} \label{eq:vertex}
e^1 := \begin{cases} -\infty & J'(-a) \leq 0 \\ -a-\omega_1(\eps) & J'(-a) > 0 \end{cases} ~ , ~
e^2 := \begin{cases} \infty & J'(b) \geq 0 \\ b+\omega_2(\eps) & J'(b) < 0 \end{cases} ~.
\end{equation}
In order to obtain a smooth manifold at the vertex $e^1$ ($e^2$) in case the bottom possibility above occurs, it is well known (e.g. \cite[p. 13]{PetersenBook2ndEd}) that we need to require that $\varrho_\eps^{(2k)}(e^i) = 0$, for all non-negative integers $k$, and that $\varrho'_\eps(e^1) = 1$ ($\varrho'_\eps(e^2) = -1$). Consequently, we will make sure that:
\begin{equation} \label{eq:cond1}
f_\eps^{(2k)}(e^i) = 0 ~,~ f'_\eps(e^1) = 1/\eps ~ ~ (f'_\eps(e^2) = -1/\eps) ~.
\end{equation}
Furthermore, to ensure that we obtain a smooth density at the vertex $e^i$, we will force $\g_\eps$ to be constant near the vertex.

When $\set{-a} \times S^{n-1}$ ($\set{b} \times S^{n-1}$) has non-negative second fundamental form (given by the top possibility in (\ref{eq:vertex})), we will need to extend the local geodesic convexity near this submanifold to a global one. To this end, we simply make sure to extend $f_\eps$ smoothly and \emph{monotonically} on $(-\infty,-a]$ ($[b,\infty)$). Indeed, any continuous path exiting $T \times S^{n-1}$ at e.g. $(b,\theta_1)$, will have to return to this set at $(b,\theta_2)$; however, since by construction $f_\eps(t) \geq f_\eps(b)$ if $t \geq b$, it follows by projecting onto $\set{b} \times S^{n-1}$ that the path cannot be shorter than the path $s \mapsto (b,\gamma(s))$, where $\gamma$ is a geodesic on $S^{n-1}$ connecting $\theta_1$ and $\theta_2$, and so geodesic convexity is established.

\medskip
\noindent
\textbf{Curvature Calculation.}
Clearly, our orthonormal basis $\partial_t,\partial_{\theta_1},\ldots,\partial_{\theta_{n-1}}$ diagonalizes both $Ric_{g_\eps}$ and $\nabla^2_{g_\eps} \Psi^{1/q}$. Moreover, since $\Psi_\eps^{1/q}(t,\theta) = c_\eps \g_\eps(t)$ depends on $t$ only, we easily verify that at $x = (t,\theta)$:
\[
\nabla^2_{g_\eps} \Psi_\eps^{1/q}(\partial_t,\partial_t) = c_\eps \g_\eps''(t) ~,~
\nabla^2_{g_\eps} \Psi_\eps^{1/q}(\partial_{\theta_i},\partial_{\theta_i}) = c_\eps \frac{\varrho'_\eps(t)}{\varrho_\eps(t)} \g_\eps'(t) ~.
\]
It is known (e.g. \cite[p. 68]{PetersenBook2ndEd}) that for rotationally invariant metrics such as $g_\eps$, the sectional curvature in $2$-planes containing $\partial_t$ is given by $-\varrho_\eps''(t)/\varrho_\eps(t)$, and in $2$-planes orthogonal to $\partial_t$ by $(1-\varrho_\eps'(t)^2) / \varrho_\eps(t)^2$. Recalling that:
\[
Ric_{g_\eps,\Psi_\eps,q} := Ric_{g_\eps} - q \frac{\nabla^2_{g_\eps} \Psi_\eps^{1/q}}{\Psi_\eps^{1/q}}
\]
and putting everything together, we obtain that:
\[
Ric_{g_\eps,\Psi_\eps,q}(\partial_t,\partial_t) =
-(n-1) \frac{f''_\eps(t)}{f_\eps(t)} - q \frac{\g_\eps''(t)}{\g_\eps(t)} ~;
\]
\begin{eqnarray*}
Ric_{g_\eps,\Psi_\eps,q}(\partial_{\theta_i},\partial_{\theta_i}) & = & - \frac{\varrho''_\eps(t)}{\varrho_\eps(t)} + (n-2) \frac{1 - \varrho_\eps'(t)^2}{\varrho_\eps(t)^2}  - q \frac{\varrho'_\eps(t)}{\varrho_\eps(t)} \frac{\g_\eps'(t)}{\g_\eps(t)} \\
&=&
- \frac{f''_\eps(t)}{f_\eps(t)} + (n-2) \frac{1 - \eps^2 f_\eps'(t)^2}{\eps^2 f_\eps(t)^2}  - q \frac{f'_\eps(t)}{f_\eps(t)} \frac{\g_\eps'(t)}{\g_\eps(t)} ~.
\end{eqnarray*}
Recall by Remark \ref{rem:J-char} that on $[-a,b]$, $J_0$ satisfies:
\[
J_0'' + \delta J_0 = 0 ~ , ~ \delta := \frac{\rho}{n+q-1} ~,
\]
and so on $T \times S^{n-1}$, it easily follows (see the subsequent calculation) that when $n \geq 3$ and for $\eps>0$ small enough, $Ric_{g_\eps,\Psi_\eps,q} \geq ((n-1) \delta + q \delta) g_\eps = \rho g_\eps$.

\medskip
\noindent
\textbf{Gluing Caps.}
It remains to properly handle the end points $-a$ and $b$. If $e^i$ is not a vertex point we simply set $\omega_i(\eps) = 0$. If on both sides we have no vertices then this concludes the construction (without taking any limit in $\eps$) - this may happen when $\rho \leq 0$ and for certain values of $H$ and $a$, as apparent in Case 3 and some of the subcases of Case 4.
However, in the presence of a vertex at $e^i$, setting $\omega_i(\eps) = 0$ is forbidden since this would be in violation of (\ref{eq:cond1}), rendering the manifold non-smooth at the vertex; and even if it were smooth, the density $\Psi_\eps$ would fail to be smooth there. To work around this problem, we ``glue" arbitrarily small smooth caps to $\Omega_\eps$ and endow them with an appropriate density, by appropriately defining $f_\eps$ and $\g_\eps$ on $T \setminus T_\eps$, in a manner ensuring that the curvature condition remains valid there, as described next.

Let us assume for simplicity that we only have one vertex at $e^1$ and describe the construction on $[-a-\omega_1(\eps),b]$; the required modifications on $[b,b+\omega_2(\eps)]$ in the case that $e^2$ is also a vertex are completely analogous. For some constants $0<\alpha(\eps)<\beta(\eps)<\omega(\eps) = \omega_1(\eps)$ to be determined and tending to $0$ as $\eps \rightarrow 0$, we set:
\[
f_\eps(t+a) := \begin{cases}
\sin(\frac{1}{\eps} (t + \omega(\eps))) & t \in [-\omega(\eps),-\beta(\eps)] \\
\sin(\frac{1}{\eps} (t + \omega(\eps)))  & t \in [-\beta(\eps),-\alpha(\eps)] \\
\Phi_\eps(t) & t \in [-\alpha(\eps) , 0] \\
c_J(\eps) J_0(t) & t \in [0,b+a]
\end{cases} ~,~
\g_\eps(t+a) := \begin{cases}
c_\eps & t \in [-\omega(\eps),-\beta(\eps)]  \\
\Gamma_\eps(t) & t \in [-\beta(\eps),-\alpha(\eps)]  \\
J_0(t) & t \in [-\alpha(\eps) , 0] \\
J_0(t) & t \in [0,b+a]
\end{cases} ~;
\]
the functions $\Phi_\eps$ and $\Gamma_\eps$ smoothly interpolate between the corresponding functions above in a manner described next.
Since we assume that $J_0'(-a), J_0(a) > 0$, we can make sure that $J_0'(-a) / 2 \leq J_0'(-a - \alpha(\eps)) \leq 2 J_0'(-a)$ and $J_0(-a) / 2 \leq J_0(-a - \alpha(\eps)) \leq 2 J_0(-a)$ if $\eps >0$ is small enough. For $\eps > 0$ small enough, set:
\[
c_\eps := J_0(-a-\alpha(\eps)) - \frac{1}{3} J_0'(-a-\alpha(\eps)) (\beta(\eps)-\alpha(\eps)) \geq J_0(-a) / 4 =: c_\Gamma^1 > 0 ~;
\]
it is easy to see that we may then choose $\Gamma_\eps$ to smoothly interpolate between $c_\eps$ and $J_0(t)$ so that it satisfies $0 < \Gamma'_\eps \leq 10 J_0'(-a)$ and $|\Gamma''_\eps| \leq 100 J_0'(-a) / (\beta(\eps) - \alpha(\eps))$.
Lemma \ref{lem:Phi} below ensures that setting $\alpha(\eps) = C \eps$, for some small enough $C>0$, $\Phi_\eps$ may be chosen to smoothly interpolate between $\sin(\frac{1}{\eps} (t + \omega(\eps)))$ and $c_J(\eps) J_0(t)$ for an appropriate $0 < c_J^1 \leq c_J(\eps) \leq c_J^2 < \infty$, so that $0 < \Phi'_\eps \leq 1/(2\eps)$ and $\Phi''_\eps / \Phi_\eps \leq -\delta$, and so that $(\pi/4) \eps \leq \omega(\eps)-\alpha(\eps) \leq (\pi/2) \eps$.
It follows that $c^1_\Phi := \sin(\pi/4) \leq \Phi_\eps \leq c^2_\Phi := c_J^2 J_0(-a)$. Setting $\beta(\eps) = (\omega(\eps) + \alpha(\eps))/2$, so that $\omega(\eps) - \beta(\eps) \geq (\pi/8) \eps$, our construction is complete.
Putting it all together, we obtain for $\eps > 0$ small enough:
\begin{equation} \label{eq:conc1}
Ric_{g_\eps,\Psi_\eps,q}(\partial_t,\partial_t) \geq \begin{cases} \frac{n-1}{\eps^2} - \frac{q 100 J_0'(-a)}{\eps \pi/8 } & t \in [-a-\omega(\eps), -a - \alpha(\eps)] \\
(n-1) \delta + q \delta = \rho & t \in [-a - \alpha(\eps) , b] \end{cases} ~;
\end{equation}
\begin{equation} \label{eq:conc2}
Ric_{g_\eps,\Psi_\eps,q}(\partial_{\theta_i},\partial_{\theta_i}) \geq \begin{cases}
\frac{1}{\eps^2} + \frac{n-2}{\eps^2} & t \in [-a - \omega(\eps), -a - \beta(\eps)] \\
\frac{1}{\eps^2} + \frac{n-2}{\eps^2} - q \frac{\cot(\pi/8)}{\eps} \frac{10 J'_0(-a)}{c_\Gamma^1}  & t \in [-a -\beta(\eps) , -a -\alpha(\eps)] \\
\delta + (n-2) \frac{1-1/4}{\eps^2 c_\Phi^2} - q \frac{1/(2\eps)}{c_\Phi^1} M_2 & t \in [-a - \alpha(\eps) , -a] \\
\delta + (n-2) \frac{1 - \eps^2 (c_J^2)^2 M_1^2 M_2^2}{\eps^2 (c_J^2)^2 M_1^2 } - q M_2^2 & t \in [-a,b]
\end{cases} ~,
\end{equation}
where:
\[
M_1 = \max_{t \in [-a,b]} J_0(t) ~,~ M_2 = \max_{t \in [-a,b]} | (\log J)'(t) | ~.
\]
Consequently, when $\eps$ tends to $0$, the quadratic terms in $1/\eps$ appearing in (\ref{eq:conc1}) and (\ref{eq:conc2}) dominate over the linear ones, and we readily verify that the $CDD(\rho,n+q,D)$ condition holds for small enough $\eps > 0$ on the smooth manifold-with-density $(M,g_\eps,\mu_\eps)$, when $n \geq 3$.

\medskip
\noindent
\textbf{Two-Dimensional Case.} When $n=2$, we slightly modify our construction as follows. Identifying $S^1$ with $[-\pi,\pi]$, we first restrict to the set $\Omega'_\eps := T_\eps \times [-\pi/2,\pi/2]$. By the symmetry, it is clear since $\Omega_\eps$ is geodesically convex that $\Omega'_\eps$ is too. We now modify the probability measure $\mu_\eps = \Psi_\eps vol_{g_\eps}|_{\Omega'_\eps}$, as follows:
\[
\Psi_\eps^{1/q}(t,\theta) = c_\eps \g_\eps(t) \nu_\eps(t,\theta)  ~,~ \nu_\eps(t,\theta) = \cos(h_\eps(t)\theta) ~,
\]
for $x = (t,\theta) \in T_\eps \times [-\pi/2,\pi/2]$. First, we set:
\[
h_\eps(t) = C_h  \text{ for } t \in [-a,b] ~,
\]
where $C_h>0$ is some small enough constant. To describe $h_\eps$ on $T_\eps \setminus T$, let us as before assume for simplicity that we only have a single vertex on the left, and set:
\[
h_\eps(t) := \begin{cases} 0 & t \in [-a -\omega(\eps),-a-\beta(\eps)] \\
\chi_\eps(t) & t \in [-a-\beta(\eps),-a-\alpha(\eps)] \\
C_h & t \in [-a - \alpha(\eps) , b]
\end{cases} ~,
\]
where $\chi_\eps$ increases smoothly from $0$ to $C_h$ in a manner so that $\chi'_\eps(t) \leq 10 C_h / (\beta(\eps) - \alpha(\eps))$ and $|\chi''(t)| \leq 100 C_h / (\beta(\eps) - \alpha(\eps))^2$. Note that the resulting $\Psi_\eps^{1/q}$ is smooth thanks to the restriction to $\Omega'_\eps$ and the fact that $h_\eps$ is $0$ in a neighborhood of the vertex.

Unfortunately, our orthonormal basis $\partial_t,\partial_{\theta_1}$ no longer diagonalizes $\nabla^2_{g_\eps} \Psi_\eps^{1/q}$, but it is still possible to verify that $Ric_{g_\eps,\Psi_\eps,q} \geq \rho g_\eps$ when $\eps>0$ is small enough, since this amounts to checking that a 2 by 2 matrix is positive-definite. We omit the extremely tedious computation, but only remark that the role of $\nu_\eps$ is to add ``more generalized curvature" in the $\partial_{\theta_1}$ direction, a point which we could avoid when $n \geq 3$. To summarize, the $CDD(\rho,n+q,D)$ condition holds for small enough $\eps > 0$ on $(M,g_\eps,\mu_\eps)$ as well.

\medskip
\noindent \textbf{Verifying Sharpness.} Now let $A_{\eps}^{-} = \set{(t,\theta) \in M ; t \leq t_0 }$ and $A_{\eps}^+ = \set{(t,\theta) \in M ; t \geq t_0 }$ so that $\mu_\eps(A_{\eps}^{\pm}) = v$, and set $A_{\eps}$ to be $A_{\eps}^{-}$ or $A_{\eps}^{+}$ according to whichever set has smaller $\mu_\eps$-boundary measure. When $n \geq 3$, our construction ensures that:
\[
(\mu_\eps)^+(A_{\eps}) = \J(f_\eps(t)^{n-1} \g_\eps(t)^{q},[-a-\omega_1(\eps),b+\omega_2(\eps)])(v) ~.
\]
Taking the limit as $\eps \rightarrow 0$, since $c_J(\eps) \geq c_J^1 > 0$, the right-hand side above tends to the desired $\J(J_0^{n+q-1},[-a,b])(v)$, and hence it follows that the lower bound given by Theorem \ref{thm:main1} when $q < \infty$ cannot be pointwise improved. When $n=2$, we obtain:
\[
(\mu_\eps)^+(A_{\eps}) = \J(f_\eps(t) \g_\eps(t)^{q} z_\eps(t) ,[-a-\omega_1(\eps),b+\omega_2(\eps)])(v) ~,
\]
with $z_\eps(t) := \int_{-\pi/2}^{\pi/2} \cos(h_\eps(t) \theta)^q d\theta$, and since the latter is a constant function on $[-a,b]$, the desired sharpness follows similarly by taking the limit as $\eps \rightarrow 0$.
\end{proof}

It remains to establish:

\begin{lem} \label{lem:Phi}
Let $\delta \in \Real$, and let $J_\delta$ denote a smooth function on $\Real$ satisfying $J_\delta''(t) + \delta J_\delta(t) = 0$ with $J_\delta(0),J_\delta'(0) > 0$.
Then for any $\eps > 0$ small enough, there exist $\omega(\eps) > \alpha(\eps) > 0$ tending to $0$ as $\eps \rightarrow 0$ and a smooth function $\Phi_\eps$ on $\Real$ so that:
\begin{itemize}
\item
On $(-\infty,-\alpha(\eps)]$, $\Phi_\eps(t) = \sin(\frac{1}{\eps}(t+\omega(\eps)))$.
\item
On $[-\alpha(\eps),0]$:
\begin{itemize}
\item $\Phi''_\eps(t) / \Phi_\eps(t) \leq -\delta$.
\item $0 < \Phi'_\eps(t) \leq \frac{1}{2\eps}$.
\end{itemize}
\item
On $[0,\infty)$, $\Phi_\eps(t) = c_J(\eps) J_\delta(t)$, with $\frac{1}{\sqrt{2}} \leq c_J(\eps) J_\delta(0) \leq \sqrt{2}$.
\item
$\alpha(\eps) = C \eps$ for some constant $C > 0$ and $(\pi/4) \eps \leq \omega(\eps) - \alpha(\eps) \leq (\pi/2) \eps$.
\end{itemize}
\end{lem}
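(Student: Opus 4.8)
The plan is to reduce the curvature inequality $\Phi_\eps''/\Phi_\eps\le-\delta$ (for $\Phi_\eps>0$) to a first-order differential inequality, via the substitution $w_\eps:=\Phi_\eps/\Phi_\eps'$ (the reciprocal of the logarithmic derivative). Since $\Phi_\eps''/\Phi_\eps=(\log\Phi_\eps)''+((\log\Phi_\eps)')^{2}=(1-w_\eps')/w_\eps^{2}$, the constraint $\Phi_\eps''/\Phi_\eps\le-\delta$ is equivalent to $w_\eps'\ge 1+\delta w_\eps^{2}$; the condition $0<\Phi_\eps'\le\frac{1}{2\eps}$ follows from $0<w_\eps$ together with $w_\eps\ge 4\eps$, once we know $\Phi_\eps\le 2$ on $[-\alpha(\eps),0]$; and the two pieces to be glued correspond, under this substitution, to $\sin(\frac{1}{\eps}(t+\omega))\longleftrightarrow w=\eps\tan(\frac{1}{\eps}(t+\omega))$, which solves $w'=1+\eps^{-2}w^{2}$, and to $c_{J}J_\delta\longleftrightarrow w=J_\delta/J_\delta'$, which solves $w'=1+\delta w^{2}$ identically (for every $c_{J}$). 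The first of these satisfies $w'\ge 1+\delta w^{2}$ with an enormous margin once $\eps^{-2}\ge\delta$, while the second satisfies it with equality.

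Fix once and for all $r:=\pi/2-\eta$ with $\eta>0$ a sufficiently small absolute constant (so that $\cos r<\frac12$, $\tan r\ge 4$, $\sin r\in[\frac{1}{\sqrt2},\sqrt2]$, and $r\in[\pi/4,\pi/2]$), and set $\alpha(\eps):=C\eps$ and $\omega(\eps):=\alpha(\eps)+r\eps$ with $C>0$ a small constant to be fixed; then $\omega(\eps)-\alpha(\eps)=r\eps\in[\frac{\pi}{4}\eps,\frac{\pi}{2}\eps]$, as demanded. On $(-\infty,-\alpha(\eps)]$ set $\Phi_\eps(t)=\sin(\frac{1}{\eps}(t+\omega(\eps)))$ and on $[0,\infty)$ set $\Phi_\eps(t)=c_{J}(\eps)J_\delta(t)$, the constant $c_{J}(\eps)$ to be determined by the gluing. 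The heart of the argument is the construction on $[-\alpha(\eps),0]$. There I would define $w_\eps$ as the solution of the Riccati-type equation $w_\eps'=1+\lambda_\eps(t)\,w_\eps^{2}$ with $w_\eps(-\alpha(\eps))=\eps\tan r$, for a smooth ``variable curvature'' function $\lambda_\eps\colon[-\alpha(\eps),\infty)\to[\delta,\infty)$ chosen so that: $\lambda_\eps\equiv\eps^{-2}$ on an initial stretch $[-\alpha(\eps),-\alpha(\eps)+\mu\eps]$ (legitimate since $\eps^{-2}\ge\delta$ for $\eps$ small); $\lambda_\eps\equiv\delta$ on a terminal stretch $[-\alpha(\eps)/3,\infty)$; $\lambda_\eps$ is flat to infinite order at these two seams; and $\lambda_\eps$ is otherwise arbitrary (in particular it may take large values in between). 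Because $\lambda_\eps\ge\delta$, the curvature constraint $w_\eps'=1+\lambda_\eps w_\eps^{2}\ge 1+\delta w_\eps^{2}$ holds automatically on all of $[-\alpha(\eps),0]$, and $w_\eps$ is $C^\infty$ as the solution of a smooth equation. On the initial stretch $w_\eps$ coincides with $\eps\tan(\frac{1}{\eps}(t+\omega))$ (same equation, same initial value), so $\Phi_\eps=\sin(\frac{1}{\eps}(t+\omega))$ there, which secures the prescribed form — and a smooth match — of $\Phi_\eps$ across $-\alpha(\eps)$. On the terminal stretch $w_\eps$ solves the autonomous equation $w'=1+\delta w^{2}$, the very equation solved by $w_{J}:=J_\delta/J_\delta'$; hence by uniqueness it suffices to arrange $w_\eps(0)=w_{J}(0)=J_\delta'(0)/J_\delta(0)$, whereupon $w_\eps\equiv w_{J}$ on $[-\alpha(\eps)/3,0]$, so that $\Phi_\eps=c_{J}(\eps)J_\delta$ on $[-\alpha(\eps)/3,0]$ with $c_{J}(\eps):=\Phi_\eps(-\alpha(\eps)/3)/J_\delta(-\alpha(\eps)/3)$, matching smoothly the prescribed formula on $[0,\infty)$.

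The equality $w_\eps(0)=w_{J}(0)$ is obtained by a shooting argument within the family of admissible profiles $\lambda_\eps$. For the ``trivial'' choice $\lambda_\eps\equiv\eps^{-2}$ on $[-\alpha(\eps),-\alpha(\eps)/3]$, a comparison gives $w_\eps\le\eps\tan(\frac{1}{\eps}(\cdot+\omega))=\Theta(\eps)<w_{J}(0)$ on $[-\alpha(\eps),0]$; making $\lambda_\eps$ carry enough extra mass on $[-\alpha(\eps)+\mu\eps,-\alpha(\eps)/3]$ drives $w_\eps(0)$ above $w_{J}(0)$; the value $w_\eps(0)$ depends continuously on the profile, so the intermediate value theorem supplies a profile with $w_\eps(0)=w_{J}(0)$ (restricting the parameter so that $w_\eps$ does not blow up before $t=0$, which is readily ensured since the target $w_{J}(0)$ is of order $1$ while blow-up would require $w_\eps$ of order $1/\eps$).

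Granting this construction, the remaining verifications are routine. A brief inspection of the Riccati equation gives $w_\eps\ge\eps\tan r\ge 4\eps$ on $[-\alpha(\eps),0]$ (on the part where $\lambda_\eps\ge 0$, $w_\eps$ is increasing from $\eps\tan r$, and on the terminal stretch $w_\eps=w_{J}$ is bounded below by $\frac12 J_\delta(0)/J_\delta'(0)$, which is $\gg\eps$); in particular $w_\eps>0$, so $\Phi_\eps$ is increasing on $[-\alpha(\eps),0]$ and $\Phi_\eps\le\Phi_\eps(0)$. Since $w_\eps\ge 4\eps$ on an interval of length $C\eps$, $\int_{-\alpha(\eps)}^{0}ds/w_\eps(s)\le C/4$, whence $\Phi_\eps(0)=\sin(r)\exp\bigl(\int_{-\alpha(\eps)}^{0}ds/w_\eps(s)\bigr)\in[\sin r,\,e^{C/4}\sin r]$, which lies in $[\frac{1}{\sqrt2},\sqrt2]$ for $C$ small; this both pins down $c_{J}(\eps)J_\delta(0)=\Phi_\eps(0)\in[\frac{1}{\sqrt2},\sqrt2]$ and gives $\Phi_\eps\le 2$ on $[-\alpha(\eps),0]$, so that $0<\Phi_\eps'=\Phi_\eps/w_\eps\le 2/(4\eps)=\frac{1}{2\eps}$ there. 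Together with $\alpha(\eps)=C\eps$ and $\omega(\eps)-\alpha(\eps)=r\eps\in[\frac{\pi}{4}\eps,\frac{\pi}{2}\eps]$, all assertions of the lemma follow. The main obstacle is precisely the middle stretch: $w_\eps$ must pass from scale $\eps$ near $-\alpha(\eps)$ to scale $1$ near $0$ in ``time'' of order $\eps$, which is exactly what forces $\lambda_\eps$ to take large values there and, correspondingly, the cap $\alpha(\eps)=C\eps$ to be taken arbitrarily thin; the inequality $w_\eps'\ge 1+\delta w_\eps^{2}$ is never the difficulty in that region, since its right-hand side stays bounded while $w_\eps'$ is of order $1/\eps$. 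The only genuinely delicate matching, at the $c_{J}J_\delta$-seam, is defused by the shooting-plus-uniqueness device, which converts an infinite-jet matching into the matching of a single value of a solution of an autonomous ODE.
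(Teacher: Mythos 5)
Your proof is correct in outline and the Riccati reduction is a clean and transparent way to see why the curvature inequality survives the gluing, but it takes a genuinely different route from the paper. (Minor slip: you write ``$w_{\eps}(0)=w_{J}(0)=J_\delta'(0)/J_\delta(0)$''; with your convention $w_{J}=J_\delta/J_\delta'$ the target is $J_\delta(0)/J_\delta'(0)$.)

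The paper \emph{also} constructs a Sturm--Liouville profile $\Phi_0''+\lambda_\eps\Phi_0=0$ with $\lambda_\eps$ interpolating from $\eps^{-2}$ to $\delta$ and $\alpha(\eps)=C\eps$, but it runs the ODE \emph{forward from $t=0$} with the initial data $\Phi_0(0)=J_\delta(0)$, $\Phi_0'(0)=J_\delta'(0)$, and crucially takes $\lambda_\eps$ \emph{monotone non-increasing}, so $\delta\le\lambda_\eps\le\eps^{-2}$. The matching at $t=0$ is then automatic; the work is to show that the phase $c_\eps^2$ and amplitude $c_\eps^1$ that emerge on $(-\infty,-\alpha(\eps)]$ satisfy the required bounds, and this is done by a Wronskian/maximum-principle comparison squeezing $(\log\Phi_0)'$ between $(\log J_{1/\eps^2})'$ and $(\log J_\delta)'$ — a two-sided comparison that is only available because $\lambda_\eps$ is pinched between $\delta$ and $\eps^{-2}$. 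You instead \emph{prescribe} the phase (you fix $\omega(\eps)-\alpha(\eps)=r\eps$ with $r$ an absolute constant) and normalize $\Phi_\eps(-\alpha(\eps))=\sin r$, so the matching at $t=0$ is no longer automatic and must be produced by a shooting argument over the profile $\lambda_\eps$. This forces $\lambda_\eps$ to be non-monotone and to \emph{exceed} $\eps^{-2}$ in the middle (as you correctly observe: with $\lambda_\eps\le\eps^{-2}$ one only gets $w_\eps(0)=O(\eps)$, far short of the $O(1)$ target $w_J(0)$), which kills the two-sided comparison the paper relies on — but you don't need it, since all your estimates follow from $w_\eps\ge 4\eps$ and the normalization. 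The price is that the shooting step (continuity of $w_\eps(-\alpha(\eps)/3)$ in the profile, and the fact that it ranges from below $w_J(-\alpha(\eps)/3)$ to at or above it before the solution blows up) is left at the level of a sketch; it is plausible and can be made rigorous with a one-parameter family of profiles plus the intermediate value theorem, but as written it is the only place where a reader would want more detail. What your route buys is conceptual clarity (the curvature constraint becomes the transparent first-order inequality $w'\ge 1+\delta w^2$, trivially respected by taking $\lambda_\eps\ge\delta$); what the paper's route buys is that all constants come out of the Sturm comparison with no shooting and no appeal to IVT.
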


\begin{proof}
Let $\Phi_0 = \Phi_{0,\eps}$ denote a smooth solution on $\Real$ to the following Sturm-Liouville equation:
\[
\Phi_0''(t) + \lambda_\eps(t) \Phi_0(t) = 0 ~,~ \Phi_0(0) = J_\delta(0) ~,~ \Phi_0'(0) = J_\delta'(0) ~,
\]
where $\lambda_\eps$ is a smooth non-increasing function interpolating between the values of $1/\eps^2$ on $(-\infty,-\alpha(\eps)]$ and $\delta$ on $[0,\infty)$ (assuming that $\eps>0$ is small enough), with $\alpha(\eps) = C \eps$ for some constant $C > 0$ to be determined later. This implies that $\Phi_0(t) = c^1_\eps \sin(\frac{1}{\eps} t + c^2_\eps)$ on $(-\infty,-\alpha(\eps)]$ for some constants $c^1_\eps \in \Real$ and $c^2_\eps \in [0,\pi]$.

Similarly, let $J_{1/\eps^2}$ denote a smooth solution on $\Real$ to:
\[
J_{1/\eps^2}''(t) + \frac{1}{\eps^2} J_{1/\eps^2}(t) = 0 ~,~ J_{1/\eps^2}(0) = J_\delta(0) ~,~ J_{1/\eps^2}'(0) = J_\delta'(0) ~.
\]
By the maximum principle, it follows that:
\begin{equation} \label{eq:PhiA}
(\log J_{1/\eps^2})'(t) \geq (\log \Phi_0)'(t) \geq (\log J_\delta)'(t) \;\;\; \forall t \in I_\eps := (-a_\eps,0] ~,
\end{equation}
where $a_\eps > 0$ is defined so that both $J_{1/\eps^2}$ and $J_\delta$ are positive on $I_\eps$; this may be easily verified by checking e.g. that $J_{1/\eps^2}' \Phi_0 - J_{1/\eps^2} \Phi_0'$ is non-increasing on $I_\eps$ and vanishes at the origin. In particular, since all three functions above coincide at the origin, it follows that:
\begin{equation} \label{eq:PhiB}
J_{1/\eps^2}(t) \leq \Phi_0(t) \leq J_\delta(t)  \;\;\; \forall t \in [-\alpha(\eps),0] \cap I_\eps ~.
\end{equation}

Now $J_{1/\eps^2} = d^1_\eps \sin(\frac{t}{\eps} + d^2_\eps)$, and since $d^1_\eps \sin(d^2_\eps) = J_\delta(0) > 0$ and $\frac{1}{\eps} \cot(d^2_\eps) = (\log J_\delta)'(0) > 0$, it immediately follows that for $\eps > 0$ small enough:
\[
d^1_\eps \geq J_\delta(0) > 0 ~,~ \pi/2 - C \leq d^2_\eps < \pi/2 ~.
\]
In particular, we verify that both $J_{1/\eps^2}$ and $J_\delta$ are positive on $[-\alpha(\eps) , 0]$ for $\eps > 0$ small enough if $C = \alpha(\eps)/\eps \leq d^2_\eps$, which is satisfied if we require that $C \leq \pi/4$.

Using (\ref{eq:PhiA}), we deduce if $\eps > 0$ is small enough that:
\begin{equation} \label{eq:PhiC}
0 < (\log J_\delta)'(0) / 2 \leq (\log \Phi_0)'(t) \leq \frac{1}{\eps} \cot(\frac{t}{\eps} + d_\eps^2) \;\;\; \forall t \in [-\alpha(\eps),0] ~.
\end{equation}
Evaluating this at $t = -\alpha(\eps)$, since $(\log \Phi_0)'(-\alpha(\eps)) = \frac{1}{\eps} \cot(c_\eps^2-C)$, we deduce that:
\begin{equation} \label{eq:PhiD}
\pi/2 > c_\eps^2 - C \geq d_\eps^2 - C \geq \pi/2 - 2C \geq \pi/4 ~,
\end{equation}
if we require that $C \leq \pi/8$. Plugging this back into (\ref{eq:PhiC}) implies that:
\[
0 < \max_{t \in [-\alpha(\eps),0]} (\log \Phi_0)'(t) \leq \frac{1}{\eps} \cot(d_\eps^2-C) \leq \frac{1}{\eps} \cot(\pi/2-2C) ~,
\]
and so choosing $C>0$ so that in addition $\cot(\pi/2 - 2C) \leq \frac{1}{2 \sqrt{2}}$, since $\Phi_0$ is increasing on $[-\alpha(\eps),0]$ by (\ref{eq:PhiC}), it follows that:
\[
\Phi_0'(t) \leq \frac{1}{2 \sqrt{2} \eps} \Phi_0(t) \leq \frac{1}{2 \sqrt{2} \eps} J_\delta(0) \;\;\; \forall t \in [-\alpha(\eps),0] ~.
\]

Finally, using (\ref{eq:PhiB}) at $t = -\alpha(\eps)$, we obtain for $\eps > 0$ small enough (since $J_\delta'(0) > 0$):
\[
d^1_\eps \sin(d^2_\eps - C) \leq c^1_\eps \sin(c^2_\eps - C) \leq J_0(0) ~.
\]
By (\ref{eq:PhiD}) and the fact that $d^1_\eps \geq J_0(0)$, the inequalities above easily imply:
\[
\frac{1}{\sqrt{2}} J_\delta(0) \leq c^1_\eps \leq \sqrt{2} J_\delta(0) ~.
\]

Now setting $\Phi_\eps = \Phi_{0,\eps} c_J(\eps)$ with $c_J(\eps) = 1 / c^1_\eps$ and $\omega(\eps) = c^2_\eps \eps$, one verifies that for $C > 0$ small enough, all of the required assertions are satisfied.

\end{proof}

\section{Generalizing the Curvature-Dimension-Diameter Condition} \label{sec:gen}

Before concluding, we slightly generalize the Curvature-Dimension-Diameter condition to dispose of some technical assumptions in the original definition.

Our main motivation for trying to extend the $CDD(\rho,n+q,D)$ condition is the technical requirement that $\Psi > 0$ on the entire $\overline{\Omega}$.
Allowing for $\Psi$ to vanish on $\partial \Omega$ is actually not unnatural, as witnessed by some of our one-dimensional model densities $J_{H,\rho,m}$, which may vanish outside some interval. As already observed in Remark \ref{rem:1D}, we did not require in the one-dimensional case that $\Psi > 0$ on $\partial \Omega$, since this was not needed for the proof of Corollary \ref{cor:main1-1D}. However, our proof for manifolds of arbitrary dimension crucially relied on the known regularity theory for isoperimetric minimizers in the interior of $\Omega$ as well as on its boundary, which as pointed out to us by Frank Morgan, may very well fail in the presence of a density vanishing even at a single point, so we cannot treat this case directly. Instead, we briefly describe an approximation procedure for handling this case, which is also useful for removing some of the other technical assumptions in our original definition of the $CDD$ condition. 
\begin{dfn*}[Generalized Curvature-Dimension-Diameter Condition]
Let $\rho \in \Real$, $q \in [0,\infty]$, $D \in (0,\infty]$.
Assume that $\mu$ may be approximated in total-variation by measures $\set{\mu_m}$ with densities uniformly bounded from above, so that $(M^n,g,\mu_m)$ satisfies the $CDD(\rho_m,n+q_m,D_m)$ condition. Assume that $\rho_m \rightarrow \rho$, $q_m \rightarrow q$ and $D_m \rightarrow D$, as $m \rightarrow \infty$.
We will then say that $(M^n,g,\mu)$ satisfies the generalized $CDD(\rho,n+q,D)$ condition.
\end{dfn*}

\noindent
Recall that $\set{\mu_m}$ is said to converge to $\mu$ in total-variation if:
\[
d_{TV}(\mu_m,\mu) := \sup_{A \subset \Omega} \abs{\mu_m(A) - \mu(A)} \rightarrow_{m \rightarrow \infty} 0 ~.
\]

\begin{prop}
Theorem \ref{thm:main1}, Corollary \ref{cor:main1} and Corollary \ref{cor:main1-1D} continue to hold for any $(M,g,\mu)$ satisfying the generalized $CDD(\rho,n+q,D)$ condition.
\end{prop}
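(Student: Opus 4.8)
The plan is to reduce everything to Theorem~\ref{thm:main1} and to establish the latter for the generalized condition by an approximation argument. The passage from Theorem~\ref{thm:main1} to Corollary~\ref{cor:main1} was a purely algebraic manipulation of the right-hand side of (\ref{eq:main1}) (Lemma~\ref{lem:profile-decreasing} together with elementary changes of variable), involving no geometry, so it carries over verbatim once Theorem~\ref{thm:main1} is known; likewise the one-dimensional Corollary~\ref{cor:main1-1D} follows by the same scheme. Hence it suffices to prove, for each fixed $v \in (0,1)$, that $\I(M,g,\mu)(v) \geq \mathcal{L}_{\rho,n+q,D}(v)$, where $\mathcal{L}_{\rho,n+q,D}(v)$ denotes the right-hand side of (\ref{eq:main1}). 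Write $\eta_m := d_{TV}(\mu_m,\mu) \to 0$; since total-variation convergence of absolutely continuous measures with uniformly bounded densities forces $\mu = \Psi \, vol_g$ to be absolutely continuous with $\Psi$ bounded and $\Psi_m \to \Psi$ in $L^1(vol_g)$, we may work with honest densities throughout.

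The crux is that total-variation closeness does \emph{not} control the Minkowski boundary measure: a negligible amount of $\mu_m$-mass concentrated near $\partial A$ can make $\mu_m^+(A)$ arbitrarily larger than $\mu^+(A)$, so one cannot simply test a fixed competitor $A$ against the inequality for $\mu_m$. The device circumventing this is to replace a Borel set $A$ with $\mu(A)=v$ and $\mu^+(A)<\infty$ by a slight geodesic enlargement $A_\delta$, with $\delta$ chosen well. For a.e.\ $\delta>0$ one has $\mu^+(A_\delta)=\int_{\{d(\cdot,A)=\delta\}}\Psi\,d\H^{n-1}$ and $\mu_m^+(A_\delta)=\int_{\{d(\cdot,A)=\delta\}}\Psi_m\,d\H^{n-1}$; these follow from the identity $(A_\delta)_\eps = A_{\delta+\eps}$, the a.e.\ differentiability of the monotone function $\delta\mapsto\mu(A_\delta)$, and the coarea formula applied to the $1$-Lipschitz function $d(\cdot,A)$. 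Moreover $\int_0^t\int_{\{d(\cdot,A)=s\}}|\Psi-\Psi_m|\,d\H^{n-1}\,ds\le\|\Psi-\Psi_m\|_{L^1}$, and for the monotone function $\delta\mapsto\mu(A_\delta)$ one has $\liminf_{\delta\to0}\mu^+(A_\delta)\le\mu^+(A)$ and $\mu(A_\delta)\to\mu(A)$. An elementary averaging argument (choosing a scale $t_m\to 0$ and a good $\delta_m\in(0,t_m)$) then produces $\delta_m\to0$ along which simultaneously $\limsup_m\mu^+(A_{\delta_m})\le\mu^+(A)$, $\;\mu_m^+(A_{\delta_m})-\mu^+(A_{\delta_m})\to0$, and $\mu_m(A_{\delta_m})=:w_m\to v$. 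Applying Theorem~\ref{thm:main1} to $(M,g,\mu_m)$, which satisfies $CDD(\rho_m,n+q_m,D_m)$, gives $\mu_m^+(A_{\delta_m})\ge\I(M,g,\mu_m)(w_m)\ge\mathcal{L}_{\rho_m,n+q_m,D_m}(w_m)$, whence
\[
\mu^+(A)\;\ge\;\limsup_{m\to\infty}\mu_m^+(A_{\delta_m})\;\ge\;\limsup_{m\to\infty}\mathcal{L}_{\rho_m,n+q_m,D_m}(w_m)~.
\]

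It remains to show $\limsup_m\mathcal{L}_{\rho_m,n+q_m,D_m}(w_m)\ge\mathcal{L}_{\rho,n+q,D}(v)$ whenever $w_m\to v$, $\rho_m\to\rho$, $q_m\to q$, $D_m\to D$. Here one exploits that $\mathcal{L}_{\rho,n+q,D}(v)$ is monotone in its parameters: since $CDD(\rho_1,n+q_1,D_1)\Rightarrow CDD(\rho_2,n+q_2,D_2)$ for $\rho_2\le\rho_1$, $q_2\ge q_1$, $D_2\ge D_1$, and since each one-dimensional density $J_{H,\rho,n+q-1}$ on $[-a,D-a]$ generates a space satisfying $CDD(\rho,n+q,D)$ (the remark following Theorem~\ref{thm:main1}), the family of model profiles defining $\mathcal{L}$ only enlarges as the parameters weaken; thus $\mathcal{L}_{\rho,n+q,D}(v)$ is non-decreasing in $\rho$ and non-increasing in $q$ and $D$. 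Consequently, for any $\rho'<\rho$, $q'>q$, $D'>D$ we have $\mathcal{L}_{\rho_m,n+q_m,D_m}\ge\mathcal{L}_{\rho',n+q',D'}$ for all large $m$, so that $\limsup_m\mathcal{L}_{\rho_m,n+q_m,D_m}(w_m)\ge\liminf_m\mathcal{L}_{\rho',n+q',D'}(w_m)\ge\mathcal{L}_{\rho',n+q',D'}(v)$, the last inequality by the lower semicontinuity in $v\in(0,1)$ of $\mathcal{L}_{\rho',n+q',D'}$, which follows from Remark~\ref{rem:J-char}, the attainment of the infimum in Corollary~\ref{cor:inf-attained}, and the continuity properties of $\J(f,L)$ collected in the Appendix. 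Letting $\rho'\uparrow\rho$, $q'\downarrow q$, $D'\downarrow D$ and invoking the continuity of $\mathcal{L}$ in its parameters along this monotone approach (using the explicit formulas of Corollary~\ref{cor:main1} to treat the degenerate values $\rho=0$, $q\in\{0,\infty\}$, $D=\infty$) completes the proof of Theorem~\ref{thm:main1} for the generalized condition; the same argument, with sets replaced by half-lines, yields Corollary~\ref{cor:main1-1D}, and Corollary~\ref{cor:main1} follows formally. The main obstacle is the first step — decoupling $\mu^+$ from $\mu_m^+$ under mere total-variation control — which the enlargement-plus-coarea trick resolves; the parameter limit is routine once the monotonicity of $\mathcal{L}$ is in hand.
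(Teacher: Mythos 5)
Your proposal takes a genuinely different route from the paper's. The paper's proof is brief: it cites two results from \cite{EMilman-RoleOfConvexity} — the one-sided convergence estimate $\liminf_{u\to v}\I(\mu)(u)\ge\lim_\eps\limsup_m\inf_{|u-v|<\eps}\I(\mu_m)(u)$ of Lemma~6.6 there, and the uniform local H\"older continuity of $\I(M,g,\nu)$ for densities bounded by a common $C$ from Proposition~6.8 there — combines them into $\I(\mu)(v)\ge\limsup_m\I(\mu_m)(v)$, and then invokes Lemma~\ref{lem:profile-continuity} to pass the limit through the infimum in parameters. You instead re-derive the total-variation step from scratch: working with a fixed competitor $A$, its geodesic enlargements $A_\delta$, the coarea formula for the $1$-Lipschitz function $d(\cdot,A)$ (using $|\nabla d|=1$ a.e.\ on $\{d>0\}$), and an averaging/Markov argument to pick a good sequence $\delta_m$. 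This sidesteps the uniform H\"older continuity of the profiles $\I(\mu_m)$ altogether — a real simplification — at the cost of reconstructing the content of the cited Lemma~6.6. Both your approach and the paper's ultimately invoke the continuity of the model lower bound $\mathcal{L}_{\rho,n+q,D}(v)$ in all four parameters, which is precisely Lemma~\ref{lem:profile-continuity}.

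Two places in your sketch deserve more care. First, the averaging step requires a diagonalization you do not spell out: to get simultaneously $\limsup_m\mu^+(A_{\delta_m})\le\mu^+(A)$ and $\mu_m^+(A_{\delta_m})-\mu^+(A_{\delta_m})\to 0$ you must choose a scale $t_m\to 0$ realizing the $\liminf$ defining $\mu^+(A)$ and then pass to a subsequence of $m$'s fast enough that $\snorm{\Psi-\Psi_m}_{L^1}/t_m\to 0$ before Markov's inequality on $(0,t_m)$ yields a common good $\delta_m$; as stated the two Markov bounds are applied independently. Second, the monotonicity claim that ``the family of model profiles defining $\mathcal{L}$ only enlarges as the parameters weaken'' is not literally correct — $\set{J_{H,\rho_1,m_1}}_H$ is not contained in $\set{J_{H,\rho_2,m_2}}_H$. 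The correct argument is that each $J_{H,\rho_1,m_1}$ on $[-a,D_1-a]$ is a one-dimensional space satisfying $CDD(\rho_1,1+m_1,D_1)$, hence also $CDD(\rho_2,1+m_2,D_2)$ for weaker parameters, so Corollary~\ref{cor:main1-1D} plus $\J\ge\I$ gives $\J(J_{H,\rho_1,m_1},\cdot)(v)\ge\mathcal{L}_{\rho_2,1+m_2,D_2}(v)$, and the infimum yields monotonicity. That said, this detour is unnecessary: Lemma~\ref{lem:profile-continuity} already gives full joint continuity of $\mathcal{L}$ in $(\rho,m,D,v)$, which you could cite directly in place of the monotonicity-plus-LSC argument.
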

\begin{proof}
According to the proof of \cite[Lemma 6.6]{EMilman-RoleOfConvexity}, for any sequence $\set{\mu_m}$ of Borel probability measures on a common separable metric space $(\Omega,d)$, which tends to $\mu$ in total-variation, we have:
\begin{equation} \label{eq:I-one-sided}
\liminf_{u \rightarrow v} \I_{(\Omega,d,\mu)}(u) \geq \lim_{\eps \rightarrow 0} \limsup_{m \rightarrow \infty} \inf_{|u-v| < \eps} \I_{(\Omega,d,\mu_m)}(u) \;\;\; \forall v \in (0,1) ~.
\end{equation}
Furthermore, it follows from the proof of \cite[Proposition 6.8]{EMilman-RoleOfConvexity}, that if $(M^n,g)$ is a complete smooth oriented connected Riemannian manifold, equipped with a Borel probability measure $\nu$ whose density (with respect to $vol_M$) is bounded above by $C$, then $\I(M,g,\nu): [0,1] \rightarrow \Real_+$ is locally H\"{o}lder continuous, with the modulus of continuity at $v \in (0,1)$ depending solely on $v$, $(M^n,g)$, $n$, $C$, $\delta \in (0,1)$ and an upper bound on the quantity:
\[
R_{v,\delta}(\nu) := \inf \set{R > 0; \nu(B(x_0, R)) \geq 1 - \delta \min(v,1-v)} ~,
\]
where $B(x_0,R)$ denotes the geodesic ball of radius $R$ about a fixed point $x_0 \in M$. Since $\set{\mu_m}$ converge to $\mu$ in total-variation, then $R_{v,1/2}(\mu_m) \leq R_{v,1/4}(\mu)$ for a given $v \in (0,1)$ and $m$ large enough. Together with the fact that the densities of $\set{\mu_m}$ are uniformly bounded above, it follows that the isoperimetric profiles $\I(M,g,\mu_m)$ are locally (in $(0,1)$) uniformly (in $m$) H\"{o}lder continuous. Consequently, (\ref{eq:I-one-sided}) translates in our case to:
\[
\I_{(\Omega,d,\mu)}(v) \geq \limsup_{m \rightarrow \infty} \I_{(\Omega,d,\mu_m)}(v) \;\;\; \forall v \in (0,1) ~.
\]
Applying the lower bound given by Theorem \ref{thm:main1}, it follows that:
\[
\I_{(\Omega,d,\mu)}(v) \geq \limsup_{m \rightarrow \infty} \inf_{H \in \Real, a \in [D_m-D_m,D_m]} \J\brac{ J_{H,\rho_m,n+q_m-1}, [-a,D_m-a] }(v) \;\;\; \forall v \in (0,1) ~.
\]
Note that if $D = \infty$ we may set $D_m = \infty$ (and that if $q = \infty$ we may also set $q_m = \infty$).
The fact that the limit and infimum on the right-hand side above may be interchanged follows directly from Lemma \ref{lem:profile-continuity} in the Appendix, thereby concluding the proof.
\end{proof}

\begin{rem}
Inspecting the proof of \cite[Proposition 6.8]{EMilman-RoleOfConvexity}, it is possible to extend the definition of the generalized CDD condition further, by allowing the densities of $\mu_m$ to only be locally uniformly bounded from above, but we do not insist on this here.
\end{rem}

\begin{rem}
It is not difficult to show that when $(M^n,g,\mu)$ satisfies a weaker form of the $CDD(\rho,n+q,D)$ condition, where we only require that $\Psi > 0$ in $\Omega$ (and not $\overline{\Omega}$), that $q (\Psi^{1/q} - 1) \in C^2(\overline{\Omega})$ (interpreted when $q=\infty$ as $\log \Psi \in C^2(\overline{\Omega})$), and that $\Omega$ satisfies either of the following conditions, then $(M^n,g,\mu)$ satisfies our generalized $CDD(\rho,n+q,D)$ condition (we omit the details):
\begin{itemize}
\item
$\Omega$ may be approximated from within by geodesically convex domains $\Omega_\eps$ with $\overline{\Omega_\eps} \subset \Omega$ and $C^2$ smooth boundaries.
\item
$\Omega$ is geodesically convex, has a $C^2$ smooth boundary, which is in addition assumed strongly convex (i.e. its second fundamental form is strictly positive definite at all points).
\end{itemize}
\end{rem}

\section{Concluding Remarks} \label{sec:remarks}

\subsection{Model Spaces for Related Problems} \label{subsec:BakryQian}

Naturally, various other geometric and analytic quantities admit one-dimensional model spaces as extremal cases under the $CDD(\rho,n+q,D)$ condition, but the collection of model spaces which are extremal for the isoperimetric problem treated here seems to be new. The two quantities most related to our work are Cheeger's constant, defined as $\inf_{v \in (0,1)} \I(v) / \min(v,1-v)$, and the spectral-gap of the Neumann Laplacian associated to the stationary measure $\mu$, namely $\Delta + g(\nabla \log \Psi, \nabla)$; these two quantities are known to be intimately connected by the works of Cheeger \cite{CheegerInq} (cf. \cite{MazyaCheegersInq1}), Buser \cite{BuserReverseCheeger} and Ledoux \cite{LedouxSpectralGapAndGeometry}.  The extremal model spaces for Cheeger's constant have been established for $\rho \leq 0$ or $D = \infty$ by Gallot \cite[Theorem 6.14]{GallotIsoperimetricInqs} (for $q=0$, extended to $q < \infty$ by Bayle \cite[Theorem E.3.3-4]{BayleThesis}), and the extremal model spaces for the spectral-gap have been established by Bakry and Qian \cite{BakryQianSharpSpectralGapEstimatesUnderCDD}, following the works of Lichnerowicz (e.g. \cite{GHLBookEdition3}), Li--Yau \cite{LiYauEigenvalues} and Zhong--Yang \cite{ZhongYangImprovingLiYau}.
However, neither of these collections of model spaces exhibits the full diversity given by Corollary \ref{cor:main1}: for those quantities, the choice of $\rho$, $n+q$ and $D$ uniquely determines a single model space (no need to go over a one-parameter family), whose corresponding one-dimensional density is in addition always symmetric about some point (w.l.o.g. the origin),  in contrast to most model spaces for the isoperimetric problem appearing in Corollary \ref{cor:main1}.

\subsection{Alternative Approaches}

In the \emph{Euclidean} setting, an alternative approach for deriving Theorem \ref{thm:main1}, which however does not extend to the full Riemannian setting (cf. \cite{EMilmanIsoperimetricBoundsOnManifolds}), is by reducing to the one-dimensional case treated in Corollary \ref{cor:main1-1D} using the localization technique of \cite{PayneWeinberger,Gromov-Milman,KLS} (e.g. as in \cite{BobkovLocalizedProofOfGaussianIso,BobkovKappaConcaveMeasures}). When $\rho \neq 0$ and $q < \infty$, this reduction seems new and of independent interest, but we leave this for a separate note. Furthermore, we may also argue that in the latter range of values for $\rho$ and $q$, the bound given in Theorem \ref{thm:main1} will \emph{not} be sharp in the Euclidean setting (as opposed to the cases $\rho = 0$ or $q = \infty$ where we could construct our model spaces in Euclidean space), so the Riemannian setting is really the right one for studying the CDD condition. 

One may also try to employ the semi-group approach pioneered by Bakry and \'Emery \cite{BakryEmery}, as in \cite{BakryLedoux,BakryQianSharpSpectralGapEstimatesUnderCDD}, but this approach crucially depends on the existence of good functional versions of the corresponding isoperimetric inequalities, and to the best of our knowledge, this is currently only known for the case $\rho > 0$ and  $q = D = \infty$ (see Bobkov \cite{BobkovGaussianIsopInqViaCube}). 

Consequently, the geometric approach we employ seems (at present) the only way of obtaining Theorem \ref{thm:main1}.

\subsection{Applications} \label{subsec:applications}

It is classical (cf. Federer--Fleming \cite{FedererFleming}, Maz'ya \cite{MazyaSobolevImbedding} and Cheeger \cite{CheegerInq}) that isoperimetric inequalities imply corresponding Sobolev inequalities, and it is also known (see Ledoux \cite{LedouxSpectralGapAndGeometry} and also \cite{EMilmanRoleOfConvexityInFunctionalInqs}) that Sobolev inequalities may be strengthened to isoperimetric inequalities under a curvature lower-bound. Consequently, as an application of the isoperimetric inequalities described in this work, we obtain in a subsequent work corresponding Sobolev inequalities on spaces satisfying the $CDD(\rho,n+q,D)$ condition. Up to numeric constants, our Sobolev inequalities are best possible, capturing the correct behaviour in $\rho$, $n+q$ and $D$, and in many cases improve the best known bounds. For instance, we are able to show that the log-Sobolev constant $\rho_{LS}$ (see \cite{Ledoux-Book} for definitions) of a space satisfying $CDD(\rho,\infty,D)$ is bounded below by:
\[
\sqrt{\rho_{LS}} \geq \frac{c}{\int_{0}^{C D} \exp\brac{-\frac{\rho}{2} t^2} dt} ~,
\]
where $c,C>0$ are numeric constants, improving in certain regimes the best known bounds by Wang \cite{WangIntegrabilityForLogSob} and Bakry--Ledoux--Qian \cite{BakryLedouxQianUnpublished}.

\section*{Appendix}
\renewcommand{\thesection}{A}
\setcounter{thm}{0}
\setcounter{equation}{0}
\setcounter{subsection}{0}

In the appendix, we prove
some useful properties of the lower bound given by Theorem \ref{thm:main1}, which are not central to the main results in this work.

\begin{lem} \label{lem:J-monotone}
For any $H \in \Real$, the function $J_{H,\rho,m}$ is pointwise monotone non-decreasing in $m \in [0,\infty]$ and monotone non-increasing in $\rho \in \Real$.
\end{lem}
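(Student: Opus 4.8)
The plan is to verify the two monotonicity statements for $J_{H,\rho,m}$ by checking them separately on each of the four regimes in the definition ($m=0$, $m\in(0,\infty)$, $m=\infty$) and then using the continuity statements from Remark \ref{rem:J-char} ($\lim_{m\to\infty}J_{H,\rho,m}=J_{H,\rho,\infty}$ and $\lim_{m\to 0+}J_{H,\rho,m}=J_{H,\rho,0}$) to glue the endpoints. The cleanest route is to reduce everything to the ODE characterization: for $m\in(0,\infty]$, $J=J_{H,\rho,m}$ solves $-m(J^{1/m})''/J^{1/m}=\rho$ with $J(0)=1$, $J'(0)=H$, on the maximal interval around the origin where a positive solution exists (and $J$ is set to $0$ outside, via the $(\cdot)_+$ truncation). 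So I would first record that on this interval $u:=J^{1/m}$ solves $u''+(\rho/m)u=0$, $u(0)=1$, $u'(0)=H/m$, hence $u(t)=c_{\delta}(t)+\frac{H}{m}s_{\delta}(t)$ with $\delta=\rho/m$, which is exactly the explicit formula; and for $m=\infty$, $\log J$ solves $(\log J)''=-\rho$, giving $\log J(t)=Ht-\frac{\rho}{2}t^2$.

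For monotonicity in $\rho$ at fixed $H$, $m$: fix $t$ and differentiate. On the regime $m=\infty$ this is immediate since $\partial_\rho\log J_{H,\rho,\infty}(t)=-t^2/2\le 0$. On $m\in(0,\infty)$ I would argue via a Sturm-type comparison: if $\rho_1<\rho_2$ then $u_i:=J_{H,\rho_i,m}^{1/m}$ solve $u_i''+(\rho_i/m)u_i=0$ with the same Cauchy data at $0$, and a standard computation shows $W:=u_1'u_2-u_1u_2'$ satisfies $W'=\frac{\rho_2-\rho_1}{m}u_1u_2$, $W(0)=0$; as long as both $u_i>0$ this forces $u_1\ge u_2$ on $t\ge 0$ and $u_1\ge u_2$ on $t\le 0$ as well (checking the sign bookkeeping on each side of the origin), and in particular the first positive/negative zero of $u_2$ occurs no later than that of $u_1$, so the truncation only helps the inequality $J_{H,\rho_1,m}\ge J_{H,\rho_2,m}$; raising to the power $m>0$ preserves it. For $m=0$ one reads off the claim directly from the two indicator cases, and the endpoints $m=0,\infty$ follow by the limits in Remark \ref{rem:J-char}.

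For monotonicity in $m$ at fixed $H$, $\rho$: I would again reduce to $t>0$ (the case $t<0$ being symmetric after replacing $H$ by $-H$ and $t$ by $-t$, and $t=0$ trivial). The key observation is that $\log J_{H,\rho,m}(t)$ is, for each fixed $t$, non-decreasing in $m$; equivalently, writing $J_{H,\rho,m}=\exp(m\log u_m)$ with $u_m(t)=c_{\rho/m}(t)+\frac{H}{m}s_{\rho/m}(t)$, one wants $m\mapsto m\log u_m(t)$ non-decreasing, with the limiting value $Ht-\frac{\rho}{2}t^2$ as $m\to\infty$. I expect the most efficient argument is a differential inequality in $m$: set $\phi(m):=\log J_{H,\rho,m}(t)$ on the region where $J_{H,\rho,m}(t)>0$, compute $\phi'(m)$ using the ODE, and show $\phi'(m)\ge 0$; alternatively one can compare the solutions $v_m:=\log J_{H,\rho,m}$ of $-v''-\frac1m(v')^2=\rho$, $v(0)=0$, $v'(0)=H$, and note that increasing $m$ weakens the $-\frac1m(v')^2$ term, so by a comparison argument for this Riccati-type equation $v_m$ increases — one must only be careful that on the truncated region $J=0$ the inequality $J_{H,\rho,m_1}\le J_{H,\rho,m_2}$ for $m_1<m_2$ is automatic because the support interval $[\xi_-(m),\xi_+(m)]$ is itself monotone in $m$ (smaller $m$ means earlier zeros of $u_m$, hence smaller support).

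The main obstacle I anticipate is the bookkeeping around the truncation $(\cdot)_+$ and the support interval: the explicit formulas and ODE comparisons are clean only where the solution is positive, so the real content is showing that the first positive root $\xi_+(m)$ and first non-positive root $\xi_-(m)$ move in the favorable direction as $m$ increases (respectively as $\rho$ decreases), so that the truncation never reverses the pointwise inequality. Once the support monotonicity is established, each regime is a short Sturm/Riccati comparison, and the endpoint regimes $m=0,\infty$ follow from the continuity already recorded in Remark \ref{rem:J-char}.
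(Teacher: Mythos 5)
Your Wronskian/Sturm argument for the $\rho$-monotonicity at fixed $m$ is correct and essentially equivalent to what the paper does. The issue is the $m$-monotonicity: you sketch two possible approaches (differentiating $\phi(m)=\log J_{H,\rho,m}(t)$ in $m$, or a Riccati comparison using that the term $-\frac1m(v')^2$ weakens as $m$ grows) but carry neither one out, and you explicitly leave the truncation/support bookkeeping as an unresolved obstacle. As written, this is a genuine gap. The Riccati route can in principle be pushed through, but it requires checking the sign of $w_{m_2}-w_{m_1}$ (with $w_m := (\log J_{H,\rho,m})'$) separately on $t>0$ and $t<0$, and then integrating, which is precisely the kind of bookkeeping you flag but don't do.

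The paper sidesteps all of this with one observation that handles both monotonicities and the support containment simultaneously. By the identity in Remark \ref{rem:J-char},
\[
-m\,\frac{(J^{1/m})''}{J^{1/m}} \;=\; -(\log J)'' - \frac{1}{m}\bigl((\log J)'\bigr)^2 \;=\;\rho ,
\]
so if $0<m_1\le m_2\le\infty$, $\rho_1\ge\rho_2$ and $J := J_{H,\rho_1,m_1}$, then raising $J$ to the power $1/m_2$ (the \emph{larger} index's reciprocal) yields
\[
-m_2\,\frac{(J^{1/m_2})''}{J^{1/m_2}} \;=\; -(\log J)'' - \frac{1}{m_2}\bigl((\log J)'\bigr)^2 \;\ge\; -(\log J)'' - \frac{1}{m_1}\bigl((\log J)'\bigr)^2 \;=\;\rho_1\;\ge\;\rho_2 ,
\]
i.e.\ $u := J_{H,\rho_1,m_1}^{1/m_2}$ satisfies the linear differential inequality $u''+(\rho_2/m_2)\,u\le 0$ with the same Cauchy data $u(0)=1$, $u'(0)=H/m_2$ as $J_{H,\rho_2,m_2}^{1/m_2}$. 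A single maximum-principle comparison (exactly as used in Lemma \ref{lem:Phi}) then gives $J_{H,\rho_1,m_1}^{1/m_2}\le J_{H,\rho_2,m_2}^{1/m_2}$ on the interval where the former is positive, and in particular the support containment $(a_{H,\rho_1,m_1},b_{H,\rho_1,m_1})\subset(a_{H,\rho_2,m_2},b_{H,\rho_2,m_2})$ comes for free, so the truncation by $(\cdot)_+$ never reverses the inequality. If you want to salvage your two-step plan, the fix is precisely to normalize both functions by the common exponent $1/m_2$ before comparing; as it stands, the proposal states the right target but does not reach it.
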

\begin{proof}
The claim for $m=0$ follows by direct inspection. When $m> 0$, recall that by Remark \ref{rem:J-char}, $J_{H,\rho,m}$ coincides with the solution $J$ to:
\[
- m \frac{(J^{1/m})''}{J^{1/m}} = -(\log J)'' - \frac{1}{m} ((\log J)')^2 = \rho ~,~ J(0) = 1 ~,~ J'(0) = H ~,
\]
on the maximal interval $(a_{H,\rho,m},b_{H,\rho,m})$ containing the origin where this solution exists.
It follows that if $0 < m_1 \leq m_2 \leq \infty$ and $\rho_1 \geq \rho_2$, then on $(a_{H,\rho_1,m_1},b_{H,\rho_1,m_1})$:
\[
- m_2 \frac{(J_{H,\rho_1,m_1}^{1/m_2})''}{J_{H,\rho_1,m_1}^{1/m_2}} \geq \rho_2 = - m_2 \frac{(J_{H,\rho_2,m_2}^{1/m_2})''}{J_{H,\rho_2,m_2}^{1/m_2}} ~,~ J_{H,\rho_i,m_i}^{1/m_2}(0) = 1 ~,~ (J_{H,\rho_i,m_i}^{1/m_2})'(0) = \frac{H}{m_2} ~,~ i=1,2 ~.
\]
Consequently, an application of the maximum principle as in the proof of Lemma \ref{lem:Phi} implies that $J_{H,\rho_1,m_1}^{1/m_2} \leq J_{H,\rho_2,m_2}^{1/m_2}$ on $(a_{H,\rho_1,m_1},b_{H,\rho_1,m_1})$ and in particular that $(a_{H,\rho_1,m_1},b_{H,\rho_1,m_1}) \subset (a_{H,\rho_2,m_2},b_{H,\rho_2,m_2})$, and so the assertion follows.
\end{proof}

\begin{lem} \label{lem:profile-continuity}
The lower bound given by Theorem \ref{thm:main1}, namely the function:
\begin{equation} \label{eq:profile-continuity}
\Real \times (0,\infty] \times (0,\infty) \times (0,1) \ni (\rho,m,D,v) \mapsto \inf_{H \in \Real, a \in [D-D,D]} \J\brac{ J_{H,\rho,m}, [-a,D-a] }(v) \in (0,\infty) ~,
\end{equation}
is continuous. A similar statement holds if we fix $D = \infty$ on the domain $(0,\infty) \times (0,\infty] \times \set{\infty} \times (0,1)$.
\end{lem}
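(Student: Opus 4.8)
The plan is to prove upper and lower semi-continuity of
$V(\rho,m,D,v):=\inf_{H\in\Real,\,a\in[D-D,D]}\J(J_{H,\rho,m},[-a,D-a])(v)$
separately; on the domain in question $V$ is finite and positive by Corollary~\ref{cor:main1}, and upper semi-continuity will be immediate once one has the pointwise continuity of the building blocks, so all the content lies in lower semi-continuity. The pointwise input I would record first is that for a continuous nonnegative density $f$ on $[c,d]$ with $0<\int_c^d f<\infty$ one has $\J(f,[c,d])(v)=\tfrac1Z\min\!\big(f(F^{-1}(vZ)),f(F^{-1}((1-v)Z))\big)$ with $F(t)=\int_c^t f$, $Z=F(d)$, and that this is jointly continuous in $(f,c,d,v)$ for the topology of local uniform convergence of $f$; combined with the fact, from the ODE characterisation in Remark~\ref{rem:J-char}, that $J_{H,\rho,m}$ together with the endpoints of its support depends continuously on $(H,\rho,m)$ (including through $m\to0^+$, $m\to\infty$ and $\rho\to0$), this makes each $(\rho,m,D,v)\mapsto\J(J_{H,\rho,m},[-a,D-a])(v)$ continuous, hence $V$ --- a pointwise infimum of such functions --- upper semi-continuous.

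For lower semi-continuity I would fix parameters tending to a limit, use Corollary~\ref{cor:inf-attained} (and, to cut the infimum down to a single parameter, the reduction of Corollary~\ref{cor:inf2=inf1}) to pick minimisers $(H,a)$ --- or minimising shift parameters $\xi$, after the reparametrisation used in the proof of Corollary~\ref{cor:main1} --- for each parameter in the sequence, and attempt to extract a convergent subsequence of these minimisers; when $D<\infty$ the $a$-variable ranges over the bounded interval $[0,D]$, so the only way a minimising sequence can fail to converge is by $|H|\to\infty$, equivalently by the shift $\xi$ running off to $\pm\infty$. If the minimisers stay bounded, the pointwise continuity above lets me pass to the limit and conclude that $V$ at the limit is $\ge\liminf$. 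The hard part will therefore be exactly this escape control: showing that either the optimal shift cannot run off to infinity, or that if it does the infimum is still attained in the limit in a way that does not drop the value.

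To handle the escape I would argue regime by regime using the explicit descriptions in Corollary~\ref{cor:main1}. In the regimes where $V$ is already an infimum over a \emph{compact} shift set that varies continuously with the parameters --- Cases~1 and 2 ($\rho>0$, $q<\infty$) and Case~6 ($\rho>0$, $q=D=\infty$), together with the automatic compactness of $a\in[0,D]$ when $D<\infty$ --- there is nothing to escape and continuity is the standard continuity of a minimum over a continuously varying compact set. In the genuinely non-compact regimes --- $\rho\le0$ with $q<\infty$ (Cases~3, 4) and $q=\infty$ (Cases~5, 7) --- I would invoke the limiting identities already established in Section~\ref{sec:model}: as the shift escapes, the competitor density degenerates to another explicit model density occurring in the same infimum (e.g. $\lim_{\xi\to\infty}\J(t^{m},[\xi,\xi+D])=\J(1,[0,D])$, and its hyperbolic and exponential analogues), so the infimum over the non-compact index set is again continuous; in Cases~3 and 7 this is transparent from the closed forms in Corollary~\ref{cor:main1}, elementary one-variable functions with visibly continuous (and attained) infima. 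Continuity \emph{across} the regimes, at the thresholds $\rho=0$, $D=\pi/\sqrt\delta$ and $m=\infty$, follows from the matching limits of Remark~\ref{rem:J-char} and direct computation, and the second domain of the lemma ($D\equiv\infty$, forcing $\rho>0$) is handled identically with the $a$-variable absent.

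Finally, it is worth recording a structural shortcut. The function $V$ is monotone in each of $\rho$ (non-decreasing), $m$ and $D$ (non-increasing): this follows from the implications among the $CDD$ conditions noted at the start of Section~\ref{sec:model} together with the sharpness of Theorem~\ref{thm:main2} (using Corollary~\ref{cor:main1-1D} and the elementary one-dimensional sharpness when $m<1$), which identify $V(\rho,m,D,v)$ with the infimal $\mu^+(A)$ over all manifolds-with-density of generalised dimension $m+1$ satisfying $CDD(\rho,m+1,D)$. Since a function on a product of intervals that is separately continuous in all of its variables and monotone in all but one of them is automatically jointly continuous --- peel off a monotone variable $x_1$, sandwich $V(x_1,\vec x)$ between $V(x_1^\pm,\vec x)$ for $x_1$ near $\bar x_1$, and use the inductively known joint continuity of $V(x_1^\pm,\cdot)$ in the remaining variables --- this reduces the problem to proving continuity in \emph{one} parameter at a time, which is where the escape analysis above is actually carried out and where the bookkeeping is lightest.
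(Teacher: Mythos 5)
Your overall strategy — continuity of the building block $\J(J_{H,\rho,m},[-a,D-a])(v)$ plus control on escape of the minimising parameter — is the same skeleton the paper uses, but as written the argument has a circularity and a couple of misidentifications that make the escape control fall apart.

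\textbf{Circularity.} You invoke Corollary~\ref{cor:inf-attained} to pick exact minimisers, but in the paper that corollary is deduced \emph{from} the proof of Lemma~\ref{lem:profile-continuity}, precisely via the compactness that the escape estimate furnishes. As stated your argument is circular. This is easily patched (take $\eps$-minimisers instead of exact ones), but the patch is where the real work lives, and you haven't done it: you would still have to prove, directly and uniformly in the nearby parameters, that the $\eps$-minimising $H$'s stay in a bounded set.

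\textbf{The escape identification is wrong.} You assert that $|H|\to\infty$ is ``equivalently'' the shift $\xi$ running off to $\pm\infty$. These are different degenerations. In Case~3, for example, $\xi=m/H-a$ with $a\in[0,D]$, so $\xi\to+\infty$ corresponds to $H\to0^+$, while $|H|\to\infty$ gives $\xi$ bounded (tending to $-a$). More importantly, the consequence you draw from your identification — that as the minimiser escapes the competitor density always ``degenerates to another explicit model density occurring in the same infimum'' — is simply false in some of your regimes. In Cases~5 and~7 (and in Cases~3 and~4 if one lets $|H|\to\infty$ rather than $\xi\to\infty$), the boundary measure \emph{blows up} to $+\infty$ as the parameter escapes; nothing finite is recovered. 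The correct and uniform statement, which is what the paper establishes, is that with $a_H$ pinned down by the mass-balance constraint~(\ref{eq:a_H}), the normalising integral $\int_{-a_H}^{D-a_H}J_{H,\rho,m}$ tends to $0$ as $|H|\to\infty$, with a rate controlled uniformly as $(\rho,m,D,v)$ ranges over compacta (via the bound $J_{H,\rho,m}\le J_{H,\rho,\infty}$ from Lemma~\ref{lem:J-monotone}). That single estimate shows the infimum is effectively over a compact $H$-set depending only on the compactum, after which continuity of the infimum of a jointly continuous function over a compact set is standard. You never obtain this uniform estimate; the regime-by-regime degeneration picture you propose neither supplies it nor is correct across the cases.

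\textbf{What the case split costs you.} Even granting the escape control, the case-by-case route forces you to separately verify continuity across each regime boundary ($\rho=0$, $D=\pi/\sqrt{\delta}$, $m=\infty$), which you acknowledge only with a wave at Remark~\ref{rem:J-char} and ``direct computation.'' The paper's proof sidesteps all of this by never leaving the unified $(H,a)$ parametrisation of Theorem~\ref{thm:main1}; there are no thresholds to glue.

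\textbf{On the monotonicity shortcut.} The separate-continuity-plus-monotonicity-implies-joint-continuity lemma is correct, and the monotonicity of $V$ in $\rho$, $m$, $D$ does hold — though you can get it from Lemma~\ref{lem:J-monotone} without invoking the full sharpness Theorem~\ref{thm:main2} (which, while not circular here, is a disproportionately heavy input for an appendix lemma). More to the point, the shortcut does not avoid the escape analysis: proving continuity in any single parameter, with the others frozen, still requires showing the minimising $H$ doesn't escape uniformly in a neighbourhood, which is exactly the estimate you are missing.

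So: same high-level plan, but the central quantitative step — the uniform escape estimate — is absent and is replaced by claims that are either circular or incorrect.
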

\begin{proof}
Set $F_{H,\rho,m,a}(t) = \int_{-a}^t J_{H,\rho,m}(s) ds$ and $I_{H,\rho,m,a} = J_{H,\rho,m} \circ F_{H,\rho,m,a}^{-1}$, and note that by definition:
\[
\J(J_{H,\rho,m},[-a,D-a])(v) = \min\brac{\frac{I_{H,\rho,m,a}(v F_{H,\rho,m,a}(D-a))}{F_{H,\rho,m,a}(D-a)} ,\frac{I_{H,\rho,m,a}((1-v) F_{H,\rho,m,a}(D-a))}{F_{H,\rho,m,a}(D-a)}} ~.
\]
Since $J_{H,\rho,m}(s)$ can only vanish outside some interval, it follows that $F_{H,\rho,m,a}^{-1}$ is continuous on $[0,F_{H,\rho,m,a}(+\infty)]$, where $F_{H,\rho,m,a}(+\infty) = \lim_{t \rightarrow +\infty} F_{H,\rho,m,a}(t)$, and we take the inverse at the end-points using the natural convention.
Consequently, all the functions above are continuous in their respective parameters, and so the function:
\[
(\rho,m,D,v,H,a) \mapsto \J\brac{ J_{H,\rho,m}, [-a,D-a] }(v)
\]
is continuous on the corresponding domain.

Now assume that $D < \infty$ and $v \in (0,1)$, and recall that by Corollary \ref{cor:inf2=inf1}, we know that:
\[
\inf_{H \in \Real, a \in [0,D]} \J(J_{H,\rho,m},[-a,D-a])(v) = \inf_{H \in \Real} \J(J_{H,\rho,m},[-a_H,D-a_H])(v) = \inf_{H \in \Real} \frac{1}{\int_{-a_H}^{D-a_H} J_{H,\rho,m}(t) dt} ~,
\]
where $a_H$ satisfies:
\[
\frac{v}{1-v} = \frac{\int_{-a_H}^0 J_{H,\rho,m}(t) dt}{\int_{0}^{D-a_H} J_{H,\rho,m}(t) dt} ~.
\]
Using e.g. that $J_{H,\rho,m} \leq J_{H,\rho,\infty}$ according to Lemma \ref{lem:J-monotone}, we estimate:
\begin{eqnarray*}
&  &
\lim_{H \rightarrow \infty} \int_{-a_H}^{D-a_H} J_{H,\rho,m}(t) dt =
\lim_{H \rightarrow \infty} \frac{1}{v} \int_{-a_H}^{0} J_{H,\rho,m}(t) dt \\
&\leq &
\lim_{H \rightarrow \infty} \frac{1}{v} \int_{-a_H}^{0} \exp(H t - \frac{\rho}{2} t^2) dt
\leq \lim_{H \rightarrow \infty} \frac{1}{v} \exp(-\frac{\min(\rho,0)}{2} D^2) \frac{1}{H}  = 0 ~.
\end{eqnarray*}
Similarly, we see that $\lim_{H \rightarrow -\infty} \int_{-a_H}^{D-a_H} J_{H,\rho,m}(t) dt = 0$. Moreover, note that in either case, the rate of convergence to $0$ is uniform in $m \in [0,\infty]$ and in $\rho,D,v$, as long as $\rho \in \Real$ is bounded below by $\rho_0$, $D\in (0,\infty)$ is bounded above by $D_0$, and $\min(v,1-v) \geq v_0 > 0$. It follows that:
\[
\inf_{H \in \Real} \frac{1}{\int_{-a_H}^{D-a_H} J_{H,\rho,m}(t) dt} = \inf_{H \in K_{\rho_0,D_0,v_0}} \frac{1}{\int_{-a_H}^{D-a_H} J_{H,\rho,m}(t) dt} ~,
\]
where $K = K_{\rho_0,D_0,v_0} \subset \Real$ is some compact interval depending solely on its parameters. Consequently:
\begin{eqnarray*}
&      & \inf_{H \in \Real, a \in [0,D]} \J(J_{H,\rho,m},[-a,D-a])(v) = \inf_{H \in K} \J(J_{H,\rho,m},[-a_H,D-a_H])(v) \\
& \geq & \inf_{H \in K, a \in [0,D]} \J(J_{H,\rho,m},[-a,D-a])(v) \geq \inf_{H \in \Real, a \in [0,D]} \J(J_{H,\rho,m},[-a,D-a])(v) ~,
\end{eqnarray*}
and so we conclude we must have equality signs everywhere. It follows that it is enough to test the infimum in (\ref{eq:profile-continuity}) on the compact set $K_{\rho_0,D_0,v_0} \times [0,D]$. By compactness, the function $(\rho,m,D,v,H,a) \mapsto  \J\brac{ J_{H,\rho,m}, [-a,D-a] }(v)$ is continuous, uniformly in $(H,a) \in K_{\rho_0,D_0,v_0} \times [0,D]$, and so the infimum over this set is continuous in $(\rho,m,D,v)$ in the corresponding domain, as asserted. The case when we fix $D=\infty$ is treated similarly.
\end{proof}

An immediate corollary of the above proof is that (the cases $v \in \set{0,1}$, $m=0$, $D = \infty$ and $\rho \leq 0$ below hold trivially):
\begin{cor} \label{cor:inf-attained}
For any $\rho \in \Real$, $m \in [0,\infty]$, $D \in (0,\infty]$ and $v \in [0,1]$, the infimum in the lower bound given by Theorem \ref{thm:main1} is attained:
\[
\inf_{H \in \Real, a \in [D-D,D]} \J\brac{ J_{H,\rho,m}, [-a,D-a] }(v) = \min_{H \in \Real, a \in [D-D,D]} \J\brac{ J_{H,\rho,m}, [-a,D-a] }(v) ~.
\]
\end{cor}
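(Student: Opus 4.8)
The plan is to read off the statement from the compactness that is already implicitly established inside the proof of Lemma~\ref{lem:profile-continuity}. First I would clear away the degenerate cases flagged in the statement. When $v \in \set{0,1}$ the quantity $\J(J_{H,\rho,m},[-a,D-a])(v)$ is identically $0$, so there is nothing to prove. When $m = 0$ the density $J_{H,\rho,m}$ is $1_{\set{t=0}}$ if $\rho > 0$ (whence $\J \equiv +\infty$) and $1_{\set{Ht \ge 0}}$ if $\rho \le 0$; in the latter case $\J(J_{H,\rho,0},[-a,D-a])(v)$ is simply the profile of a uniform measure on one of the two subintervals $[-a,0]$, $[0,D-a]$ of $[-a,D-a]$, an explicit continuous function of $a$ on the compact interval $[0,D]$ (and identically $0$ if $D = \infty$), so a minimizer exists by inspection. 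When $D = \infty$: if $\rho > 0$ the proof of Theorem~\ref{thm:main1} already produces the unique optimal parameter $H_v$, and if $\rho \le 0$ the bound is $0$; either way the infimum is attained.

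This leaves the main case $D < \infty$, $v \in (0,1)$, $m \in (0,\infty]$, where I would simply invoke the intermediate steps of the proof of Lemma~\ref{lem:profile-continuity}, in the following order. Step one: by Corollary~\ref{cor:inf2=inf1} rewrite the infimum over $(H,a) \in \Real \times [0,D]$ as $\inf_{H \in \Real} 1 / \int_{-a_H}^{D-a_H} J_{H,\rho,m}(t)\,dt$, with $a_H$ the balancing abscissa of (\ref{eq:a_H}). Step two: using $J_{H,\rho,m}(t) \le J_{H,\rho,\infty}(t) = \exp(Ht - \tfrac{\rho}{2} t^2)$ from Lemma~\ref{lem:J-monotone}, show (exactly as in that proof) that $\int_{-a_H}^{D-a_H} J_{H,\rho,m}(t)\,dt \to 0$, and hence $\J(J_{H,\rho,m},[-a_H,D-a_H])(v) \to +\infty$, as $H \to \pm\infty$; conclude that $H$ may be confined to a compact interval $K = K_{\rho,D,v}$ without changing the infimum. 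Step three: use the chain of (in)equalities established there to identify the resulting quantity with $\inf_{(H,a) \in K \times [0,D]} \J(J_{H,\rho,m},[-a,D-a])(v)$.

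Step four, the conclusion: the map $(H,a) \mapsto \J(J_{H,\rho,m},[-a,D-a])(v)$ is continuous on $\Real \times \set{a < D}$ (again shown in the proof of Lemma~\ref{lem:profile-continuity}), so its restriction to the compact set $K \times [0,D]$ attains its minimum, which exhibits an optimal pair $(H,a)$ and finishes the proof. I do not expect any real obstacle: every nontrivial ingredient --- the $a$-balancing reduction, the blow-up estimate at $H = \pm\infty$, the joint continuity of the profile --- has already been established for Lemma~\ref{lem:profile-continuity}. The one point worth a sentence of care is to observe that for $D < \infty$ and $a \in [0,D]$ the interval $[-a,D-a]$ always contains the origin, where $J_{H,\rho,m}$ equals $1$; thus the total mass $\int_{-a}^{D-a} J_{H,\rho,m}$ is bounded away from $0$ on $K \times [0,D]$, the profile stays finite there, and the appeal to continuity and compactness is legitimate.
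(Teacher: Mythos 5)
Your proposal is correct and follows essentially the same route as the paper, which obtains Corollary~\ref{cor:inf-attained} as an immediate by-product of the compactness and joint-continuity steps established in the proof of Lemma~\ref{lem:profile-continuity}, after disposing of the trivial edge cases $v\in\set{0,1}$, $m=0$, and $D=\infty$ with $\rho\le 0$. The one small imprecision is in the case $D=\infty$, $\rho>0$: the parameter $H_v$ produced in the proof of Theorem~\ref{thm:main1} minimizes $\inf_H\mu^+_{H,v}([-a_{H,v},0])$, not a priori $\inf_H\J(J_{H},\Real)(v)$, so citing it does not by itself establish attainment there; but this sub-case is already covered by the same confine-$H$-to-a-compact-$K$ argument that the proof of Lemma~\ref{lem:profile-continuity} carries out for $D=\infty$, so no real gap results.
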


\setlinespacing{0.88}
\setlength{\bibspacing}{2pt}

\bibliographystyle{plain}
\bibliography{../../ConvexBib}

\def\cprime{$'$}
\begin{thebibliography}{10}

\bibitem{AlmgrenMemoirs}
F.~J. Almgren, Jr.
\newblock Existence and regularity almost everywhere of solutions to elliptic
  variational problems with constraints.
\newblock {\em Mem. Amer. Math. Soc.}, 4(165), 1976.

\bibitem{BakryStFlour}
D.~Bakry.
\newblock L'hypercontractivit\'e et son utilisation en th\'eorie des
  semigroupes.
\newblock In {\em Lectures on probability theory ({S}aint-{F}lour, 1992)},
  volume 1581 of {\em Lecture Notes in Math.}, pages 1--114. Springer, Berlin,
  1994.

\bibitem{BakryEmery}
D.~Bakry and M.~{\'E}mery.
\newblock Diffusions hypercontractives.
\newblock In {\em S\'eminaire de probabilit\'es, XIX, 1983/84}, volume 1123 of
  {\em Lecture Notes in Math.}, pages 177--206. Springer, Berlin, 1985.

\bibitem{BakryLedoux}
D.~Bakry and M.~Ledoux.
\newblock L\'evy-{G}romov's isoperimetric inequality for an
  infinite-dimensional diffusion generator.
\newblock {\em Invent. Math.}, 123(2):259--281, 1996.

\bibitem{BakryLedouxQianUnpublished}
D.~Bakry, M.~Ledoux, and Z.~Qian.
\newblock Logarithmic {S}obolev inequalities, {P}oincar\'e inequalities and
  heat kernel bounds.
\newblock Unpublished manuscript, 1997.

\bibitem{BakryQianSharpSpectralGapEstimatesUnderCDD}
D.~Bakry and Z.~Qian.
\newblock Some new results on eigenvectors via dimension, diameter, and {R}icci
  curvature.
\newblock {\em Adv. Math.}, 155(1):98--153, 2000.

\bibitem{BakryQianGenRicComparisonThms}
D.~Bakry and Z.~Qian.
\newblock Volume comparison theorems without {J}acobi fields.
\newblock In {\em Current trends in potential theory}, volume~4 of {\em Theta
  Ser. Adv. Math.}, pages 115--122. Theta, Bucharest, 2005.

\bibitem{BartheIntegrabilityImpliesIsoperimetryLikeBobkov}
F.~Barthe.
\newblock Levels of concentration between exponential and {G}aussian.
\newblock {\em Ann. Fac. Sci. Toulouse Math. (6)}, 10(3):393--404, 2001.

\bibitem{BartheKolesnikov}
F.~Barthe and A.~V. Kolesnikov.
\newblock Mass transport and variants of the logarithmic {S}obolev inequality.
\newblock {\em J. Geom. Anal.}, 18(4):921--979, 2008.

\bibitem{BayleThesis}
V.~Bayle.
\newblock {\em Propri\'et\'es de concavit\'e du profil isop\'erim\'etrique et
  applications}.
\newblock PhD thesis, Institut Joseph Fourier, Grenoble, 2004.

\bibitem{BayleRosales}
V.~Bayle and C.~Rosales.
\newblock Some isoperimetric comparison theorems for convex bodies in
  {R}iemannian manifolds.
\newblock {\em Indiana Univ. Math. J.}, 54(5):1371--1394, 2005.

\bibitem{BBGimprovingGromov}
P.~B{\'e}rard, G.~Besson, and S.~Gallot.
\newblock Sur une in\'egalit\'e isop\'erim\'etrique qui g\'en\'eralise celle de
  {P}aul {L}\'evy-{G}romov.
\newblock {\em Invent. Math.}, 80(2):295--308, 1985.

\bibitem{BishopInBayleRosales}
R.~L. Bishop.
\newblock Infinitesimal convexity implies local convexity.
\newblock {\em Indiana Univ. Math. J.}, 24:169--172, 1974/75.

\bibitem{BobkovExtremalHalfSpaces}
S.~Bobkov.
\newblock Extremal properties of half-spaces for log-concave distributions.
\newblock {\em Ann. Probab.}, 24(1):35--48, 1996.

\bibitem{BobkovGaussianIsopInqViaCube}
S.~G. Bobkov.
\newblock An isoperimetric inequality on the discrete cube, and an elementary
  proof of the isoperimetric inequality in {G}auss space.
\newblock {\em Ann. Probab.}, 25(1):206--214, 1997.

\bibitem{BobkovGaussianIsoLogSobEquivalent}
S.~G. Bobkov.
\newblock Isoperimetric and analytic inequalities for log-concave probability
  measures.
\newblock {\em Ann. Probab.}, 27(4):1903--1921, 1999.

\bibitem{BobkovLocalizedProofOfGaussianIso}
S.~G. Bobkov.
\newblock A localized proof of the isoperimetric {B}akry-{L}edoux inequality
  and some applications.
\newblock {\em Teor. Veroyatnost. i Primenen.}, 47(2):340--346, 2002.

\bibitem{BobkovKappaConcaveMeasures}
S.~G. Bobkov.
\newblock Large deviations and isoperimetry over convex probability measures
  with heavy tails.
\newblock {\em Electron. J. Probab.}, 12:1072--1100 (electronic), 2007.

\bibitem{BombieriRegularityTheory}
E.~Bombieri.
\newblock Regularity theory for almost minimal currents.
\newblock {\em Arch. Rational Mech. Anal.}, 78(2):99--130, 1982.

\bibitem{BombieriDeGiorgiGiusti}
E.~Bombieri, E.~De~Giorgi, and E.~Giusti.
\newblock Minimal cones and the {B}ernstein problem.
\newblock {\em Invent. Math.}, 7:243--268, 1969.

\bibitem{Borell-GaussianIsoperimetry}
Ch. Borell.
\newblock The {B}runn--{M}inkowski inequality in {G}auss spaces.
\newblock {\em Invent. Math.}, 30:207--216, 1975.

\bibitem{BuragoZalgallerBook}
Yu.~D. Burago and V.~A. Zalgaller.
\newblock {\em Geometric inequalities}, volume 285 of {\em Grundlehren der
  Mathematischen Wissenschaften [Fundamental Principles of Mathematical
  Sciences]}.
\newblock Springer-Verlag, Berlin, 1988.

\bibitem{BuserReverseCheeger}
P.~Buser.
\newblock A note on the isoperimetric constant.
\newblock {\em Ann. Sci. \'Ecole Norm. Sup. (4)}, 15(2):213--230, 1982.

\bibitem{Chavel-IsopBook}
I.~Chavel.
\newblock {\em Isoperimetric inequalities}, volume 145 of {\em Cambridge Tracts
  in Mathematics}.
\newblock Cambridge University Press, Cambridge, 2001.
\newblock Differential geometric and analytic perspectives.

\bibitem{CheegerInq}
J.~Cheeger.
\newblock A lower bound for the smallest eigenvalue of the {L}aplacian.
\newblock In {\em Problems in analysis (Papers dedicated to Salomon Bochner,
  1969)}, pages 195--199. Princeton Univ. Press, Princeton, N. J., 1970.

\bibitem{CrokeIsoperimetricBounds}
C.~B. Croke.
\newblock Some isoperimetric inequalities and eigenvalue estimates.
\newblock {\em Ann. Sci. \'Ecole Norm. Sup. (4)}, 13(4):419--435, 1980.

\bibitem{CrokeEigenvaluePinching}
C.~B. Croke.
\newblock An eigenvalue pinching theorem.
\newblock {\em Invent. Math.}, 68(2):253--256, 1982.

\bibitem{DeGiorgiFirstRegularity}
E.~De~Giorgi.
\newblock {\em Frontiere orientate di misura minima}.
\newblock Seminario di Matematica della Scuola Normale Superiore di Pisa,
  1960-61. Editrice Tecnico Scientifica, Pisa, 1961.

\bibitem{FedererBook}
H.~Federer.
\newblock {\em Geometric measure theory}.
\newblock Die Grundlehren der mathematischen Wissenschaften, Band 153.
  Springer-Verlag New York Inc., New York, 1969.

\bibitem{FedererRegularity}
H.~Federer.
\newblock The singular sets of area minimizing rectifiable currents with
  codimension one and of area minimizing flat chains modulo two with arbitrary
  codimension.
\newblock {\em Bull. Amer. Math. Soc.}, 76:767--771, 1970.

\bibitem{FedererFleming}
H.~Federer and W.~H. Fleming.
\newblock Normal and integral currents.
\newblock {\em Ann. of Math. (2)}, 72:458--520, 1960.

\bibitem{GallotIsoperimetricInqs}
S.~Gallot.
\newblock In\'egalit\'es isop\'erim\'etriques et analytiques sur les
  vari\'et\'es riemanniennes.
\newblock {\em Ast\'erisque}, (163-164):31--91, 1988.
\newblock On the geometry of differentiable manifolds (Rome, 1986).

\bibitem{GHLBookEdition3}
S.~Gallot, D.~Hulin, and J.~Lafontaine.
\newblock {\em Riemannian geometry}.
\newblock Universitext. Springer-Verlag, Berlin, third edition, 2004.

\bibitem{GiustiBook}
E.~Giusti.
\newblock {\em Minimal surfaces and functions of bounded variation}, volume~80
  of {\em Monographs in Mathematics}.
\newblock Birkh\"auser Verlag, Basel, 1984.

\bibitem{GMT}
E.~Gonzalez, U.~Massari, and I.~Tamanini.
\newblock On the regularity of boundaries of sets minimizing perimeter with a
  volume constraint.
\newblock {\em Indiana Univ. Math. J.}, 32(1):25--37, 1983.

\bibitem{GromovGeneralizationOfLevy}
M.~Gromov.
\newblock {P}aul {L}\'evy's isoperimetric inequality.
\newblock preprint, I.H.E.S., 1980.

\bibitem{Gromov}
M.~Gromov.
\newblock {\em Metric structures for {R}iemannian and non-{R}iemannian spaces},
  volume 152 of {\em Progress in Mathematics}.
\newblock Birkh\"auser Boston Inc., Boston, MA, 1999.

\bibitem{Gromov-Milman}
M.~Gromov and V.~D. Milman.
\newblock Generalization of the spherical isoperimetric inequality to uniformly
  convex {B}anach spaces.
\newblock {\em Compositio Math.}, 62(3):263--282, 1987.

\bibitem{Gruter}
M.~Gr{\"u}ter.
\newblock Boundary regularity for solutions of a partitioning problem.
\newblock {\em Arch. Rational Mech. Anal.}, 97(3):261--270, 1987.

\bibitem{Gruter-OptimalRegularityForCodimensionOneMinimalSurfacesWithAFreeBoundary}
M.~Gr{\"u}ter.
\newblock Optimal regularity for codimension one minimal surfaces with a free
  boundary.
\newblock {\em Manuscripta Math.}, 58(3):295--343, 1987.

\bibitem{GruterBoundaryOfArbitraryCoDimension}
M.~Gr{\"u}ter.
\newblock Regularity results for minimizing currents with a free boundary.
\newblock {\em J. Reine Angew. Math.}, 375/376:307--325, 1987.

\bibitem{GruterJostRegularityofAllardTypeOnFreeBoundary}
M.~Gr{\"u}ter and J.~Jost.
\newblock Allard type regularity results for varifolds with free boundaries.
\newblock {\em Ann. Scuola Norm. Sup. Pisa Cl. Sci. (4)}, 13(1):129--169, 1986.

\bibitem{HeintzeKarcher}
E.~Heintze and H.~Karcher.
\newblock A general comparison theorem with applications to volume estimates
  for submanifolds.
\newblock {\em Ann. Sci. \'Ecole Norm. Sup. (4)}, 11(4):451--470, 1978.

\bibitem{KLS}
R.~Kannan, L.~Lov{\'a}sz, and M.~Simonovits.
\newblock Isoperimetric problems for convex bodies and a localization lemma.
\newblock {\em Discrete Comput. Geom.}, 13(3-4):541--559, 1995.

\bibitem{LedouxLectureNotesOnDiffusion}
M.~Ledoux.
\newblock The geometry of {M}arkov diffusion generators.
\newblock {\em Ann. Fac. Sci. Toulouse Math. (6)}, 9(2):305--366, 2000.

\bibitem{Ledoux-Book}
M.~Ledoux.
\newblock {\em The concentration of measure phenomenon}, volume~89 of {\em
  Mathematical Surveys and Monographs}.
\newblock American Mathematical Society, Providence, RI, 2001.

\bibitem{LedouxSpectralGapAndGeometry}
M.~Ledoux.
\newblock Spectral gap, logarithmic {S}obolev constant, and geometric bounds.
\newblock In {\em Surveys in differential geometry. Vol. IX}, pages 219--240.
  Int. Press, Somerville, MA, 2004.

\bibitem{LevyIsopInqOnSphere}
P.~L\'evy.
\newblock {\em Leons d'analyse fonctionelle}.
\newblock Gauthier-Villars, Paris, 1922.

\bibitem{LiYauEigenvalues}
P.~Li and S.T. Yau.
\newblock Estimates of eigenvalues of a compact {R}iemannian manifold.
\newblock In {\em Geometry of the Laplace operator (Proc. Sympos. Pure Math.,
  Univ. Hawaii, Honolulu, Hawaii, 1979)}, Proc. Sympos. Pure Math., XXXVI,
  pages 205--239. Amer. Math. Soc., Providence, R.I., 1980.

\bibitem{Lichnerowicz1970GenRicciTensorCRAS}
A.~Lichnerowicz.
\newblock Vari\'et\'es riemanniennes \`a tenseur {C} non n\'egatif.
\newblock {\em C. R. Acad. Sci. Paris S\'er. A-B}, 271:A650--A653, 1970.

\bibitem{Lichnerowicz1970GenRicciTensor}
A.~Lichnerowicz.
\newblock Vari\'et\'es k\"ahl\'eriennes \`a premi\`ere classe de {C}hern non
  negative et vari\'et\'es riemanniennes \`a courbure de {R}icci
  g\'en\'eralis\'ee non negative.
\newblock {\em J. Differential Geometry}, 6:47--94, 1971/72.

\bibitem{LottRicciTensorProperties}
J.~Lott.
\newblock Some geometric properties of the {B}akry-\'{E}mery-{R}icci tensor.
\newblock {\em Comment. Math. Helv.}, 78(4):865--883, 2003.

\bibitem{LottVillaniGeneralizedRicci}
J.~Lott and C.~Villani.
\newblock Ricci curvature for metric-measure spaces via optimal transport.
\newblock {\em Ann. of Math. (2)}, 169(3):903--991, 2009.

\bibitem{MazyaSobolevImbedding}
V.~G. Maz{\cprime}ja.
\newblock Classes of domains and imbedding theorems for function spaces.
\newblock {\em Dokl. Acad. Nauk SSSR}, 3:527--530, 1960.
\newblock Engl. transl. Soviet Math. Dokl., 1 (1961) 882--885.

\bibitem{MazyaCheegersInq1}
V.~G. Maz{\cprime}ja.
\newblock The negative spectrum of the higher-dimensional {S}chr\"odinger
  operator.
\newblock {\em Dokl. Akad. Nauk SSSR}, 144:721--722, 1962.
\newblock Engl. transl. Soviet Math. Dokl., 3 (1962) 808--810.

\bibitem{EMilmanRoleOfConvexityInFunctionalInqs}
E.~Milman.
\newblock On the role of convexity in functional and isoperimetric
  inequalities.
\newblock {\em Proc. London Math. Soc.}, 99(3):32--66, 2009.

\bibitem{EMilman-RoleOfConvexity}
E.~Milman.
\newblock On the role of convexity in isoperimetry, spectral-gap and
  concentration.
\newblock {\em Invent. Math.}, 177(1):1--43, 2009.

\bibitem{EMilmanGeometricApproachPartI}
E.~Milman.
\newblock Isoperimetric and concentration inequalities - equivalence under
  curvature lower bound.
\newblock {\em Duke Math. J.}, 154(2):207--239, 2010.

\bibitem{EMilmanIsoperimetricBoundsOnManifolds}
E.~Milman.
\newblock Isoperimetric bounds on convex manifolds.
\newblock In C.~Houdr\'e, M.~Ledoux, E.~Milman, and M.~Milman, editors, {\em
  Concentration, Functional Inequalities and Isoperimetry}, volume 545 of {\em
  Contemporary Mathematics}, pages 195--208. Amer. Math. Soc., 2011.

\bibitem{MorganRegularityOfMinimizers}
F.~Morgan.
\newblock Regularity of isoperimetric hypersurfaces in {R}iemannian manifolds.
\newblock {\em Trans. Amer. Math. Soc.}, 355(12):5041--5052 (electronic), 2003.

\bibitem{MorganManifoldsWithDensity}
F.~Morgan.
\newblock Manifolds with density.
\newblock {\em Notices Amer. Math. Soc.}, 52(8):853--858, 2005.

\bibitem{MorganBook4Ed}
F.~Morgan.
\newblock {\em Geometric measure theory (a beginner's guide)}.
\newblock Elsevier/Academic Press, Amsterdam, fourth edition, 2009.

\bibitem{Papadopoulos-MetricSpacesConvexityAndCurvatureBook}
A.~Papadopoulos.
\newblock {\em Metric spaces, convexity and nonpositive curvature}, volume~6 of
  {\em IRMA Lectures in Mathematics and Theoretical Physics}.
\newblock European Mathematical Society (EMS), Z\"urich, 2005.

\bibitem{PayneWeinberger}
L.~E. Payne and H.~F. Weinberger.
\newblock An optimal {P}oincar\'e inequality for convex domains.
\newblock {\em Arch. Rational Mech. Anal.}, 5:286--292, 1960.

\bibitem{PerelmanEntropyFormulaForRicciFlow}
G.~Perelman.
\newblock The entropy formula for the {R}icci flow and its geometric
  applications.
\newblock arxiv.org/abs/math/0211159, 2002.

\bibitem{PetersenBook2ndEd}
P.~Petersen.
\newblock {\em Riemannian geometry}, volume 171 of {\em Graduate Texts in
  Mathematics}.
\newblock Springer, New York, second edition, 2006.

\bibitem{QianWeightedVolumeThms}
Z.~Qian.
\newblock Estimates for weighted volumes and applications.
\newblock {\em Quart. J. Math. Oxford Ser. (2)}, 48(190):235--242, 1997.

\bibitem{RitoreRosalesMinimizersInCones}
M.~Ritor{\'e} and C.~Rosales.
\newblock Existence and characterization of regions minimizing perimeter under
  a volume constraint inside {E}uclidean cones.
\newblock {\em Trans. Amer. Math. Soc.}, 356(11):4601--4622 (electronic), 2004.

\bibitem{SchmidtIsopOnModelSpaces}
E.~Schmidt.
\newblock Die {B}runn-{M}inkowskische {U}ngleichung und ihr {S}piegelbild sowie
  die isoperimetrische {E}igenschaft der {K}ugel in der euklidischen und
  nichteuklidischen {G}eometrie. {I}.
\newblock {\em Math. Nachr.}, 1:81--157, 1948.

\bibitem{SturmCD12}
K.-T. Sturm.
\newblock On the geometry of metric measure spaces. {I} and {II}.
\newblock {\em Acta Math.}, 196(1):65--177, 2006.

\bibitem{SudakovTsirelson}
V.~N. Sudakov and B.~S. Cirel{\cprime}son~[Tsirelson].
\newblock Extremal properties of half-spaces for spherically invariant
  measures.
\newblock {\em Zap. Nau\v cn. Sem. Leningrad. Otdel. Mat. Inst. Steklov.
  (LOMI)}, 41:14--24, 165, 1974.
\newblock Problems in the theory of probability distributions, II.

\bibitem{TamaniniExtendedExcessFuncAndHolderRegularity}
I.~Tamanini.
\newblock Boundaries of {C}accioppoli sets with {H}\"older-continuous normal
  vector.
\newblock {\em J. Reine Angew. Math.}, 334:27--39, 1982.

\bibitem{VonRenesseSturmRicciChar}
M.-K. von Renesse and K.-T. Sturm.
\newblock Transport inequalities, gradient estimates, entropy, and {R}icci
  curvature.
\newblock {\em Comm. Pure Appl. Math.}, 58(7):923--940, 2005.

\bibitem{WangIntegrabilityForLogSob}
F.-Y. Wang.
\newblock Logarithmic {S}obolev inequalities on noncompact {R}iemannian
  manifolds.
\newblock {\em Probab. Theory Related Fields}, 109(3):417--424, 1997.

\bibitem{WangImprovedIntegrabilityForLogSob}
F.-Y. Wang.
\newblock Logarithmic {S}obolev inequalities: conditions and counterexamples.
\newblock {\em J. Operator Theory}, 46(1):183--197, 2001.

\bibitem{WeiWylie-GenRicciTensor}
G.~Wei and W.~Wylie.
\newblock Comparison geometry for the {B}akry-{E}mery {R}icci tensor.
\newblock {\em J. Differential Geom.}, 83(2):377--405, 2009.

\bibitem{ZhongYangImprovingLiYau}
J.~Q. Zhong and H.~C. Yang.
\newblock On the estimate of the first eigenvalue of a compact {R}iemannian
  manifold.
\newblock {\em Sci. Sinica Ser. A}, 27(12):1265--1273, 1984.

\end{thebibliography}

\end{document}